\newtheoremstyle{BBstyle0}  {}{}{\itshape}{}{\bfseries}{}{6pt}{}
\newtheoremstyle{BBstyle1}  {3pt}{3pt}{\rmfamily}{}{\itshape}{: }{3pt}{}
\newtheoremstyle{BBstyle2}  {3pt}{3pt}{\itshape}{}{\bfseries\large}{}{0pt}{}
\newtheoremstyle{BBstyle3}  {}{}{\itshape}{}{\bfseries}{: }{3pt}{}
\newtheoremstyle{BBstyle4}  {}{}{\rmfamily}{}{\bfseries}{}{6pt}{}
\newtheorem{thm}{Theorem}
\newtheorem{lemma}{Lemma}
\newtheorem{prop}{Proposition}
\newtheorem{df}{Definition}
\newtheorem{cor}{Corollary}
\newtheorem{ass}{Assumption}
\newtheorem{rem}{Remark\!\!}
\theoremstyle{definition}
\newcommand{\norm}[1]{\left\|{#1}\right\|}
\newcommand{\ab}[1]{\left|{#1}\right|}
\newcommand{\id}[2]{\{#1,\dots,#2\}} 
\newcommand{\argmax}{\mathop{\rm argmax}}
\newcommand{\E}{{\mathbb{E}}}
\newcommand{\N}{{\mathbb{N}}}
\renewcommand{\P}{{\mathbb{P}}}
\newcommand{\R}{{\mathbb{R}}}
\newcommand{\cB}{{\mathcal{B}}}
\newcommand{\cC}{{\mathcal{C}}}
\newcommand{\cF}{{\mathcal{F}}}
\newcommand{\cL}{{\mathcal{L}}}
\newcommand{\bs}[1]{\boldsymbol{#1}}
\newcommand{\modar}{\color{black}}
\newcommand{\modarn}{\color{black}}
\newcommand{\modch}{\color{black}}
\newcommand{\modchi}{\color{black}}
\newcommand{\modhel}{\color{black}}
\newcommand{\revar}{\color{black}}
\newcommand{\rev}{\color{black}}
\DeclarePairedDelimiter{\abs}{\lvert}{\rvert}
\title{Evolving privacy: drift parameter estimation for discretely observed i.i.d. diffusion processes under LDP}
\author{Chiara Amorino\thanks{ Universit\'e du Luxembourg, L-4364 Esch-Sur-Alzette, Luxembourg. The author gratefully acknowledges financial support of ERC Consolidator Grant 815703 “STAMFORD: Statistical Methods for High Dimensional Diffusions”.} \qquad Arnaud Gloter \thanks{Laboratoire de Math\'ematiques et Mod\'elisation d'Evry, CNRS, Univ Evry, Universit\'e Paris-Saclay, 91037, Evry, France.} \qquad Hélène Halconruy \thanks{Télécom SudParis, Institut polytechnique de Paris, 91120, Palaiseau, France.}} 
\begin{document}
\maketitle




\begin{abstract}
\noindent
The problem of estimating a parameter in the drift coefficient is addressed for $N$ discretely observed independent and identically distributed stochastic differential equations (SDEs). This is done considering additional constraints, wherein only public data can be published and used for inference. The concept of local differential privacy (LDP) is formally introduced for a system of stochastic differential equations. The objective is to estimate the drift parameter by proposing a contrast function based on a pseudo-likelihood approach. A suitably scaled Laplace noise is incorporated to meet the privacy requirements. Our key findings encompass the derivation of explicit conditions tied to the privacy level. Under these conditions, we establish the consistency and asymptotic normality of the associated estimator. Notably, the convergence rate is intricately linked to the privacy level, and in some situations may be completely different from the case where privacy constraints are ignored.
Our results hold true as the discretization step approaches zero and the number of processes $N$ tends to infinity.\\
\\
\noindent
 \textit{Keywords: local differential privacy, parameter drift estimation, high frequency data, convergence rate, privacy for processes} \\ 
 
\noindent
\textit{AMS 2010 subject classifications: 62F12, 62E20, 62M05, 60G07, 60H10}
\end{abstract}

\tableofcontents

\section{Introduction}

In recent years, the rapid accumulation of large-scale data, including medical records, cellphone location information, and internet browsing history, underscores the critical need for a nuanced understanding of the tradeoffs between privacy and the utility derived from collected data. Traditional privacy-preserving mechanisms like permutation or basic anonymization have proven inadequate, leading to notable privacy breaches with genomic and movie rating data (see \cite{
46Fisher}). {\rev For instance, in 2010, Netflix canceled its second data competition, the Netflix Prize, which was initially launched to improve its movie recommendations, due to privacy breaches. The Federal Trade Commission (FTC) investigated, prompting Netflix to prioritize user privacy over the competition. Other similar incidents highlight the urgency of balancing utility with the protection of sensitive information.}\\

\noindent
To address these challenges, researchers in statistics, databases, and computer science started studying differential privacy as a formalization of disclosure risk limitation (for example in \cite{Martin, 30Fisher, Dwo06}).  Differential privacy has evolved from a theoretical paradigm to a widely deployed technology in various applications over the last decade \cite{24Avella, 29Avella, 35Avella}. It addresses the need for protecting individual data while allowing statistical analysis of aggregate databases. This is achieved through a trusted curator holding individual data, and the protection is ensured by injecting noise into released information. The challenge is to optimize statistical performance while preserving privacy in a remote access query system. \\ 
{The first attempt to design a private methodology traces back to Dwork et al.'s work \cite{Dwo06}, that formalized global differential privacy. Research in this field now encompasses central or global privacy models, where a curator collects and privatizes data before releasing only the output, and local privacy models, involving randomized data collection.} Major technology companies like Apple \cite{5Fisher, 71Avella} and Google \cite{1Fisher, 29Avella} have adopted local differential privacy protections, reflecting the broader impact of privacy measures in billions of devices. Understanding the fundamental limitations and possibilities of learning with local privacy notions is crucial in this context.\\

\noindent
Historically, methods for locally private analysis were primarily focused on estimating parameters within a binomial distribution \cite{37CLDP}. However, recent advancements in research have introduced mechanisms that extend to a broader array of statistical challenges. These encompass hypothesis testing \cite{4CLDP, 29CLDP}, M-estimation \cite{Avella}, robustness \cite{31CLDP}, change point analysis \cite{6CLDP}, mean and median estimation \cite{Martin}, and nonparametric estimation \cite{11CLDP, 10CLDP, Kroll}, among others. With the growing importance of data protection, striking the right balance between statistical utility and privacy becomes crucial. It is imperative to safeguard data against privacy breaches while still facilitating the extraction of valuable insights. As a result, finding the right balance between these two factors has taken on greater importance.

\noindent
To the best of our understanding, the exploration of statistical inference under privacy constraints has traditionally centered on $N$ random variables. The consideration of variables having a historical context, involving $N$ stochastic processes across a time horizon $[0,T]$, has been noticeably absent from prior investigations. This paper aims to address this gap by examining the drift parameter estimation from i.i.d. paths of diffusion processes while adhering to local differential privacy constraints.

\noindent
Note that the literature on statistical estimation for stochastic differential equations is extensive, owing to the model's versatility and its applicability across various domains. Examples abound, spanning biology \cite{46Est}, neurology \cite{29Est}, 
finance \cite{30Est}, and economics \cite{11Est}. Classical applications extend to physics \cite{44Est} and mechanics \cite{36Est}. {\rev The framework mentioned can also be employed to model the inherent variability of biomedical experiments, with a specific emphasis on pharmacology (refer to \cite{Wang,22Wang, 23Wang, 24Wang} and \cite{21Wang} for an overview of stochastic differential equation estimation for pharmacodynamic models)}. This is why focusing on diffusion processes becomes interesting when dealing with sensitive data that requires privacy guarantees. However, a significant challenge arises when applying privacy in this context due to the dependency structure of the process. Indeed, it becomes possible to recover information about the private process at a particular time instant through observations about its past or future.\\

\noindent
In this work we consider $N$ i.i.d. diffusion processes $(X_t^{\theta,i})_{t\in[0,T]}$ for $i \in \{1, \dots , N \}$, with a fixed time horizon $T > 0$. They follow the dynamics
\begin{equation*}
X_t^{\theta, i}=X_0^{\theta, i}+\int_0^tb(\theta,X_s^{\theta, i})ds+\int_0^t {\modch \sigma}(X_s^{\theta,i})dW_s^{i},
\end{equation*}
where the processes $(W^i_t)_{t \in [0, T]}$ are independent standard Brownian motions, and are also independent of the i.i.d. random variables $(X_0^{\theta, i})_i$.
We aim to estimate the true parameter value $\theta^{\star} \in \Theta$ based on the discrete observations $(X_{t_{j}}^{\theta^\star,i})_{i=1, \ldots, N, j=1, \ldots, n}$ under $\alpha$ local differential privacy constraints, for $N, n \rightarrow \infty$. For simplicity, we will denote $X_{t_{j}}^{i}$ when the process is computed in the true value of the parameter $\theta^{\star}$, that is $X_{t_{j}}^{\theta^{\star},i}$.
In our context, the discrete observations of the private processes $(X_t^1)_t, \ldots, (X_t^N)_t$ are not directly available. Instead, our estimator relies on a public sample derived from the original discrete observations, subject to $\alpha$ local differential privacy. To achieve this, we begin by adapting the concept of local differential privacy to a context that accommodates the presence of processes, leveraging the definition of \textit{componentwise local differential privacy} (CLDP) proposed in \cite{CLDP} (refer to Section \ref{s: formulation privacy} for comprehensive details). In our interpretation, the $N$ processes represent $N$ independent individuals evolving over time. Each individual contributes $n$ observations, subject to componentwise local differential privacy, ensuring that the information at different time points is privatized in a different way. This approach enables us to make public each observation corresponding to the same individual separately, using a distinct privacy channel with a privacy level $\alpha_j$, for $j \in \{1, ... , n \}$. This strategy is advantageous as it allows for tailoring the level of privacy protection based on the specific context, recognizing that different observations corresponding to various moments in an individual's life may warrant varying degrees of privacy safeguards.
\\
The rationale behind this choice lies in the recognition that disclosing information about the distant past may have different implications than disclosing information about the present. By treating each observation independently, we can tailor the level of privacy protection based on the specific circumstances.
\\
In our study, we examine a privacy mechanism where the data holder can observe two consecutive realizations of $X^i$ each time. This scenario models situations such as when a patient's vitals are taken, and the doctor (data holder) has access not only to the current data but also to the data from the previous check-up (control data).\\
Given the correlated nature of the observations, it becomes crucial to operate within a framework that acknowledges this dependency. A pertinent question arises: Is it possible to extract sensitive information about the present by exploiting the fact that some extra information is carried by another observation from the past? \\
To answer this question, it is important to understand the constraints on the privacy levels $\alpha_1, ... , \alpha_n$ necessary to obtain well-performing estimators. Specifically, we will explore how the behavior of $\alpha_1, ... , \alpha_n$ as functions of $N$ and $n$ leads to two distinct asymptotic regimes, which we will refer to as the "significant contribution of privacy" and "negligible contribution of privacy".
\\
Without the presence of privacy constraints, a natural approach to estimating unknown parameters from the continuous observation of a SDE would be to use maximum likelihood estimation. However, the likelihood function based on the discrete sample is not tractable, as it depends on the transition densities of the process, which are not explicitly known. To overcome this difficulty, several methods have been developed for high-frequency estimation of discretely observed classical SDEs. A widely-used method involves considering a pseudo-likelihood function, often based on the high-frequency approximation of the process dynamics using the Euler scheme, as seen in \cite{25McK, 42McK, 60McK}.\\
This statistical analysis relies on the minimization of a contrast function, akin to methods proposed for classical SDEs, extended to L\'evy-driven SDEs \cite{Shimizu, Contrast} and interacting particle systems \cite{Imperial, McKean}.
\\
Even in our context, with the presence of local differential privacy constraints, it seems natural to leverage the minimization of a contrast function technique, incorporating this quantity into the definition of the privacy mechanism employed in our estimation procedure. Furthermore, introducing centered Laplace-distributed noise to bounded random variables is known to ensure ${\alpha}$-differential privacy (see \cite{CLDP}, \cite{Martin}, \cite{Kroll}). This motivates our choice of the anonymization procedure. Specifically, we adopt a Laplace mechanism to construct the public counterpart of the raw samples, as illustrated in \eqref{eq: def Z} below. Some technical challenges arise from such a definition, mainly due to the fact that Laplace random variables are defined only for some values of $\theta$ on a grid, whose size $L_n$ goes to $\infty$ for $n \rightarrow \infty$. Consequently, our definition of the public sample $(Z^i_j(\theta))_{i =1, \dots, N,\, j=1, \dots, n }$ holds true only on the grid as well. However, to define our estimator, we need to extend the definition of the public sample to $\theta$ belonging to the whole parameter space $\Theta$. To address this, we rely on a spline approximation method, detailed in Section \ref{s: splines}. Then, with access to the public data $Z_j^i(\theta)$ for $i=1, \dots , N$ and $j=1, \dots , n$, the statistician can propose an estimator obtained by minimizing the contrast function, which is the spline approximation of the private version of the contrast function in the case of classical SDEs (see Section \ref{Sec_results} for details).\\

\noindent
The main result of the paper is the consistency and asymptotic normality of the resulting estimator, demonstrated using a central limit theorem for martingale difference triangular arrays.

\noindent{Let us introduce 
\begin{equation}\label{Eq_def_rnN}
r_{n,N} := \frac{L_n^2 \log(n)}{\sqrt{\bar{\alpha}_2}},
\end{equation}
where $\bar{\alpha}_2$  is the harmonic mean over squared different privacy levels, defined as $1/\bar{\alpha}_2 := n^{-1} \sum_{j = 1}^n 1/\alpha_j^2$ {\modhel and that may be dependent on $N$.}
We establish the consistency of the proposed estimator under the assumption that 
$$\frac{1}{L_n} \sqrt{\frac{\log(L_n)}{N}} r_{n,N} \rightarrow 0\quad \text{as}\; n, N, L_n \rightarrow \infty.$$}
 This requirement implies that the case of perfect privacy ($\bar{\alpha} = 0$) is not allowed and informs us about the price to pay for the privacy guarantee in obtaining reasonable statistical results.

\noindent
Moreover, as anticipated earlier, we prove the asymptotic normality of our estimator under the two regimes delineated by different values of $\bar{\alpha}$. In particular, we find that if the privacy parameters $\alpha_1, ..., \alpha_n$ are large enough to guarantee 
{\modch $r_{n,N} \sqrt{\log(L_n)} \rightarrow 0$}, 
 with some technical conditions, 
 we essentially obtain the same result as in the case without privacy:
$$\sqrt{N} (\widehat\theta_n^N - \theta^{\star}) \xrightarrow{\mathcal{L}} {\mathcal N}\bigg(0,2 \Big(\int_0^T \E\bigg[\Big(\frac{\partial_\theta b(\theta^{\star}, X_s)}{{\modch \sigma}(X_s)}\Big)^2\bigg] ds \Big)^{-1}\bigg) = : {\mathcal N}\big(0,2( \Sigma_0)^{-1}\big)  \quad \mbox{as } n, N\rightarrow\infty.$$

\noindent
Indeed, the convergence above asserts the asymptotic Gaussianity of our estimator with a convergence rate and a variance that resemble the classic scenario without any privacy constraints.\\
When the contribution of privacy is instead the dominant one (i.e. 
{\modch $r_{n,N} \rightarrow \infty$,} {with $r_{n,N}$ defined by \eqref{Eq_def_rnN}}), we still recover the asymptotic normality of our estimator, but with a different convergence rate, depending on the average amount of privacy 
$\bar{\alpha}_2$. In this context, some extra challenges appear, leading us to replace the previous grid with a random one, depending on a uniform random variable $S$ (see Equation \eqref{eq: random grid}). Then, subject to some technical conditions we are able to prove the following:
$$\frac{\sqrt{N \bar{\alpha}_2}}{ 4(a + 1) L_n^2 \log(n) \sqrt{T }} (\widehat\theta_n^N - \theta^{\star})\xrightarrow{\mathcal{L}} \sqrt{\overline{v}(s)} (\Sigma_0)^{-1} \, {\mathcal N},$$
 where ${\mathcal N}$ is a gaussian $\mathcal N(0,1)$ random variable independent of $S$, $a$ and $\overline{v}$ are respectively a tuning parameter and an auxiliary function properly defined in \eqref{E: def hat v}; both depending on the spline functions under consideration.\\
{\modch It is noteworthy that our results distinctly reveal $L_n^2 \log(n)$ as the threshold for $\sqrt{\bar{\alpha}_2}$, thereby delineating whether the newly introduced term arising from the privacy constraints is a primary contributing factor. The significance of this threshold becomes apparent through our findings. Theorem \ref{th: as norm privacy negl} indicates that the case of 'negligible' privacy corresponds to the constraint $r_{n,N} \sqrt{\log(L_n)} \rightarrow 0$, while Theorem \ref{th: as norm} establishes the condition $r_{n,N} \rightarrow \infty$ for achieving 'significant' privacy. In Corollary \ref{cor: threshold}, we investigate the scenario where $r_{n,N}$ converges to a constant. In this case, we are able to demonstrate the convergence in law of our estimator to the sum of the two Gaussian random variables obtained in Theorems \ref{th: as norm} and \ref{th: as norm privacy negl}, respectively. \\
This emphasizes that the threshold demarcating the two asymptotic regimes of 'significant' and 'negligible' privacy is dictated by the asymptotic behavior of $r_{n,N}$. The additional $\log(L_n)$ in the condition outlined in Theorem \ref{th: as norm privacy negl} is introduced for technical reasons associated with the grid, representing a non-optimal condition necessary to mitigate the impact of privacy constraints.} {\rev Additionally, we present an example where the drift is polynomial in \(\theta\), motivated by the fact that, in this case, the spline approximation is easier to handle. This allows us to concentrate on the impact of privacy. We also include a discussion on effective privacy and its influence on the convergence rates.} \\
 \\
The paper is organized as follows. In Section \ref{Sec_setting}, we introduce the model and its underlying assumptions. Notably, we formulate the definition of local differential privacy tailored to stochastic processes in Section \ref{s: formulation privacy}. Section \ref{Sec_results} is dedicated to presenting the contrast function pivotal for our estimator's definition, alongside articulating our main results, whose strengths and weaknesses we discuss in Section \ref{s: discussion}. Turning to Section \ref{Sec_tools}, we furnish essential tools essential for proving our main results. This includes an exploration of spline functions in Section \ref{s: splines}, followed by technical results outlined in Section \ref{s: technical}. In summary, the proofs of our main and technical results find their place in Sections \ref{s: proof main} and \ref{s: proof preliminary}, respectively.

\color{black}\section{Mathematical framework}\label{Sec_setting}

\subsection{Setting and assumptions}

\color{black}

Let $T>0$. Let $W^1,\dots,W^N$ be $N\in\N^* = \{1, 2, 3, ... \}$ independent standard Brownian motions. For any $i\in\{1,\dots,N\}$, we consider the diffusion process $(X_t^{\theta,i})_{t\in[0,T]}$ defined by
\begin{equation}\label{Eq_EDS}
X_t^{\theta, i}=X_0^{\theta, i}+\int_0^tb(\theta,X_s^{\theta, i})ds+\int_0^t{\modch \sigma}(X_s^{\theta,i})dW_s^{i},
\end{equation}
where $b:\Theta\times\R\rightarrow\R$, ${\modch \sigma}:\R\rightarrow\R$ and $\theta\in\Theta$. {\modch We fix $\Theta:=[0,1]$ to simplify the notation}. Assume that for any $i\in\{1,\dots,N\}$, the processes $(W^i_t)_{t \in [0, T]}$ are independent of the initial value $(X_0^{\theta,1}, \dots , X_0^{\theta, N})$. We also assume that $(X_0^{\theta, i})_i$ are i.i.d with $X_0^{\theta, i} \in \cap_{p \ge 1} L^p$.\\
As anticipated in the introduction, we aim at estimating the parameter {\modch $\theta^{\star} \in \overset{\circ}{\Theta}=(0,1)$} given the observations $(X_{t_{j}}^{\theta,i})_{i=1, \dots , N,\, j=1, \dots , n}$ subject to local differential privacy constraints, see next section for a formal definition and a rigorous introduction of the problem.  \\
We introduce the discretization step as $\Delta_n:= T/n$, so that $t_{j,n} = jT/n = j \Delta_n$. The asymptotic framework here considered is such that both $N, n \rightarrow \infty$ while the time horizon $T$ is fixed. {\modchi Moreover, $N$ goes to $\infty$ as a polynomial of $n$, i.e. there exists $r > 0$ such that $N = O(n^r)$.} \\
\\
Let us consider the assumptions : 
\begin{ass}\label{Ass_1} For all $\theta\in\Theta$, the function $b(\theta,\cdot)$ is bounded. Moreover, $b(\theta,\cdot)$ and ${\modch \sigma}$ are globally Lipschitz, i.e  there exists $c > 0$ such that, for all $x, y \in \R$, 
$$|b(\theta, x) - b(\theta, y)| + |{\modch \sigma}(x) - {\modch \sigma}(y)| \le c |x-y|.$$
\end{ass}

\begin{ass}\label{Ass_2}
We assume that the diffusion coefficient is bounded away from $0$: for some ${\modch \sigma_{\min}}>0$,
\begin{equation*}
{\modch \sigma_{\min}}^2\le {\modch \sigma^2}(x).\\
\end{equation*}
\end{ass}

\noindent
Under assumptions \ref{Ass_1} and \ref{Ass_2}, Equation \eqref{Eq_EDS} has a unique strong solution $(X_t^{\theta, i})_{t\in[0,T]}$ taking its values in $(\R,\cB(\R))$.

\begin{ass}\label{Ass_3}
For all $k \in \mathbb{N}^*= \{1, 2, ... \},$ the function $\frac{\partial^k}{\partial \theta^{k}} b$ is bounded uniformly in $\theta$: $\sup_{x, \theta}| \frac{\partial^k}{\partial \theta^{k}} b(\theta, x)| < \infty$.
\end{ass}

%
%

\begin{ass}\label{Ass_4}[Identifiability] \\
For all $\theta\in \Theta$ such that $\theta \neq \theta^\star$, it is 
\[\int_0^T\E\bigg[\frac{\big(b(\theta,X_s)-b(\theta^\star,X_s)\big)^2}{{\modch \sigma^2}(X_s)}\bigg]ds > 0.\] 
\end{ass}
\noindent One can easily check that Assumption \ref{Ass_4} is equivalent to ask that, for any $\theta \in \Theta$ such that $\theta \neq \theta^\star$, it is $b(\theta, \cdot) \neq b(\theta^\star, \cdot)$ almost surely. This will be crucial in order to prove the consistency of the estimator we will propose. \\
Observe that, in the sequel, we will often shorten the notation $\partial_\theta^k g$ for $\frac{\partial^k}{\partial \theta^{k}} g$, for any derivable function $g$.

\begin{ass}\label{Ass_5}[Invertibility] \\
Define $I(\theta) := \int_0^T\E\big[( \frac{\partial_\theta b(\theta, X_s)}{{\modch \sigma}(X_s)} )^2 \big]ds$. We assume that, for any $\theta \in \Theta$, $I(\theta) > 0$.
\end{ass}
\noindent We will see that Assumption \ref{Ass_5} will be essential to prove the asymptotic normality of our estimator.

{\revar
\subsection{Problem formulation}{\label{s: formulation privacy}}
%
%
\subsubsection{The local differential privacy formalism}
Let $({X}_t^1), \ldots, ({X}_t^N)$ be solutions of the stochastic differential equation \eqref{Eq_EDS}. Since they are driven by independent Brownian motions, we can regard them as $N$ independent realizations of the same diffusion process. It is important to note that, in general, estimating the drift in SDEs driven by a diffusion is not feasible over a finite time horizon when relying solely on discrete observations from a single path of the solution. However, in this context, the number of copies $N$ will serve a similar purpose as the time horizon $T$ does in parameter estimation for SDEs driven by a diffusion. As $N$ tends to infinity, the sample size will effectively increase, allowing for our estimation procedure. 
The $N$ copies follow the law of the same stochastic process $(X_t)_{t\in [0,T]}$, and we observe each of them in $n + 1$ different instants of time $0=t_0 \le t_1 \le \dots \le t_n = T$. 
They can represent the information coming from $N$ different individuals, that evolve in time. \\

\noindent
In our approach, we adopt componentwise local differential privacy as introduced in \cite{CLDP}. {It means that we do not release the public data pertaining to each individual based on the whole vector of its private data, 
but instead release data relying on componentwise observation of this private vector. As shown in \cite{CLDP}, this constraint usually makes harder to infer the joint law of the private data, but it might be more suitable in practice when dealing with temporal series.} 


\noindent
Let us now formalize the framework discussed earlier. We introduce $\bm{X}^i := (X^i_{t_0}, \dots , X^i_{t_n})$ for any $i \in \{1, \dots , N \}$. The process of privatizing the raw samples $(\bm{X}^i)_{i=1,\ldots,N}$ and transforming them into the public set of samples $(\bm{Z}^i)_{i=1,\ldots,N}$ is captured by a conditional distribution, known as privacy mechanism or channel distribution.
We make the assumption that each component of a disclosed observation, denoted by $Z^i_{j}$, is privatized independently and belongs to a specific space $\mathcal{Z}^i$, which may vary for each $i$. This implies that the observation $\bm{Z}^i$ belongs to the product space $\bm{\mathcal{Z}}:=\prod_{j=1}^n \mathcal{Z}_j$.
Additionally, we assume that the spaces $\mathcal{Z}_j$ are separable complete metric spaces. Their corresponding Borel sigma-fields define measurable spaces $(\mathcal{Z}_j,\Xi_{\mathcal{Z}_j})$ for all $j\in \{
0,\dots, n\}$.\\
\noindent
We now explore the properties and structure of the privacy mechanism within our framework. For simplicity, the privacy mechanism is designed to be non interactive between the $N$ individuals. This gives rise to the following independence structure : for $ j\in\{1,\dots,n\}$, 
	\begin{equation}\label{Eq: PM structure}
		\{ {X}^i_{t_j}, {X}^i_{t_{j-1}}\} \rightarrow Z^i_{j}, \qquad Z^i_{j} \perp \!\!\! \perp X^k_{t_l}  \, \text{ for } k \neq i, \, \forall l \in \{0,\dots,n\}.
	\end{equation}}
{\rev	In the non-interactive case we are considering, \eqref{Eq: PM structure} means that for $j=1,\dots, n$ and $i= 1, \dots, N$, given $X^i_{t_j} = x^i_j\in\R$ and $X^i_{t_{j-1}} = x^i_{j-1}\in\R$,
	the public output $Z^i_{j}\in\mathcal{Z}_j$ is drawn as
\begin{equation}
	{\label{eq: def Z}}
	Z^i_{j} \sim Q_j (\cdot | X^i_{t_j} = x^i_j, X^i_{t_{j-1}} = x^i_{j-1})
\end{equation}
for Markov kernels $Q_j: \Xi_{\mathcal{Z}_j} \times (\R \times \R ) \rightarrow [0, 1]$.
The notation $(\bm{\mathcal{Z}}, \Xi_{\bm{\mathcal{Z}}})=(\prod_{j=1}^n \mathcal{Z}_j,\otimes_{j=1}^n \Xi_{\mathcal{Z}_j})$ refers to the measurable space of non-private (or public) data. 
The space of public data, denoted by $\bm{\mathcal{Z}}$, can be quite general, as it is selected by the statistician based on a specific privatization mechanism. Nonetheless, in the parameter estimation discussed below, it will be valued in $\bm{\mathcal{Z}}=\mathbb{R}^{d_Z \times n}$ for some dimension $d_Z$ given in {Section \ref{Sec_results}}. \\

\noindent
The notion of privacy can be quantified using the concept of local differential privacy. Let $\bm{\alpha} = (\alpha_1, \ldots , \alpha_n)$ be a given parameter, where $\alpha_j \ge 0$ for each $j \in \{1, \ldots , n \}$. We say that the random variable $Z^i_{j}$ is an $\alpha_j$-differentially locally privatized view of $X^i_{t_j}$ and $X^i_{t_{j-1}}$ if, for all $x_j, x_j' \in \R$, and $x_{j-1}, x_{j-1}' \in \R$, the following condition holds:
\begin{equation}{\label{eq: def local privacy}}
	\sup_{A \in  \Xi_{\mathcal{Z}_j}}
	\frac{Q_j(A |X^i_{t_j} = x_j, X^i_{t_{j-1}} = x_{j-1} )}{Q_j(A |X^i_{t_j} = x'_j, X^i_{t_{j-1}} = x'_{j-1})} \le \exp(\alpha_j).
\end{equation}
\noindent
We define the privacy mechanism $\bm{Q}=(Q_1,\ldots,Q_n)$
	to be $\bm{\alpha}$-differentially locally private if each variable $Z^i_{j}$ satisfies the condition of being an $\alpha_j$-differentially locally privatized view of $X^i_{t_j}$ and $X^i_{t_{j-1}}$. The parameter $\alpha_j$ serves as a measure of the level of privacy guaranteed to the variables $X^i_{t_j}$ and $X^i_{t_{j-1}}$. By setting $\alpha_j = 0$, we ensure perfect privacy, meaning that it is impossible to recover these variables from the perspective of $Z^i_{j}$.
On the other hand, as $\alpha_j$ tends to infinity, the privacy restrictions become less stringent. \\

\subsubsection{Modelling issues}

The privatization structure described in the previous section 
highlights 
a major difference
compared to the definition of componentwise local differential privacy (CLDP) introduced in \cite{CLDP}. 
In our case, the variable $Z^i_{j}$ is derived not only from the private value of $X^i_{t_j}$, but also from the previous observation at time $t_{j-1}$. This allows for incorporating additional temporal information in the privatization process. \\
\noindent
The description of the privacy mechanism is tied to modelling issues. Indeed, if in practice the data holder of the private data could access the entire time series $\bm{X}^i=(X^i_{t_j})_{j=0,\dots,n}$, it would be statistically better to output one public value $Z^{i}$ based on the whole time series for each individual. The formal definition of the $\alpha$-LDP constraint \eqref{eq: def local privacy} would be modified in this situation, by conditioning on the whole time series instead of two consecutive values. Our privacy mechanism, based on two consecutive data, is in practice more flexible than a privatization based on the whole vector $\bm{X}^i$. It enables for instance a statistician accessing to the record of just two consecutive private values of an individual to output the public data.\\
 \noindent
 However, this description excludes the situation where the private data can be accessed only for a single date before releasing the public value. In this case the value
 $Z^{i}_j$ would be a randomization of $X^i_{t_j}$ only.  Such privacy mechanism would be necessary if the private data has to be destroyed promptly after being collected, in particular making impossible a public randomization based on two temporally distinct data.
By \cite{CLDP}, we know that a one-component based randomization of a vector $\bm{X}^i$ increases the error of estimation for the joint law of $\bm{X}^i$. In turn, it certainly would deteriorate the quality of estimation of the drift parameter which is a main feature of the joint law of two consecutive data. For this reason, we exclude it from our analysis.
\vspace{5pt}\\
Let us mention that a more general concept of interactive privacy mechanism could be considered. In that case, the output $Z_j^i$ would be constructed on the basis of some private data and all the public data already available.  
Although it is typically easier to work with non-interactive algorithms, 
as they yield independent and identically distributed privatized samples, 
there are situations where it is advantageous for the channel's output to depend on previous computations. Stochastic approximation schemes, for instance, require this kind of dependency (see \cite{49Martin}).  In our framework the temporal aspect of the private data $X_j^i$ is along the index $j$, whereas the usual situation of interactive mechanism is to construct $\bm{Z}^i$ inductively on the index $i$ corresponding to the individuals. Thus, to cope with real situation the definition of interactive mechanism should be modified accordingly in our case. For this reason, we do not pursue the discussion here about interactive mechanism.
\\

\subsubsection{Effective privacy} \label{Ss: effective privacy v2}

When considering local differential privacy, a natural question arises about the likelihood of recovering private data from observing public data. Specifically, the definition of $\alpha_j$-LDP given in \eqref{eq: def local privacy} tells us how well the values of $X^i_{t_j}$ and $X^i_{t_{j-1}}$ are protected when $Z^i_j$ is observed. As shown in \cite{WassermanZhou10}, even if someone gains access to private values $(x_{j-1}, x_j)$ and $(x'_{j-1}, x'_j)$, they cannot reliably determine which pair corresponds to a given public observation $Z^i_j$. Any attempt to make such a distinction would result in an error with a probability of at least $(1 + e^{\alpha_j})^{-1}$.
\\
\noindent
However, the public information available related to the individual $i$ is $(Z^i_j)_{1\le j \le n}$, and 	
some information about $X^i_{t_j}$ and $X^i_{t_{j-1}}$ could be also conveyed by $Z^i_k$, for $k \neq j$. Therefore, it is worth understanding how precisely the values of $X^i_{t_j}$ and $X^i_{t_{j-1}}$ could be revealed by the observations of $Z^i_1, \dots, Z^i_n$, which are publicly available. \\
Similar questions have been explored, for example, in \cite{WassermanZhou10} and in Section 4.1 in \cite{CLDP}. In particular, it is well understood that if a vector is privatized with an independent channel for each component and the components are independent, {then no information on a private component is carried by the public views of the others.}
	\\
The situation becomes more intricate if the components of the vector $(X^i_{t_0}, \dots, X^i_{t_n})$ are dependent, as the observation of $Z^i_1, \dots, Z^i_n$ imparts extra information on $X^i_{t_j}$ and $X^i_{t_{j-1}}$. This is evident in the present case, where $X^i_{t_0}, \dots, X^i_{t_n}$ represent the evolution of a single individual, making the components dependent. In Section 4.1 of \cite{CLDP}, we quantify this effect by evaluating how the low dependence between the components of a private vector reduces the privacy loss of $X^i_j$ revealed by public data $Z^i_l$, for $l \neq j$.
From the infill asymptotic $n\Delta_n =T $ with fixed $T$, we expect that the dependence between the components of the time series $\bm{X}^i$ are strong.
Thus, the knowledge of the whole public data  $(Z^i_l)_{l=0,\dots,n}$ reveals much more information on $X^i_{t_j}$ and $X^i_{t_{j-1}}$ than the unique value $Z^i_j$. 
Let us define the kernel $\bm{\overline{Q}}$ by $\bm{\overline{Q}}(A_1\times\dots\times A_n \mid X^i_{t_j}=x_j, X^i_{t_{j-1}}=x_{j-1}):=\mathbb{P}(Z^i_1\in A_1,\dots,Z^i_n\in A_n  \mid X^i_{t_j}=x^i_j, X^i_{t_{j-1}}=x^i_{j-1})$ for $(A_1,\dots,A_n) \in \prod_{l=1}^n \Xi_{\mathcal{Z}_l}$. The kernel $\bm{\overline{Q}}$ is the law of the whole vector of public data containing information about  $(X^i_{t_j}, X^i_{t_{j-1}})$. This kernel satisfies the LDP constraint
\begin{equation}\label{Eq : LDP overline bm Q}
	\sup_{\bm{A}\in \otimes_{j=1}^n \Xi_{\mathcal{Z}_j}}
	\frac{	\bm{\overline{Q}}(\bm{A} \mid X^i_{t_j}=x'_j, X^i_{t_{j-1}}=x'_{j-1})}{	\bm{\overline{Q}}(\bm{A} \mid X^i_{t_j}=x_j, X^i_{t_{j-1}}=x_{j-1})}
	\le \exp\left(\sum_{l=1}^n \alpha_l\right).
\end{equation}
A proof of  \eqref{Eq : LDP overline bm Q} is given in Section \ref{Ss: proof LDP overline bm Q}. 
This equation provides an upper bound on how the privacy of ($X^i_{t_j}, X^i_{t_{j-1}}$) is affected by observing the entire public dataset $Z^i_1, \dots, Z^i_n$. Specifically, the upper bound on the effective level of privacy is given by $\alpha_{\text{eff}} := \sum_{j=1}^n \alpha_j$. Consequently, the lower bound for the minimal error in estimating the true values of $X^i_{t_j}$ and $X^i_{t_{j-1}}$ is reduced to $\left(1 + \exp(\alpha_{\text{eff}})\right)^{-1}$. We conclude that effective privacy is ensured as long as $\alpha_{\text{eff}} = \sum_{j=1}^n \alpha_j = O(1)$.
\\

\noindent
In the next section, we introduce a privacy mechanism that forms the basis of our estimation procedure. First, we will show that it meets the $\bm{\alpha}$-local differential privacy condition, as defined in Equation \eqref{eq: def local privacy}. Then, we will demonstrate that the estimator we propose, based solely on observations of the privatized views $Z^i_{j}$ for $i = 1, \dots, N$ and $j = 1,\dots, n$, is consistent and asymptotically normal, with the convergence rate depending on the privacy level $\bm{\alpha}$, in the case of significant privacy.}

\color{black}
\section{Statistical procedure and main results}\label{Sec_results}
\subsection{Privatization mechanism and estimation procedure} \label{S: Privatization mechanism}

\color{black}
We assume that the functions $b$ and $\sigma$ are known and we aim at estimating the unknown parameter $\theta^\star$ under $\bm{\alpha}$-local differential privacy, as properly formalized in the previous section. Hence, we need to define a public sample $(Z_{j}^{i})_{1\leq i\leq N,1\leq j\leq n}$ obtained from observations $(X_{t_j}^{\theta,i})_{1\leq i\leq N,0\leq j\leq n}$ of the initial data via a privatization mechanism that satisfies the condition in \eqref{eq: def local privacy}. \\
\\
Consider the classical scenario in statistics, where the goal is to estimate the parameter $\textcolor{black}{\theta^\star}$ based on continuous observations of the stochastic differential equation given in \eqref{Eq_EDS}. It is well-known that the maximum likelihood estimator (MLE) performs optimally in this case, being consistent and asymptotically Gaussian with an optimal variance. However, when privacy is not a concern and only discrete observations of the equation are available, the transition density (and hence the likelihood) of the process is generally no longer accessible. To address this challenge, a commonly used approach in the literature is to employ a contrast function that serves as a substitute for the likelihood. This contrast function approximates the likelihood based on the Euler approximation scheme. Specifically, \textcolor{black}{for any $\theta\in\Theta$}, it takes the form :
$$\sum_{i=1}^N\sum_{j=1}^n\frac{(X_{t_j}^{i}-X_{t_{j-1}}^{i}-\Delta_n b(\theta,X_{t_{j-1}}^{i}))^2}{{\modch \sigma}^{2}(X_{t_{j-1}}^{i})}.$$
The proposed estimator, denoted as $\hat{\theta}_n^N$, minimizes the above quantity over the parameter set $\Theta$. It can be verified that minimizing the aforementioned quantity is equivalent to maximizing the following expression:
\begin{equation}\label{Eq_Contrast_function}
\sum_{i=1}^N\sum_{j=1}^n\frac{2b(\theta,X_{t_{j-1}}^{i})(X_{t_j}^{i}-X_{t_{j-1}}^{i})-\Delta_n b^2(\theta,X_{t_{j-1}}^{i})}{{\modch \sigma}^{2}(X_{t_{j-1}}^{i})}=:\sum_{i=1}^N\sum_{j=1}^nf(\theta;X_{t_{j-1}}^{i},X_{t_j}^{i}).
\end{equation}
Hence, it is natural to incorporate the above quantity into the definition of the privacy mechanism employed in our estimation procedure. \\
Furthermore, as said in the introduction, it is well-known that introducing centered Laplace-distributed noise to bounded random variables ensures $\bm{\alpha}$-differential privacy. Laplace random variables will therefore play a role in defining the privacy mechanism. \vspace{3pt}\\
%
\noindent
We commence by establishing a grid {\modar of the parameter space $\Theta=[0,1]$ 
upon which we will construct the Laplace random variables. Let us denote by $\Xi=\{0\le \theta_0<\cdots<\theta_{L_n-1}\le 1\}$ this grid. It has cardinality $L_n\in\N^{*}$. For $n \rightarrow \infty$, we assume $L_n$ to go to $\infty$ {\modchi with the restriction that there exists $r > 0$ such that $L_n=O(n^r)$.} 
} 


\noindent In order to define the privatized views of our data we need to introduce a smooth version of the indicator function, that we denote as $\varphi$. It is such that $\varphi(\xi) = 0$ for $|\xi| \ge 2$, $\varphi(\xi) = 1$ for $|\xi| \le 1$ and, for $1 < |\xi| < 2$, $\varphi \in C^\infty(\R)$. \\
{\modar We fix $a \in \mathbb{N}^*$ and, for $(i,j,k,\ell)\in\{1,\dots,N\}\times\{1,\dots, n\}\times\{0,\dots,a\}\times\{0,\dots,L_n-1\}$, let us denote by $\mathcal{E}^{i, \ell,(k)}_j$ a random variable, such that $(\mathcal{E}^{i, \ell,(k)}_j)_{i,j,k,\ell}$ are independent variables with law 
\begin{equation} \label{E: law Epsilon}
 \mathcal{E}^{i, \ell,(k)}_j \sim	\cL(2 \tau_n \,L_n \, (a+1)/ \alpha_j),
\end{equation} 
where $\mathcal L(\lambda)$ stands for a Laplace distribution with mean $0$ and location parameter $\lambda>$0, $\tau_n:= \sqrt{\Delta_n} \log(n)$ and $L_n$ is the cardinality of $\Xi$ as above. We assume that the variables $\mathcal{E}^{i, \ell, (k)}_j$ are independent from the data $X_{t_{j'}}^{i'}$ for $i'=1, \dots , N$ and $j'=0, \dots , n$. Then, we set for any $i\in \{1, \dots , N \}$, $j \in \{1, \dots , n \}$, $k \in \{0,\dots,a\}$, $\ell \in \{0,\dots,L_n-1\} $	
	%
\begin{align}\label{Eq_private_Z}
		Z_j^{i,(k)}(\theta_\ell)&:=f^{(k)}(\theta_\ell;X_{t_{j-1}}^{i},X_{t_j}^{i})
	\varphi \left( f^{(k)}(\theta_\ell;X_{t_{j-1}}^{i},X_{t_j}^{i}) / \tau_n \right) +\mathcal{E}_j^{i,\ell,(k)} 	
\\ \nonumber
&=f^{i,\ell,(k)}_j \times \varphi_{\tau_n} \left( f^{i,\ell,(k)}_j\right) + \mathcal{E}_j^{i,\ell,(k)},
\end{align}
where 
\begin{equation} \label{E: def f_ijlk}
f^{i,\ell,(k)}_j:=	f^{(k)}(\theta_\ell;X_{t_{j-1}}^{i},X_{t_j}^{i}) = \frac{\partial^k}{\partial \theta^{k}} f (\theta_\ell, X_{t_{j-1}}^{i},X_{t_j}^{i})
\end{equation}
and $\varphi_{\tau_n}(\cdot):=\varphi(\cdot/\tau_n)$. Remark that the dependence on $k$ in $\mathcal{E}_j^{i,\ell,(k)}$ does not stand for the derivatives of the Laplace itself but it is a reminder of the fact we are adding some noise to the derivatives of $f$. An analogous comment applies to $Z_j^{i,(k)}$.  \\

\noindent The public data is then defined by 
\begin{equation} \label{eq : publiv Zij}
	Z_j^i:= ( Z_j^{i,(k)}(\theta))_{ \theta\in\Xi,0\leq k\leq a}\in\mathbb{R}^{L_n\times(a+1)}.
\end{equation}}

\noindent It is easy to check that the local differential privacy control holds true, as proven in the following lemma.
\begin{lemma}{\label{l: ldp}}
The public variables described in \eqref{eq : publiv Zij} are $\bm{\alpha}$-local differential private views of the original $(X_{t_{j-1}}^{i},X_{t_j}^{i})$. 
\end{lemma}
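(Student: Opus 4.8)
The plan is to verify the defining inequality \eqref{eq: def local privacy} directly for each coordinate $Z_j^{i,(k)}(\theta_\ell)$ of the public vector $Z_j^i$, using the classical fact that adding Laplace noise to a bounded quantity yields differential privacy, and then to combine the coordinates via their conditional independence. First I would fix $i,j$ and observe that, conditionally on $(X_{t_{j-1}}^i, X_{t_j}^i)$, the coordinates $(Z_j^{i,(k)}(\theta_\ell))_{0\le k\le a,\, 0\le \ell\le L_n-1}$ are independent, since the noises $\mathcal{E}_j^{i,\ell,(k)}$ are independent of each other and of the data. Hence the kernel $Q_j(\cdot\mid X_{t_j}^i=x_j, X_{t_{j-1}}^i=x_{j-1})$ factorizes as a product of one-dimensional kernels, one for each pair $(k,\ell)$, and the supremum of the likelihood ratio in \eqref{eq: def local privacy} factorizes accordingly into a product over $(k,\ell)$.

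Next I would bound each one-dimensional factor. For fixed $(k,\ell)$, write $g(x_{j-1},x_j):= f^{(k)}(\theta_\ell; x_{j-1}, x_j)\,\varphi_{\tau_n}\!\left(f^{(k)}(\theta_\ell; x_{j-1}, x_j)\right)$; by the definition of $\varphi$ this quantity vanishes whenever $|f^{(k)}(\theta_\ell;\cdot,\cdot)|\ge 2\tau_n$ and equals $f^{(k)}$ when $|f^{(k)}|\le \tau_n$, so in all cases $|g(x_{j-1},x_j)|\le 2\tau_n$, and therefore the "sensitivity" $\sup |g(x_{j-1},x_j)-g(x'_{j-1},x'_j)|\le 4\tau_n$. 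Since $Z_j^{i,(k)}(\theta_\ell)$ is $g$ plus an independent $\cL(\lambda)$ variable with $\lambda = 2\tau_n L_n (a+1)/\alpha_j$, the ratio of its densities at two admissible centers is, by the explicit form of the Laplace density, bounded by $\exp\!\big(\,|g(x_{j-1},x_j)-g(x'_{j-1},x'_j)|/\lambda\,\big)\le \exp\!\big(4\tau_n/\lambda\big)=\exp\!\big(2\alpha_j/(L_n(a+1))\big)$. Taking the product over the $L_n(a+1)$ pairs $(k,\ell)$ yields the bound $\exp\!\big(2\alpha_j\cdot L_n(a+1)/(L_n(a+1))\big)=\exp(2\alpha_j)$ — so to land exactly on $\exp(\alpha_j)$ as stated one should either absorb the factor $2$ by halving the sensitivity estimate (using $2\tau_n$ in place of $4\tau_n$, which holds since both values lie in $[-2\tau_n,2\tau_n]$ but a sharper argument treats the worst case where one endpoint is at $2\tau_n$ and the other at $0$, giving $2\tau_n$) or note that the $2$ is harmless up to relabeling $\alpha_j$; I would write the clean version with sensitivity $2\tau_n$. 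Finally, the independence relation $Z_j^i \perp\!\!\!\perp X_{t_l}^k$ for $k\ne i$ in \eqref{Eq: PM structure} is immediate from the construction, since $Z_j^i$ is a function of $(X_{t_{j-1}}^i, X_{t_j}^i)$ and noises indexed by $i$ only.

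The only genuinely delicate point is bookkeeping the constants: one must make sure the scaling $\lambda = 2\tau_n L_n(a+1)/\alpha_j$ is exactly calibrated so that the product over all $L_n(a+1)$ privatized coordinates of the per-coordinate privacy cost telescopes to $\alpha_j$ (this is the reason the factors $L_n$ and $(a+1)$ appear in \eqref{E: law Epsilon}), and so that the truncation level $\tau_n$ in $\varphi_{\tau_n}$ matches the boundedness constant used in the sensitivity bound. There is no real analytic obstacle; the proof is a composition-of-privacy argument plus the standard Laplace-mechanism computation. I would therefore present it as: (i) reduce to one coordinate by conditional independence, (ii) bound the sensitivity of the truncated statistic by $2\tau_n$, (iii) apply the Laplace-mechanism density-ratio bound with the given scale to get per-coordinate level $\alpha_j/(L_n(a+1))$, (iv) multiply over the $L_n(a+1)$ coordinates and over nothing else (the bound holds uniformly in $x_{j-1},x_{j-1}',x_j,x_j'$), and (v) note the cross-individual independence, which completes the verification of \eqref{eq: def local privacy} and hence of $\bm\alpha$-local differential privacy.
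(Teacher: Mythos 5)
Your overall strategy is exactly the paper's: factor the conditional law of $Z_j^i$ over the $L_n(a+1)$ coordinates $(k,\ell)$ using the independence of the noises, bound the sensitivity of each truncated statistic, apply the Laplace density-ratio bound with scale $2\tau_n L_n(a+1)/\alpha_j$, and multiply. The only place where you diverge is precisely the constant you flag yourself, and there your repair does not work.

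Concretely: you bound $|g|=|f^{(k)}\varphi_{\tau_n}(f^{(k)})|\le 2\tau_n$ and hence the sensitivity by $4\tau_n$, which gives $\exp(2\alpha_j)$ after the product. Your proposed "sharper argument" — that the worst case has one endpoint at $2\tau_n$ and the other at $0$, giving sensitivity $2\tau_n$ — is false: under the constraint $|g|,|g'|\le 2\tau_n$ the supremum of $|g-g'|$ is $4\tau_n$, attained at $g=2\tau_n$, $g'=-2\tau_n$. The paper closes this gap differently: it asserts (as part of the construction of $\varphi$) that $|f^{(k)}\varphi_{\tau_n}(f^{(k)})|\le\tau_n$, i.e.\ the cutoff is chosen so that $|\xi\,\varphi(\xi)|\le 1$ for all $\xi$ (which is achievable, since on $|\xi|\le 1$ it is automatic and on $1<|\xi|<2$ one only needs $\varphi(\xi)\le 1/|\xi|$). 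With that bound the triangle inequality gives sensitivity $|g-g'|\le |g|+|g'|\le 2\tau_n$, which exactly cancels the scale $2\tau_n L_n(a+1)/\alpha_j$ to yield a per-coordinate cost of $\alpha_j/(L_n(a+1))$ and hence $\exp(\alpha_j)$ overall. Since $\alpha_j$ is the privacy budget, the factor $2$ is not "harmless up to relabeling" for the statement as written; you should replace your sensitivity step by the bound $|g|\le\tau_n$ coming from the construction of $\varphi$, after which the rest of your argument (integration of the density-ratio bound over $A\in\Xi_{\mathcal{Z}_j}$ and the cross-individual independence) goes through verbatim.
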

\begin{proof}
Let us denote by $(z_{j}^{\theta,k})_{\theta \in \Xi, 0\le k \le a}\mapsto q_j\big((z_{j}^{\theta,k})_{\theta \in \Xi, 0\le k \le a} \mid X_{t_j}^{i}=x_j, X_{t_{j-1}}^{i} = x_{j-1}\big)$ the density of the public data $Z_j^i = (Z_j^{i,(k)}(\theta))_{\theta \in \Xi, 0\le k \le a} \in \mathbb{R}^{L_n\times (a+1)}$ conditional on $X_{t_j}^{i}=x_j$, $X_{t_{j-1}}^{i} = x_{j-1}$ for $j\in\{1,\dots , n\}$ and $i \in \{1, \dots , N \}$. \\
As $\mathcal{E}^{i, \ell,(k)}_j$ is distributed as a centered Laplace random variable with scale parameter $2\tau_n L_n(a+1)/\alpha_j$, its density at the point $x\in \R$ is given by $\frac{1}{ 2\tau_n L_n(a+1)} \alpha_j \exp(- \frac{1}{ 2\tau_n L_n(a+1)} \alpha_j |x|)$. Then, using the independence of the variables $(Z_j^{i,(k)}(\theta))_{\theta \in \Xi, 0 \le k \le a}$
{\modch and denoting  $\varphi_{f^{(k)},\tau_n}^{\theta}(x,y): = \varphi\left( f^{(k)}(\theta;x ,y)/\tau_n\right)$ for $\theta \in \Xi$, we have}
\begin{align*}
&\frac{q_j((z_{j}^{\theta,k})_{\theta \in \Xi, 0 \le k \le a} \mid X_{t_j}^{i}=x_j, X_{t_{j-1}}^{i} = x_{j-1})}{q_j((z_{j}^{\theta,k})_{\theta \in \Xi, 0 \le k \le a} \mid X_{t_j}^{i}=x_j', X_{t_{j-1}}^{i} = x_{j-1}')}\\
&= \prod_{\theta\in \Xi } \prod_{k=0}^a \exp\bigg[\frac{\alpha_j}{2\tau_n L_n(a+1) }\big|z-f^{(k)}(\theta;x_j,x_{j-1})\varphi_{f^{(k)},\tau_n}^{\theta}(x_{j-1},x_j)\big|\\
&\hspace{100pt}-\frac{\alpha_j}{2 \tau_n L_n(a+1) }\big|z-f^{(k)}(\theta;x_j',x_{j-1}') \varphi_{f^{(k)},\tau_n}^{\theta}(x_{j-1}',x_j')\big|\bigg]\\
& \leq \prod_{\theta\in \Xi } \prod_{k=0}^a \exp\bigg[\frac{\alpha_j}{2\tau_n L_n(a+1) }\big|f^{(k)}(\theta;x_j,x_{j-1})\varphi_{f^{(k)},\tau_n}^{\theta}(x_{j-1},x_j) \\
&\hspace{100pt}- f^{(k)}(\theta;x_j',x_{j-1}') \varphi_{f^{(k)},\tau_n}^{\theta}(x_{j-1}',x_j')\big|\bigg] \\
&\le \prod_{\theta \in \Xi} \prod_{k=0}^a \exp\Big(\frac{\alpha_j}{L_n(a+1)}\Big)=\exp\big(\alpha_j\big),
\end{align*}
where we have 
used the fact that $f^{(k)}(\theta;X_{t_{j-1}}^{i},X_{t_j}^{i})\varphi_{f^{(k)},\tau_n}^{\theta}(X_{t_{j-1}}^{i},X_{t_j}^{i})$ is bounded by ${\tau_n}$ from the construction of $\varphi_{f^{(k)},\tau_n}^{\theta}$, together with $\mathrm{card}(\Xi)=L_n$.
By integrating the numerator and the denominator over any measurable set $A\in\Xi_{\mathcal{Z}_j}$, and then taking the supremum, we get
\begin{equation*}
\sup_{A \in  \Xi_{\mathcal{Z}_j}}
\frac{Q_j(A |X^i_{t_j} = x_j, X^i_{t_{j-1}} = x_{j-1} )}{Q_j(A |X^i_{t_j} = x'_j, X^i_{t_{j-1}} = x'_{j-1})} \le \exp(\alpha_j).
\end{equation*}
Hence the result.
\end{proof}

%
%
%
In order to define our estimator and derive the convergence results, we must {\modar smoothly} extend the definition of
 {\modar $Z_j^{i,(0)}(\theta_\ell)=f(\theta_\ell;X_{t_{j-1}}^{i},X_{t_j}^{i})
 	\varphi \left( f(\theta_\ell;X_{t_{j-1}}^{i},X_{t_j}^{i}) / \tau_n \right) +\mathcal{E}_j^{i,\ell,{\modch(0)}} $ from} the grid $\Xi$ to the entire space $\Theta$.
{\modch Indeed, recalling the contrast function without privacy is as in  \eqref{Eq_Contrast_function}, one can easily see} it will enable us to define a smooth contrast function by summing these extensions on $i,j$.
 
  The extension is achieved through the application of a spline approximation method, which is comprehensively discussed in Section \ref{s: splines}. More specifically, starting with the initial definition of $Z_j^{i}(\theta)$, valid for all $\theta \in \Xi$, we will construct its spline approximation of order $a$, denoted as ${\modar H_\Xi} Z_j^{i}(\theta)$. The extended function is well-defined for any $\theta$ and coincides with $Z_j^{i, (0)}(\theta)$ on $\Xi$. {\modar More formally, the spline extension $H_\Xi$ is given by a linear operator
\begin{equation} \label{E: interpol spline avant}
	H_\Xi: 
	\left\{ 
	\begin{aligned}
	   \mathbb{R}^{L_n\times(a+1)}  &\to \mathcal{C}^a([\theta_0,\theta_{L_n-1}])
	\\
			 (g_\ell^{\modch k})_{0\le \ell \le L_n-1,0 \le k \le a} &\mapsto (g(\theta))_{\theta \in [\theta_0,\theta_{L_n-1}]}
	\end{aligned}
\right.
\end{equation}
where the function $g$ is such that $g^{(k)}(\theta_\ell) = g_\ell^k$ for all $\theta_\ell \in \Xi$.}
A thorough elucidation of the process involved in creating such a function can be found in Section \ref{s: splines}. 
\noindent
The natural contrast function in this context, defined for $\theta\in\Xi$, is
$S_n^N(\theta):=\sum_{i=1}^N\sum_{j=1}^nZ_j^{i{\modar ,(0)}}(\theta)$.
Its extension to {\modar $[\theta_0,\theta_{L_n-1}]$} 
	is given by
{\modar 
\begin{equation}\label{Eq_definition_wtSN} 
S^{N,\text{pub}}_n (\theta) :=\sum_{i=1}^N\sum_{j=1}^n H_{\modar \Xi} \left( Z_j^{i} \right)(\theta)=
\sum_{i=1}^N\sum_{j=1}^n H_{\modar \Xi} \left( (Z_j^{i,(k)}(\theta_\ell))_{\ell,k} \right)  (\theta),
 \qquad \theta \in [\theta_0,\theta_{L_n-1}].	
\end{equation}
}
\noindent
With access to public data $(Z_j^{i}(\theta))_{1\leq i\leq N,1\leq j\leq n}$, the statistician considers the estimator
\begin{equation}\label{Eq_estimator_theta}
	\widehat \theta_n^N=\argmax_{\theta\in{\modar [\theta_0,\theta_{L_n-1}]}}{\modar S^{N,\text{pub}}_n(\theta)}.
\end{equation}

{\modar To complete the construction of the private estimator it remains to choose the grid $\Xi$.  Let us start} considering the deterministic uniform grid
	$$\Xi:=\bigg\{\theta_\ell=\frac{\ell}{L_n}, \quad \ell\in\{0,\dots,L_n - 1\}\bigg\}.$$
		
 We will see it will be crucial for the analysis of the variance of the estimator to determine in certain occasions whether the true value of the parameter, denoted as $\theta^\star$, belongs to this grid. To address this, it will be useful in the sequel to introduce a random grid in which the probability of $\theta^\star$ coinciding with any point is zero. 
 	{\modar Let us remark that the extension of the contrast function is from 
 		the grid $\Xi$ to $[\theta_0,\theta_{L_n-1}]=[0,1-1/L_n]$ rather than on the whole parameter set $\Theta=[0,1]$. It is convenient to leave out some space on the the rightmost part of parameter space as we will introduce a random shift of size smaller than $1/L_n$ of the grid in this sequel, to create the randomization.} {\modch That is why we have assumed $\theta^\star \in \overset{\circ}{\Theta}=(0,1)$.} Remark that for the consistency one might take $\theta^\star \in \overset{\circ}{\Theta}=[0,1)$. However, in order to prove the central limit theorem, we need $\theta^*$ in the interior of the parameter set $\Theta$. 


\subsection{Main results: consistency and asymptotic normality}\label{Subsec_2 main results}
\noindent It is feasible to demonstrate that the estimator in \eqref{Eq_estimator_theta}, introduced as a natural extension of the Maximum Likelihood Estimator (MLE) in cases of discrete observations and under local differential privacy constraints, exhibits several desirable properties. Notably, it is consistent, as stated in the following theorem and proved in Section \ref{s: proof main}. \\
Let us recall that $\bar{\alpha}_2$ is the harmonic mean over the different privacy levels. It is such that $1/{\bar{\alpha}_2} = n^{-1} \sum_{j = 1}^n 1/\alpha_j^2$.

\begin{thm}[Consistency]{\label{th: consistency}}
Assume that A\ref{Ass_1}- A\ref{Ass_4} {\modar hold and that 
$\frac{L_n \log(n)}{\sqrt{N}} \sqrt{\frac{\log(L_n)}{\bar{\alpha}_2}} \rightarrow 0$ for $n, N, L_n \rightarrow \infty$.} Then,
the estimator $\widehat\theta_n^N$ defined in \eqref{Eq_estimator_theta} is consistent:
$$\widehat\theta_n^N \xrightarrow{\mathbb{P}} \theta^{\star}.$$
\end{thm}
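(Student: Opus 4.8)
The plan is to follow the classical M-estimation route for contrast functions (in the spirit of Kessler and Genon-Catalan--Jacod), but tracking carefully the extra fluctuations introduced by the Laplace noise and the spline interpolation. First I would decompose the public contrast function $S^{N,\text{pub}}_n(\theta)$ into its ``deterministic skeleton'' plus error terms. On the grid $\Xi$, $Z_j^{i,(0)}(\theta_\ell) = f^{i,\ell,(0)}_j \varphi_{\tau_n}(f^{i,\ell,(0)}_j) + \mathcal{E}_j^{i,\ell,(0)}$, so writing $S^{N,\text{pub}}_n(\theta) = H_\Xi(\cdots)(\theta)$ one gets, after dividing by $N n$,
\begin{equation*}
\frac{1}{Nn} S^{N,\text{pub}}_n(\theta) = \frac{1}{Nn}\sum_{i,j} H_\Xi\big( f^{i,\bullet,(\bullet)}_j \varphi_{\tau_n}(f^{i,\bullet,(\bullet)}_j)\big)(\theta) + \frac{1}{Nn}\sum_{i,j} H_\Xi\big(\mathcal{E}_j^{i,\bullet,(\bullet)}\big)(\theta).
\end{equation*}
The first step is to show that the first term converges in probability, uniformly in $\theta$, to a deterministic limit $\mathcal{K}(\theta^\star,\theta)$ which, up to a sign and an additive $\theta$-independent term, equals $-\frac12\int_0^T\E[(b(\theta,X_s)-b(\theta^\star,X_s))^2/\sigma^2(X_s)]ds$; by Assumption A\ref{Ass_4} this is uniquely maximised at $\theta^\star$. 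This requires three sub-arguments: (i) the truncation $\varphi_{\tau_n}$ is asymptotically harmless since $f^{i,\ell,(0)}_j = 2b(\theta_\ell,X^i_{t_{j-1}})(X^i_{t_j}-X^i_{t_{j-1}}) - \Delta_n b^2(\cdots)$ is $O_p(\sqrt{\Delta_n})$ while $\tau_n = \sqrt{\Delta_n}\log n$, so with overwhelming probability no truncation occurs (use the Lipschitz/boundedness Assumptions A\ref{Ass_1}, A\ref{Ass_2} and Gaussian-type tail bounds on the increments, plus a union bound over $i,j$ costing a $\log$ factor absorbed by $\log n$); (ii) the spline operator $H_\Xi$ is a bounded interpolation operator that reproduces smooth functions up to an error controlled by $L_n^{-1}$ times a modulus of smoothness (this should be quoted from Section~\ref{s: splines}), so $H_\Xi$ applied to the (smooth in $\theta$) conditional-expectation part is close to that part; (iii) a law of large numbers over the $N$ i.i.d. copies plus the standard high-frequency Riemann-sum approximation $\Delta_n\sum_j g(X^i_{t_{j-1}}) \to \int_0^T g(X^i_s)ds$ and $\sum_j g(X^i_{t_{j-1}})(X^i_{t_j}-X^i_{t_{j-1}}) \to \int_0^T g(X^i_s)b(\theta^\star,X^i_s)ds$ in probability.

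The second step is to control the noise term. Here $H_\Xi(\mathcal{E}_j^{i,\bullet,(\bullet)})(\theta)$ is, for each fixed $\theta$, a fixed linear combination $\sum_{\ell,k} c_{\ell,k}(\theta)\,\mathcal{E}_j^{i,\ell,(k)}$ of centered independent Laplace variables with scale $2\tau_n L_n (a+1)/\alpha_j$, where the spline coefficients $c_{\ell,k}(\theta)$ are bounded (stability of the spline basis, from Section~\ref{s: splines}) and at most $O(a)$ of them are nonzero for each $\theta$. Hence $\sum_{i,j} H_\Xi(\mathcal{E}_j^{i,\bullet})(\theta)$ is a sum of $\asymp Nn$ independent centered variables each of variance $\asymp \tau_n^2 L_n^2/\alpha_j^2 = \Delta_n (\log n)^2 L_n^2/\alpha_j^2$; summing the variances and using $n\Delta_n = T$ and the definition $1/\bar\alpha_2 = n^{-1}\sum_j 1/\alpha_j^2$ gives a total variance of order $N T (\log n)^2 L_n^2 /\bar\alpha_2$, i.e. standard deviation $\asymp \sqrt{N}\,L_n \log(n)/\sqrt{\bar\alpha_2}$. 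After dividing by $Nn$ and noting $n = T/\Delta_n$ this is $O_p\big( \Delta_n L_n \log(n) / (\sqrt{N}\sqrt{\bar\alpha_2})\big)$ pointwise; to get it uniformly in $\theta$ one uses a chaining/covering argument over $\theta$ (the process is Lipschitz in $\theta$ with a polynomially-bounded constant because the splines are smooth and $L_n = O(n^r)$), which injects the extra $\sqrt{\log L_n}$ factor. This is exactly why the theorem hypothesis reads $L_n^{-1}\sqrt{\log(L_n)/N}\, r_{n,N} = L_n \log(n)\sqrt{\log(L_n)}/(\sqrt{N}\sqrt{\bar\alpha_2}) \to 0$ (note $r_{n,N}=L_n^2\log(n)/\sqrt{\bar\alpha_2}$): under it the noise term vanishes uniformly.

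The third step is the standard M-estimation conclusion: set $M_n^N(\theta):= \frac{1}{Nn} S^{N,\text{pub}}_n(\theta)$; we have shown $\sup_{\theta}|M_n^N(\theta) - \mathcal{K}(\theta^\star,\theta)| \xrightarrow{\P} 0$, and $\mathcal{K}(\theta^\star,\cdot)$ has a well-separated maximum at $\theta^\star$ (strict, by the identifiability Assumption A\ref{Ass_4}); since $\widehat\theta_n^N$ is the argmax of $M_n^N$ over $[\theta_0,\theta_{L_n-1}] = [0,1-1/L_n]$, and $\theta^\star$ lies in $[0,1)$ and hence eventually inside this interval, the standard argmax/argmin continuity theorem (e.g. van der Vaart Theorem 5.7) yields $\widehat\theta_n^N \xrightarrow{\P}\theta^\star$; one minor point is to check that $\theta^\star$ is eventually in $[\theta_0,\theta_{L_n-1}]$ and that the value of $M_n^N$ cannot be larger near the missing endpoint $1$, which follows because $\mathcal{K}$ is continuous and strictly below its maximum there. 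The main obstacle is the second step: obtaining the \emph{uniform}-in-$\theta$ control of the Laplace noise after spline interpolation, because one must simultaneously (a) use the boundedness and locality of the spline coefficients, (b) handle the heavy-tailed (exponential-tailed) Laplace summands via a Bernstein-type inequality rather than a sub-Gaussian one, and (c) set up a covering of $\Theta$ fine enough that the discretization bias is negligible yet coarse enough that the union bound costs only $\log L_n$ --- this is the source of the (admittedly non-optimal) $\sqrt{\log L_n}$ in the hypothesis. The truncation-removal sub-step (i) of the first step is a close second in delicacy, since it also needs a union bound over all $Nn$ indices controlled by the $\log n$ in $\tau_n$.
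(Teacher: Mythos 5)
Your overall architecture is the same as the paper's: split the public contrast into the non-private contrast plus a truncation term plus the interpolated Laplace noise, show the truncation is inactive with overwhelming probability (using $\tau_n=\sqrt{\Delta_n}\log n$, Assumptions A1--A3 and a union bound), control the noise through the stability/locality of the spline coefficients and a Bernstein-type bound for Laplace sums (the $\sqrt{\log L_n}$ coming from the union over the grid, exactly as in the paper's Lemma on Laplace tails inside Proposition \ref{prop: approx contrast}), prove a uniform LLN for the non-private part, and conclude by the standard argmax argument with the identifiability assumption. The paper gets uniformity in $\theta$ by bounding $\sup_\theta$ of the interpolant through the sup of its $L_n(a+1)$ grid coefficients (Lemma \ref{Lem_infinite_bound_deriv_interpolation}) rather than by chaining in $\theta$, but that is a cosmetic difference.

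However, as written there is a concrete flaw: you normalize the contrast by $Nn$, and under that normalization your Step 1 is false and Step 3 breaks down. For each fixed $i$, $\sum_{j=1}^n f(\theta;X^i_{t_{j-1}},X^i_{t_j})$ is $O_P(1)$ (a discretized stochastic integral plus a Riemann sum), so $S_n^{N,0}(\theta)-S_n^{N,0}(\theta^\star)$ is of order $N$, not $Nn$; dividing by $Nn$ sends the ``signal'' to $0$, so your limit criterion $\mathcal K(\theta^\star,\cdot)$ is identically zero and has no well-separated maximum, and the argmax theorem cannot be applied. The same slip makes your noise bookkeeping internally inconsistent: after dividing by $Nn$ you correctly obtain a noise term of size $\Delta_n L_n\log(n)/\sqrt{N\bar\alpha_2}$, which is smaller by a factor $\Delta_n$ than the quantity $L_n\log(n)\sqrt{\log L_n}/\sqrt{N\bar\alpha_2}$ appearing in the hypothesis, so your claim that this computation ``is exactly why the theorem hypothesis reads'' as it does is not justified by your own bound. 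The fix is the paper's normalization: divide by $N$ only. Then $\frac1N\big(S_n^{N,0}(\theta)-S_n^{N,0}(\theta^\star)\big)\to C_\infty(\theta)=-\int_0^T\E\big[(b(\theta,X_s)-b(\theta^\star,X_s))^2/\sigma^2(X_s)\big]ds$ uniformly (note also: no factor $\tfrac12$), which is nondegenerate and maximized only at $\theta^\star$ by A4, and the noise-to-signal comparison $\sqrt N L_n\log(n)\sqrt{\log L_n}/\sqrt{\bar\alpha_2}=o(N)$ reproduces precisely the assumed rate condition. You should also say a word about the spline bias on the signal part: the interpolation error is controlled by $L_n^{-(a+1)}\sup_\theta|\partial_\theta^{a+1}S_n^{N,0}(\theta)|$, and bounding this supremum (the paper uses a Kolmogorov-criterion/Rosenthal argument to get $O_{L^p}(N)$) is a step your sketch leaves implicit.
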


\noindent {\modch Furthermore, if $\bar{\alpha}_2$ is chosen to be sufficiently large to ensure that the impact of privacy constraints is negligible, it becomes viable to regain the asymptotic normality of the estimator with a convergence rate and a variance reminiscent of the classical scenario where no privacy constraints are imposed (see for example \cite{McKean, DenDioMar, DelGenLar}). In this case the privacy-related influence becomes negligible. This assertion is formally established in the subsequent theorem whose proof is postponed to Section \ref{s: proof main}. To establish this, we introduce the following assumptions regarding the spline approximation and the discretization step.
	\begin{ass}\label{Ass_splines}[Condition spline approximation]
		\begin{enumerate}
\item \label{H: Ass_splines priv neg} Assume that $a > 3$ and that $\frac{\sqrt{N}}{L_n^{a - 2}} \rightarrow 0$.
			\item \label{H: Ass_splines priv dominate} Assume that $a > 3$ and that $\frac{\sqrt{N \bar{\alpha}_2}}{L_n^{a} \log(n)} \rightarrow 0$.
		\end{enumerate}
	\end{ass}
	\begin{ass}\label{Ass_discretization}[Condition discretization step]
		\begin{enumerate}
			\item \label{H: Ass_discret priv neg} Assume that $\sqrt{N \Delta_n} \rightarrow 0$.
			\item \label{H: Ass_discret priv dominate} Assume that $\frac{\sqrt{N \Delta_n \bar{\alpha}_2}}{L_n^{2} \log(n)} \rightarrow 0$.
		\end{enumerate}
	\end{ass}

We note that Conditions A\ref{Ass_splines}.\ref{H: Ass_splines priv neg} and A\ref{Ass_discretization}.\ref{H: Ass_discret priv neg} are essential for demonstrating asymptotic normality in scenarios where the contribution of privacy is negligible. On the other hand, A\ref{Ass_splines}.\ref{H: Ass_splines priv dominate} and A\ref{Ass_discretization}.\ref{H: Ass_discret priv dominate} are required for cases where the contribution of privacy is significant. \\
Recall that $r_{n,N} = \frac{L_n^2 \log(n)}{\sqrt{\bar{\alpha}_2}}$, as introduced in \eqref{Eq_def_rnN}. \\
We remark that A\ref{Ass_splines}.\ref{H: Ass_splines priv dominate} is equivalent to $\frac{\sqrt{N}}{L_n^{a - 2}} \frac{1}{r_{n,N}} \rightarrow 0$ and, similarly, the second point of A\ref{Ass_discretization} translates to $\sqrt{N \Delta_n} \frac{1}{r_{n,N}} \rightarrow 0$. We highlight that, in case of significant privacy, $r_{n,N}$ goes to $\infty$. It implies that Points 1 of both the Assumptions above would be enough to obtain the wanted result. We decided to require the conditions gathered in A\ref{Ass_splines}.\ref{H: Ass_splines priv dominate} and A\ref{Ass_discretization}.\ref{H: Ass_discret priv dominate} as they are weaker.}

\begin{thm}[Asymptotic normality with negligible contribution of privacy]{\label{th: as norm privacy negl}}
{\modar Assume that A\ref{Ass_1}- A\ref{Ass_5}, A\ref{Ass_splines}.\ref{H: Ass_splines priv neg} and A\ref{Ass_discretization}.\ref{H: Ass_discret priv neg} hold.} Assume moreover that {\modch $\sqrt{\log(L_n)}\,  r_{n,N} \rightarrow 0$} and $ \frac{
	{\modar L_n^{3} } \log(n)}{\sqrt{N}} \sqrt{\frac{\log(L_n)}{\bar{\alpha}_2}} \rightarrow 0$ for $N,n, L_n \rightarrow \infty$. Then
$$\sqrt{N} (\widehat\theta_n^N - \theta^{\star}) \xrightarrow{\mathcal{L}} {\mathcal N}\bigg(0,2 \Big(\int_0^T \E\bigg[\Big(\frac{\partial_\theta b(\theta^{\star}, X_s)}{{\modch \sigma}(X_s)}\Big)^2\bigg] ds \Big)^{-1}\bigg) = : {\mathcal N}\big(0,2 ( \Sigma_0 )^{-1}\big)=: \mathcal{Z}_1.$$
\end{thm}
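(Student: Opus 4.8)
The plan is to run the classical contrast‑estimator (Z‑estimator) argument, with the bulk of the work devoted to replacing the public, spline‑extended contrast $S^{N,\text{pub}}_n$ by the unobservable Euler contrast $U_n^N(\theta):=\sum_{i=1}^N\sum_{j=1}^n f(\theta;X^i_{t_{j-1}},X^i_{t_j})$ of \eqref{Eq_Contrast_function}. The hypotheses of the statement imply those of Theorem \ref{th: consistency} (since $L_n\to\infty$), so $\widehat\theta_n^N\xrightarrow{\mathbb P}\theta^\star$; as $\theta^\star\in(0,1)$ while \eqref{Eq_estimator_theta} maximises over $[\theta_0,\theta_{L_n-1}]=[0,1-1/L_n]$, on an event of probability tending to $1$ the maximiser is interior and — $S^{N,\text{pub}}_n$ being $\mathcal C^a$ with $a>3$ — satisfies $\partial_\theta S^{N,\text{pub}}_n(\widehat\theta_n^N)=0$. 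A first‑order Taylor expansion then gives, for a random $\bar\theta_n^N$ between $\widehat\theta_n^N$ and $\theta^\star$,
\begin{equation*}
\sqrt N\,(\widehat\theta_n^N-\theta^\star)=-\Bigl(\frac1N\,\partial^2_\theta S^{N,\text{pub}}_n(\bar\theta_n^N)\Bigr)^{-1}\,\frac1{\sqrt N}\,\partial_\theta S^{N,\text{pub}}_n(\theta^\star),
\end{equation*}
so it suffices to prove a CLT for $N^{-1/2}\partial_\theta S^{N,\text{pub}}_n(\theta^\star)$, convergence in probability of $N^{-1}\partial^2_\theta S^{N,\text{pub}}_n(\bar\theta_n^N)$ to a negative constant, and then invoke Slutsky.

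Since $H_\Xi$ is linear, $S^{N,\text{pub}}_n=H_\Xi\big((\sum_{i,j}Z_j^{i,(k)}(\theta_\ell))_{\ell,k}\big)$, and each knot value $\sum_{i,j}Z_j^{i,(k)}(\theta_\ell)$ equals $\partial^k_\theta U_n^N(\theta_\ell)$ plus a truncation defect $\sum_{i,j}f^{i,\ell,(k)}_j\big(\varphi_{\tau_n}(f^{i,\ell,(k)}_j)-1\big)$ plus the aggregated Laplace noise $\sum_{i,j}\mathcal E^{i,\ell,(k)}_j$. Accordingly one writes $S^{N,\text{pub}}_n=U_n^N+A_n^N+B_n^N+C_n^N$, with $A_n^N$ the order‑$a$ Hermite‑spline interpolation error of $U_n^N$, $B_n^N$ the truncation term, $C_n^N$ the noise term, and shows that $N^{-1/2}\partial_\theta A_n^N(\theta)$, $N^{-1/2}\partial_\theta B_n^N(\theta)$, $N^{-1/2}\partial_\theta C_n^N(\theta)$ together with $N^{-1}\partial^2_\theta A_n^N(\theta)$, $N^{-1}\partial^2_\theta B_n^N(\theta)$, $N^{-1}\partial^2_\theta C_n^N(\theta)$ all tend to $0$ in probability uniformly for $\theta$ in a fixed neighbourhood of $\theta^\star$. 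This uses the quantitative spline estimates of Section \ref{s: splines} — in particular that $\partial^m_\theta H_\Xi$ inflates a knot perturbation by a factor of order $L_n^m$, and that an order‑$a$ Hermite interpolant on a mesh of size $1/L_n$ approximates a smooth function and its derivatives with the corresponding powers of $L_n$ — together with the moment and concentration bounds of Section \ref{s: technical} (recall $f^{i,\ell,(k)}_j=O_{\mathbb P}(\sqrt{\Delta_n})$ uniformly, so that $\varphi_{\tau_n}$, with $\tau_n=\sqrt{\Delta_n}\log n$, is inactive off a union‑bound exceptional set). The spline and truncation pieces are absorbed by A\ref{Ass_splines}.\ref{H: Ass_splines priv neg} and A\ref{Ass_discretization}.\ref{H: Ass_discret priv neg}; the noise piece, whose once‑differentiated aggregate carries a variance of order $N\,L_n^4\log^2(n)/\bar\alpha_2=N\,r_{n,N}^2$ with $r_{n,N}$ as in \eqref{Eq_def_rnN}, is killed by $\sqrt{\log(L_n)}\,r_{n,N}\to0$ (the $\sqrt{\log L_n}$ covering a maximum over the $L_n$ knots) and by $L_n^3\log(n)N^{-1/2}\sqrt{\log(L_n)/\bar\alpha_2}\to0$.

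It then remains to treat $N^{-1/2}\partial_\theta U_n^N(\theta^\star)$ and $N^{-1}\partial^2_\theta U_n^N(\bar\theta_n^N)$. Write $\partial_\theta U_n^N(\theta^\star)=\sum_{i=1}^N\xi_i$ with $\xi_i:=\sum_{j=1}^n\partial_\theta f(\theta^\star;X^i_{t_{j-1}},X^i_{t_j})$ and use $\partial_\theta f(\theta^\star;x,y)=\frac{2\partial_\theta b(\theta^\star,x)}{\sigma^2(x)}\big((y-x)-\Delta_n b(\theta^\star,x)\big)$; substituting the dynamics \eqref{Eq_EDS} replaces $(X^i_{t_j}-X^i_{t_{j-1}})-\Delta_n b(\theta^\star,X^i_{t_{j-1}})$ by the martingale increment $\int_{t_{j-1}}^{t_j}\sigma(X^i_s)\,dW^i_s$ up to an $L^2$‑remainder of total order $\sqrt N\,\Delta_n$, negligible since $\sqrt{N\Delta_n}\to0$ (A\ref{Ass_discretization}.\ref{H: Ass_discret priv neg}). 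The $\xi_i$ are i.i.d.\ across $i$, centred and square‑integrable, and form a triangular array to which a Lindeberg/Lyapunov CLT applies (moment conditions from A\ref{Ass_1} and the $L^p$‑bounds on $X$), with $N^{-1}\sum_i\E[\xi_i^2]$ converging — by dominated convergence and a Riemann‑sum argument, using that the conditional variance of $\int_{t_{j-1}}^{t_j}\sigma(X^i_s)\,dW^i_s$ given $X^i_{t_{j-1}}$ is $\sigma^2(X^i_{t_{j-1}})\Delta_n+o(\Delta_n)$ — to a finite positive constant; equivalently, the whole double sum is a martingale‑difference array in a lexicographic ordering of $(i,j)$ and the martingale CLT quoted in the introduction gives $N^{-1/2}\partial_\theta U_n^N(\theta^\star)\xrightarrow{\mathcal L}$ a centred Gaussian. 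For the Hessian, $\partial^2_\theta f(\theta^\star;x,y)=\frac{2\partial^2_\theta b(\theta^\star,x)}{\sigma^2(x)}\big((y-x)-\Delta_n b(\theta^\star,x)\big)-\frac{2\Delta_n(\partial_\theta b(\theta^\star,x))^2}{\sigma^2(x)}$: the first term is a martingale increment whose average over $i$ vanishes by the law of large numbers, the second averages (law of large numbers plus a Riemann sum) to a strictly negative multiple of $\Sigma_0=I(\theta^\star)$ (nonzero by A\ref{Ass_5}); the passage from $\theta^\star$ to the consistent point $\bar\theta_n^N$ uses a uniform‑in‑$\theta$ law of large numbers, whose equicontinuity follows from the uniform bounds A\ref{Ass_3}, together with the already‑obtained uniform negligibility of $N^{-1}\partial^2_\theta(A_n^N+B_n^N+C_n^N)$. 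Plugging the limits of the score and the Hessian into the displayed identity, computing the resulting variance, and applying Slutsky gives $\sqrt N(\widehat\theta_n^N-\theta^\star)\xrightarrow{\mathcal L}\mathcal N(0,2\Sigma_0^{-1})$.

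The CLT and the law of large numbers above are essentially textbook; the real difficulty is the transfer from $S^{N,\text{pub}}_n$ to $U_n^N$. One must control, uniformly on a neighbourhood of $\theta^\star$ and simultaneously, the Hermite‑spline bias against the $\sqrt{\Delta_n}$‑scale martingale fluctuations, the truncation defect, and — above all — the Laplace noise, which $\theta$‑differentiation of $H_\Xi$ inflates by a factor $L_n$ and which, aggregated over $(i,j)$, carries a variance of order $N\,r_{n,N}^2$; proving this is $o(N)$ in the score is precisely what the rate conditions $\sqrt{\log(L_n)}\,r_{n,N}\to0$, A\ref{Ass_splines}.\ref{H: Ass_splines priv neg} and A\ref{Ass_discretization}.\ref{H: Ass_discret priv neg} are calibrated to deliver, and it rests on the precise spline estimates of Section \ref{s: splines} and the concentration/moment bounds of Section \ref{s: technical}.
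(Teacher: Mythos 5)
Your proposal is correct and follows essentially the same route as the paper: the first-order condition plus Taylor expansion, the decomposition of $S^{N,\text{pub}}_n$ into the privacy-free contrast $S^{N,0}_n$ plus spline-interpolation bias, truncation defect and Laplace-noise terms controlled exactly as in Proposition~\ref{prop: approx contrast} (with $u=1$ at $\theta^\star$ for the score and $u=2$ uniformly for the Hessian, which is where the conditions A\ref{Ass_splines}.\ref{H: Ass_splines priv neg}, $\sqrt{\log(L_n)}\,r_{n,N}\to 0$ and $L_n^3\log(n)\sqrt{\log(L_n)/(N\bar\alpha_2)}\to 0$ enter), and then a CLT for the score and a uniform law of large numbers for the Hessian combined via Slutsky, as in Propositions~\ref{prop: as norm without privacy} and~\ref{prop: second derivatives}. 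The only deviations are cosmetic — a mean-value rather than integral Taylor remainder, and a CLT grouped over the i.i.d.\ individuals instead of the paper's time-indexed martingale-difference array treated via \cite{Hall_Heyde} and \cite{McKean} — both standard and equivalent here.
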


{\modch One might question the efficiency of the proposed estimator, in the scenario of 'negligible' privacy contribution. A recent paper \cite{LAN} establishes the \textit{local asymptotic normality (LAN) property} for drift estimation in $d$-dimensional McKean-Vlasov models under continuous observations. If we narrow down their result to dimension 1, with a single-dimensional parameter $\theta$, the Fisher information derived by the authors can be expressed as follows
\begin{equation*}
\int_0^T \int_{\R} \Big(\frac{\partial_{\theta}b}{{\modch \sigma}}\Big)^2 (\theta^\star, {t}, x, \bar \mu_t) \bar \mu_t (dx) dt,
\end{equation*}
where $\bar \mu_t = \bar \mu_t^{\theta^\star}$ is the law of the McKean-Vlasov SDE under the true value $\theta^\star$ of the parameter. In the case of i.i.d. diffusion processes under consideration, the above quantity simplifies to $\int_0^T \E\left[\left(\frac{\partial_\theta b(\theta^{\star}, X_s)}{{\modch \sigma}(X_s)}\right)^2\right] ds$. It suggests that our estimator is asymptotically efficient when the privacy contribution is 'negligible'.}\\
\\
\noindent We can establish the asymptotic normality of our proposed estimator in the case where the contribution of the privacy is the dominant one as well. However, as evident from Theorem \ref{th: as norm} below, the issue of privacy significantly affects the estimation of the parameter we intend to estimate. This is demonstrated by the degradation of the convergence rate, which now becomes dependent on the privacy levels $\alpha_1, \dots , \alpha_n$. In this case, it is important to observe that we need to ensure that the various levels of privacy do not differ significantly from each other, as elaborated further below.

\begin{ass}\label{Ass_6}[Privacy ratio]
Assume that $\frac{\max_j \alpha_j}{\min_j \alpha_j} = O(1)$.
\end{ass}
\noindent
Note that this assumption is not related to privacy itself but is made to guarantee the Lindeberg condition for the CLT.
Moreover, in this context we need to introduce a random grid, as anticipated before. \\
Let us introduce a random variable, denoted as $S$, that is uniformly drawn in the interval $(0,1)$ and
the following random grid:
\begin{equation}{\label{eq: random grid}}
\Xi_S:=\bigg\{\theta_\ell=\frac{\ell + S}{L_n}, \quad \ell\in\{0,\dots,L_n - 1\}\bigg\}.   
\end{equation}
We define for $s \in [0,1]$, 
{\modar 
\begin{equation} \label{E: def hat v}
\overline{v}(s){\modch:=} (2a+1)^2 \binom{2a}{a}^2 s^{2a}(1-s)^{2a}.
\end{equation}} Let $\ell_n^\star \in \{0,\dots,L_n-1\}$ be such that $\theta^\star\in [\theta_{\ell_n^\star},\theta_{\ell_n^\star+1})$. 
 We set 
 \begin{equation}{\label{eq: def vn}}
   v_n(\theta^\star){\modch :=} \overline{v}(L_n(\theta^\star-\theta_{\ell_n^\star})).  
 \end{equation}
Remark that $v_n(\theta^\star)$ is zero if and only if $\theta^\star$ belongs to the grid $(\theta_\ell)_{0\le \ell \le L_n-1}=((\ell+S)/L_n)_{0\le \ell \le L_n-1}$. As {\modch the} shift variable $S$ has a continuous law, the probability that $\theta^\star$ lies in the grid for some $n \ge 1$ is zero. Thus, we have that, almost surely, $v_n(\theta^{\star})>0$ for all $n \ge 1$.
Moreover, the law of $v_n(\theta^{\star})$ is independent of $\theta^{\star}$ and $n$, as stated in next lemma, whose proof can be found in Section \ref{s: proof preliminary}. 
	\begin{lemma} \label{L: law random variance}
 		For all $n \ge 1$, $v_n(\theta^{\star})$ has the same law as $\overline{v}(s)$.
 	\end{lemma}

\noindent This allows us to obtain the following asymptotic result.

\begin{thm}[Asymptotic normality with significant contribution of privacy]{\label{th: as norm}}
Assume that A\ref{Ass_1}- A\ref{Ass_5}, {\modar A\ref{Ass_splines}.\ref{H: Ass_splines priv dominate}, A\ref{Ass_discretization}.\ref{H: Ass_discret priv dominate}} and A\ref{Ass_6} hold. Assume moreover that ${\modar L_n^3} \log(n) \sqrt{\frac{\log(L_n)}{ N \bar{\alpha}_2}} \rightarrow 0$ and {\modch $r_{n,N} \rightarrow \infty$} for $N, n, L_n \rightarrow \infty$. Then
$$\frac{\sqrt{N \bar{\alpha}_2}}{ 4 {\modar(a+1)} L_n^2 \log(n) \sqrt{T}} (\widehat\theta_n^N - \theta^{\star})\xrightarrow{\mathcal{L}} {\mathcal N}\big(0, \overline{v}(s)(\Sigma_0)^{-2} \big) =: \mathcal{Z}_2$$
where {\modch $\mathcal{Z}_2$ has a mixed normal distribution, with mean $0$ and conditional variance $\overline{v}(s)(\Sigma_0)^{-2}$. Moreover, this convergence holds jointly with $v_n(\theta^{\star}) \xrightarrow{\mathcal{L}} \overline{v}(s)$.}
\end{thm}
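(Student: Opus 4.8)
The plan is to analyze the maximizer $\widehat\theta_n^N$ of the smooth contrast $S_n^{N,\mathrm{pub}}$ via the usual one-step expansion around $\theta^\star$: write $0 = \partial_\theta S_n^{N,\mathrm{pub}}(\widehat\theta_n^N)$ and Taylor-expand to get
\begin{equation*}
\widehat\theta_n^N - \theta^\star = -\big(\partial_\theta^2 S_n^{N,\mathrm{pub}}(\bar\theta)\big)^{-1}\,\partial_\theta S_n^{N,\mathrm{pub}}(\theta^\star)
\end{equation*}
for some intermediate $\bar\theta$. Consistency (Theorem \ref{th: consistency}) plus the conditions on $r_{n,N}$ guarantee $\bar\theta \to \theta^\star$. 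I would then split $\partial_\theta S_n^{N,\mathrm{pub}}(\theta^\star)$ into (i) the contribution of the spline-interpolated \emph{noiseless} part $\sum_{i,j} H_{\Xi_S}\big(f^{(k)}(\theta_\ell;X^i_{t_{j-1}},X^i_{t_j})\varphi_{\tau_n}(\cdot)\big)$ and (ii) the contribution of the interpolated Laplace noise $\sum_{i,j} H_{\Xi_S}\big((\mathcal E_j^{i,\ell,(k)})_\ell\big)$. Part (i), after controlling the truncation $\varphi_{\tau_n}$ (which only removes an event of negligible probability since $f^{i,\ell,(k)}_j = O_{\mathbb P}(\sqrt{\Delta_n})$ by the Euler structure of $f$) and the spline interpolation error (governed by A\ref{Ass_splines}.\ref{H: Ass_splines priv dominate} and the spline bounds from Section \ref{s: splines}), reproduces exactly the non-private score $\sum_{i,j}\partial_\theta f(\theta^\star;X^i_{t_{j-1}},X^i_{t_j})$, which is $O_{\mathbb P}(\sqrt N)$ — hence \emph{negligible} relative to the privacy term. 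The same analysis gives $\partial_\theta^2 S_n^{N,\mathrm{pub}}(\bar\theta) = -N\,\Sigma_0 + o_{\mathbb P}(N\cdot \text{(privacy scale)})$ after normalization; this is where Assumption \ref{Ass_5} ($I(\theta)>0$) and A\ref{Ass_discretization}.\ref{H: Ass_discret priv dominate} enter.

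The heart of the proof is the noise term (ii). After the spline operator $H_{\Xi_S}$ and differentiation in $\theta$ evaluated at $\theta^\star\in[\theta_{\ell_n^\star},\theta_{\ell_n^\star+1})$, the interpolated noise becomes a linear combination of the $\mathcal E_j^{i,\ell,(k)}$ with deterministic coefficients that depend on the fractional position $L_n(\theta^\star-\theta_{\ell_n^\star})=s$; the key algebraic fact (to be extracted from the spline formulas) is that the sum of squared coefficients equals precisely $\overline v(s)/L_n^{?}$ up to the explicit constants $(2a+1)^2\binom{2a}{a}^2$ appearing in \eqref{E: def hat v}. Writing this term as a triangular array $\sum_{i=1}^N \xi_{i,n}$ with $\xi_{i,n}$ i.i.d. (over $i$) conditionally on $S$, centered, with conditional variance $\propto \frac{L_n^4\log^2 n}{\bar\alpha_2}\,\overline v(s)$ (the $\tau_n^2 L_n^2(a+1)^2/\alpha_j^2$ from \eqref{E: law Epsilon}, summed over $j$ via the harmonic mean $1/\bar\alpha_2$, times $L_n^2$ from the $L_n$ grid coefficients), I would apply a martingale/triangular-array CLT. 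Lindeberg's condition is checked using the finite moments of the Laplace law together with Assumption \ref{Ass_6}, which forces $\alpha_j^{-2}\lesssim \bar\alpha_2^{-1}$ uniformly so that no single $j$ dominates; the normalization $\sqrt{N\bar\alpha_2}/(4(a+1)L_n^2\log(n)\sqrt T)$ is exactly what makes the limiting conditional variance $\overline v(s)$. Conditionally on $S$ (equivalently on $v_n(\theta^\star)$, whose law is identified in Lemma \ref{L: law random variance}), the limit is $\mathcal N(0,\overline v(s))$, and since the non-private score is asymptotically independent of the noise and the denominator converges to $\Sigma_0$, Slutsky gives $\mathcal N(0,\overline v(s)(\Sigma_0)^{-2})$; the joint convergence with $v_n(\theta^\star)\xrightarrow{\mathcal L}\overline v(s)$ follows because $v_n(\theta^\star)$ is a measurable function of $S$ and the CLT is carried out conditionally on $S$.

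I expect the main obstacle to be twofold. First, \emph{pinning down the exact constant} $\overline v(s)$: one must compute the $\theta$-derivative at a generic interior point of a grid cell of the order-$a$ spline interpolant built from the data $(g_\ell^k)_{k\le a}$, identify the resulting coefficient vector acting on $(\mathcal E_j^{i,\ell,(k)})$, and evaluate its squared $\ell^2$-norm — this is the calculation that produces $(2a+1)^2\binom{2a}{a}^2 s^{2a}(1-s)^{2a}$ and must be done carefully using the spline machinery of Section \ref{s: splines}. Second, \emph{showing the deterministic (noiseless, truncated, spline-extended) part does not interfere}: one must prove that replacing $f^{(k)}(\theta_\ell;\cdot)$ by its truncation $f^{(k)}\varphi_{\tau_n}(f^{(k)}/\tau_n)$ and then spline-interpolating introduces an error in $\partial_\theta S_n^{N,\mathrm{pub}}(\theta^\star)$ that is $o_{\mathbb P}(L_n^2\log(n)\sqrt{N/\bar\alpha_2})$ — this is precisely where the extra conditions $L_n^3\log(n)\sqrt{\log(L_n)/(N\bar\alpha_2)}\to0$, A\ref{Ass_splines}.\ref{H: Ass_splines priv dominate}, and A\ref{Ass_discretization}.\ref{H: Ass_discret priv dominate} are consumed, and where the uniform-in-$\theta$ bounds on $\partial_\theta^k b$ (Assumption \ref{Ass_3}) and the moment bounds on increments of $X^i$ are needed.
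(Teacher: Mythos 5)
Your plan is correct and follows essentially the same route as the paper: Taylor expansion of the score, decomposition of $\partial_\theta S_n^{N,\mathrm{pub}}(\theta^\star)$ into the spline-interpolated Laplace noise (whose derivative at an interior grid point is computed explicitly via the spline machinery, producing $\overline v(s)$ through the identity of Lemma \ref{l: spline g}), a triangular-array CLT with Lindeberg checked via A\ref{Ass_6}, joint convergence with $v_n(\theta^\star)$ from independence of the noise and the shift $S$, and negligibility of the truncated, spline-extended non-private score under $r_{n,N}\to\infty$ together with A\ref{Ass_splines}.\ref{H: Ass_splines priv dominate} and A\ref{Ass_discretization}.\ref{H: Ass_discret priv dominate}. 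The only refinements in the paper's execution are that the higher-shape spline coefficients ($\nu\ge 1$) are shown to be asymptotically negligible rather than contributing exactly to $\overline v(s)$, and that the score's discretization bias (handled by A\ref{Ass_discretization}.\ref{H: Ass_discret priv dominate}) is controlled in the first-derivative analysis rather than only in the second-derivative step.
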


{\modch Theorems \ref{th: as norm privacy negl} and \ref{th: as norm} provide insights into how the proposed estimator behaves with varying contributions of privacy, particularly concerning the convergence of $\sqrt{\log(L_n)} r_{n,N}$ towards $0$ or those of $r_{n,N}$ towards $\infty$. As already mentioned in the introduction, the asymptotic behaviour of $r_{n,N}$ dictates the privacy regime. In the corollary below we explore the scenario where privacy levels $(\alpha_1, \dots , \textcolor{black}{\alpha_n})$ satisfy {\modar $ r_{n,N} \sim 1/c_p >0$.} Its proof is presented in Section \ref{s: proof main}.}

\begin{cor}{\label{cor: threshold}}
{\modch Assume that A\ref{Ass_1}- A\ref{Ass_5}, {\modar A\ref{Ass_splines}.\ref{H: Ass_splines priv dominate}, A\ref{Ass_discretization}.\ref{H: Ass_discret priv dominate}} and A\ref{Ass_6} hold. Assume moreover that ${\modar L_n^3} \log(n) \sqrt{\frac{\log(L_n)}{ N \bar{\alpha}_2}} \rightarrow 0$ and \textcolor{black}{there exists $c_p>0$} such that $r_{n,N} \rightarrow 1/c_p$ for $N, n, L_n \rightarrow \infty$. Then
$$\frac{\sqrt{N \bar{\alpha}_2}}{ {\modar  L_n^2 \log(n) }} (\widehat\theta_n^N - \theta^{\star})\xrightarrow{\mathcal{L}}  c_p \mathcal{Z}_1 + 4(a + 1)\sqrt{T}
\mathcal{Z}_2 = {\mathcal N}\big(0, (2c_p^2 \Sigma_0 +  {\modar 16(a+1)^2T} \overline{v}(s))(\Sigma_0)^{-2} \big) $$
and this convergence holds jointly with $v_n(\theta^{\star}) \xrightarrow{\mathcal{L}} \overline{v}(s)$. The random variables $\mathcal{Z}_1$ and $\mathcal{Z}_2$ are independent.} 
\end{cor}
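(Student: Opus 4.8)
\noindent The plan is to fuse the two asymptotic expansions used to prove Theorems~\ref{th: as norm privacy negl} and~\ref{th: as norm}, taking advantage of the fact that in the threshold regime $r_{n,N}\to 1/c_p$ the two normalisations $\sqrt N$ and $\sqrt{N\bar{\alpha}_2}/(L_n^2\log n)$ differ only by the bounded factor $r_{n,N}$. First I would observe that, because $r_{n,N}$ stays bounded away from $0$ and $\infty$, the hypotheses assumed here contain all the hypotheses invoked in the proofs of both theorems: A\ref{Ass_splines}.\ref{H: Ass_splines priv dominate} is equivalent to $\sqrt N/(L_n^{a-2}r_{n,N})\to 0$, hence to A\ref{Ass_splines}.\ref{H: Ass_splines priv neg}, and likewise A\ref{Ass_discretization}.\ref{H: Ass_discret priv dominate} is equivalent to A\ref{Ass_discretization}.\ref{H: Ass_discret priv neg}; moreover $L_n^3\log(n)\sqrt{\log(L_n)/(N\bar{\alpha}_2)}\to 0$ implies both the rate required for consistency in Theorem~\ref{th: consistency} and the extra condition $\tfrac{L_n^3\log(n)}{\sqrt N}\sqrt{\log(L_n)/\bar{\alpha}_2}\to 0$ of Theorem~\ref{th: as norm privacy negl}. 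In particular $\widehat\theta_n^N\xrightarrow{\mathbb{P}}\theta^\star$ and every technical estimate from Sections~\ref{s: splines} and~\ref{s: technical} used in the two central limit proofs is at our disposal.

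\noindent Second, I would carry over, from those two proofs, the Taylor expansion of the contrast around $\theta^\star$, which yields
\[
\widehat\theta_n^N-\theta^\star \;=\; \frac{A_n}{\sqrt N} \;+\; \frac{4(a+1)L_n^2\log(n)\sqrt T}{\sqrt{N\bar{\alpha}_2}}\,\widetilde D_n \;+\; \mathcal R_n,
\]
where $A_n$ collects the contribution of the true diffusion increments (the noise-free part of $\partial_\theta S^{N,\text{pub}}_n(\theta^\star)$ divided by the normalised Hessian $N^{-1}\partial_\theta^2 S^{N,\text{pub}}_n$, which converges in probability to the same deterministic limit as in the two theorems), $\widetilde D_n$ collects the contribution of the Laplace variables $\mathcal E^{i,\ell,(k)}_j$ divided by that same Hessian, and $\mathcal R_n$ gathers the truncation error produced by $\varphi$, the spline interpolation error and the Euler discretisation bias. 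By the proof of Theorem~\ref{th: as norm privacy negl} (whose non-privacy part is insensitive to the regime), $A_n\xrightarrow{\mathcal L}\mathcal Z_1$; by the proof of Theorem~\ref{th: as norm}, $\widetilde D_n\xrightarrow{\mathcal L}\mathcal Z_2$ jointly with $v_n(\theta^\star)\xrightarrow{\mathcal L}\overline v(s)$; and the bounds established there give $\mathcal R_n=o_{\mathbb{P}}\!\big(L_n^2\log(n)/\sqrt{N\bar{\alpha}_2}\big)$. Multiplying the displayed identity by $\sqrt{N\bar{\alpha}_2}/(L_n^2\log n)$ and using $1/r_{n,N}\to c_p$ reduces the claim, via Slutsky's lemma and the continuous mapping theorem, to establishing the joint convergence $(A_n,\widetilde D_n,v_n(\theta^\star))\xrightarrow{\mathcal L}(\mathcal Z_1,\mathcal Z_2,\overline v(s))$ with $\mathcal Z_1$ independent of $(\mathcal Z_2,\overline v(s))$; the resulting limit $c_p\mathcal Z_1+4(a+1)\sqrt T\,\mathcal Z_2$ is then, conditionally on $s$, centred Gaussian with variance $2c_p^2\Sigma_0^{-1}+16(a+1)^2T\,\overline v(s)\Sigma_0^{-2}=(2c_p^2\Sigma_0+16(a+1)^2T\,\overline v(s))\Sigma_0^{-2}$.

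\noindent Third, for this joint statement I would condition on the $\sigma$-field $\mathcal H$ generated by everything except the privatising noise, i.e.\ by $S$, the Brownian motions $W^1,\dots,W^N$ and the initial values. Both $A_n$ and $v_n(\theta^\star)$ are $\mathcal H$-measurable, and, given $\mathcal H$, $\widetilde D_n$ is a weighted sum of the independent Laplace variables $\mathcal E^{i,\ell,(k)}_j$ with $\mathcal H$-measurable coefficients. A conditional central limit theorem for this sum --- whose Lindeberg condition is guaranteed by Assumption~\ref{Ass_6} and whose conditional variance, by the spline-derivative computation carried out in the proof of Theorem~\ref{th: as norm}, converges to $v_n(\theta^\star)\Sigma_0^{-2}$ --- shows that $\mathbb E[e^{\mathrm i t\widetilde D_n}\mid\mathcal H]-\exp(-\tfrac{t^2}{2}v_n(\theta^\star)\Sigma_0^{-2})\to 0$ in $L^1$ for each $t$. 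Since $A_n$ is a functional of the Brownian motions while $v_n(\theta^\star)$ is a functional of the shift $S$ alone, these two sequences are independent for every $n$; combined with $A_n\xrightarrow{\mathcal L}\mathcal Z_1$ and with Lemma~\ref{L: law random variance} (which gives that $v_n(\theta^\star)$ has the fixed law of $\overline v(s)$), this yields $(A_n,v_n(\theta^\star))\xrightarrow{\mathcal L}(\mathcal Z_1,\overline v(s))$ with independent marginals. Inserting this together with the conditional characteristic-function limit into $\mathbb E[e^{\mathrm i uA_n+\mathrm i t\widetilde D_n}g(v_n(\theta^\star))]=\mathbb E[e^{\mathrm i uA_n}g(v_n(\theta^\star))\,\mathbb E[e^{\mathrm i t\widetilde D_n}\mid\mathcal H]]$ produces, in the limit, $\mathbb E[e^{\mathrm i u\mathcal Z_1}]\cdot\mathbb E[g(\overline v(s))e^{-\frac{t^2}{2}\overline v(s)\Sigma_0^{-2}}]$, which is exactly the joint characteristic transform of $(\mathcal Z_1,\mathcal Z_2,\overline v(s))$ with $\mathcal Z_1\perp(\mathcal Z_2,\overline v(s))$. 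This finishes the argument.

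\noindent\textbf{Main obstacle.} The delicate step is the joint limit of the third paragraph: one must run a (stable, conditional) central limit theorem for $\widetilde D_n$ while keeping track of the fact that its conditional variance is the \emph{non-convergent}, but fixed-law, quantity $v_n(\theta^\star)\Sigma_0^{-2}$, and then splice this together with the diffusion CLT for $A_n$ --- the independence of the two limits being inherited, ultimately, from the independence of the Laplace noise $\mathcal E^{i,\ell,(k)}_j$ and of the random shift $S$ from the driving Brownian motions. A more routine, but necessary, point is the verification in the first paragraph that, for bounded $r_{n,N}$, the ``significant-privacy'' and ``negligible-privacy'' forms of Assumptions~\ref{Ass_splines} and~\ref{Ass_discretization} coincide, so that all the remainder estimates used at the a priori incompatible scales of the two theorems are simultaneously valid at the common scale $\sqrt{N\bar{\alpha}_2}/(L_n^2\log n)\asymp\sqrt N$.
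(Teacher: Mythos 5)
Your proposal is correct and follows essentially the same route as the paper: the same Taylor expansion of $\partial_\theta S^{N,\text{pub}}_n(\theta^\star)$ around $\theta^\star$, the same splitting into the diffusion-driven part (yielding $c_p\mathcal Z_1$ via the unchanged non-privacy CLT, since $\sqrt{\bar\alpha_2}/(L_n^2\log n)\to c_p$) and the Laplace part $H^0(\bm{\mathcal E})$ (yielding $4(a+1)\sqrt T\,\mathcal Z_2$ jointly with $v_n(\theta^\star)$), the same observation that the two forms of A\ref{Ass_splines} and A\ref{Ass_discretization} coincide when $r_{n,N}$ is bounded, and the same normalisation by the Hessian via Proposition \ref{prop: second derivatives}. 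Your third paragraph merely spells out, via conditional characteristic functions, the joint-convergence and independence step that the paper disposes of in one line by invoking the independence of the Laplace noise from the processes $(X^i)_i$.
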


{\modch Corollary \ref{cor: threshold} emphasizes the crucial roles played by both $\mathcal{Z}_1$ and $\mathcal{Z}_2$ in our analysis. The convergence rate of the proposed estimator towards the sum reveals that, in the scenario of negligible privacy, the convergence is towards $\mathcal{Z}_1$, whereas in the case of a significant contribution of privacy, the convergence is towards $\mathcal{Z}_2$.}\\
\\
{ An intriguing question emerges: How can one thoughtfully choose the parameter $L_n$ while ensuring its compliance with the aforementioned conditions? It is noteworthy that $L_n = O(n^r)$, and $r$ arbitrarily small is a feasible choice. In this case, the choice of the order of the spline approximation $a$ becomes pivotal, requiring it to be sufficiently large to ensure that $a r$ satisfies the assumptions outlined in A\ref{Ass_splines}.}\\
\\
\textcolor{black}{Extending our privacy analysis to multidimensional diffusion is theoretically possible, but it would lead to much more tedious computations, making the paper less readable. Additionally, using higher-dimensional spline approximations would create cumbersome notation (see the remark in the Appendix \ref{s: splines}), limiting our analysis to the one-dimensional case for the parameter $\theta$. To avoid unnecessary complexity and ensure clarity, we focus on the unidimensional case for both the parameter and diffusion, allowing readers to better understand the impact of the privacy constraint. The extension to the multidimensional case is left for future research.}\\
\\
\noindent \textcolor{black}{One might wonder why it is worthwhile to study statistical inference on the drift in the negligible privacy regime, especially when this regime appears unsuitable for achieving privacy goals. However, it is important to emphasize that the results in Corollary \ref{cor: threshold} provide a critical threshold between negligible and significant privacy. This helps in choosing the level of privacy. Recall that increasing privacy generally hampers statistical recovery, and the reverse is true as well. Therefore, it becomes crucial to identify a balance that offers a trade-off between statistical utility and privacy protection. By analyzing statistical inference in the negligible privacy regime, we can determine the maximum level of privacy that still allows for sufficient information recovery, ensuring that the resulting estimator is consistent and asymptotically Gaussian—essentially performing as well as if no privacy were imposed on the system.}
\\

\subsection{Particular case: drift as a polynomial function of $\theta$}
\textcolor{black}{This section focuses on the special case where the drift is a polynomial function of \(\theta\). A key aspect of our analysis involves approximating the contrast function from the grid \(\Xi\) to the entire space \(\Theta\) using spline approximations. Importantly, Hermite approximations preserve polynomial functions, which motivates our interest in studying drift functions of the form \(b(\theta, x) = b_1(\theta) b_2(x)\), where \(b_1\) is a polynomial of degree at most \(a\). \\
In this case, we can derive a sharper bound for the error introduced by moving from the contrast function to the classical one in the setting without privacy. This leads to a significant simplification: the size of the grid can now remain constant, i.e., \(L_n = L\). As a result, the condition previously stating that perfect privacy is not allowed simplifies to \(\frac{\log n}{\sqrt{N \bar{\alpha}_2}} \to 0\). Additionally, the conditions outlined in Assumption A\ref{Ass_splines}, which were previously required for the asymptotic normality of the estimator, are no longer necessary. \\
In particular, when the drift is a polynomial function of \(\theta\), the consistency and asymptotic normality results in Theorems \ref{th: consistency}, \ref{th: as norm privacy negl}, and \ref{th: as norm} are adjusted accordingly. }

\begin{thm}{\label{th: pol drift}}
{\rev Assume that \(b(\theta, x) = b_1(\theta) b_2(x)\), where \(b_1\) is a polynomial of degree at most \(a\), that A\ref{Ass_1}-A\ref{Ass_4} hold and that \(\frac{\log n}{\sqrt{N \bar{\alpha}_2}} \to 0\). Then, the following statements hold:
\begin{itemize}
    \item[(i)] The estimator \(\hat{\theta}_n^N\) is consistent in probability.
    \item[(ii)] If \(\frac{\log n}{\sqrt{\bar{\alpha}_2}} \to 0\), and A\ref{Ass_5} and A\ref{Ass_discretization}.1 are also satisfied, then
    \[
    \sqrt{N} (\widehat\theta_n^N - \theta^{\star}) \xrightarrow{\mathcal{L}} \mathcal{Z}_1,
    \]
    where \(\mathcal{Z}_1\) is as defined in Theorem \ref{th: as norm privacy negl}.
    \item[(iii)] If \(\frac{\log n}{\sqrt{\bar{\alpha}_2}} \to \infty\), and A\ref{Ass_5}, A\ref{Ass_discretization}.2, and A\ref{Ass_6} hold, then
    \[
    \frac{\sqrt{N \bar{\alpha}_2}}{\log(n)} (\widehat\theta_n^N - \theta^{\star}) \xrightarrow{\mathcal{L}} \Gamma \mathcal{Z}_2,
    \]
    where \(\mathcal{Z}_2\) is as defined in Theorem \ref{th: as norm}, and \(\Gamma := \frac{4(a + 1)}{\sqrt{T} L^2}\).
\end{itemize}}
\end{thm}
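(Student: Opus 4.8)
The plan is to reduce Theorem~\ref{th: pol drift} to the already-established Theorems~\ref{th: consistency}, \ref{th: as norm privacy negl} and \ref{th: as norm}, exploiting the one structural gain in the polynomial case: if $b(\theta,x)=b_1(\theta)b_2(x)$ with $\deg b_1 \le a$, then $\theta \mapsto f(\theta;x,y)$ is a polynomial in $\theta$ of degree at most $2a$, and more importantly each $\theta\mapsto f^{(k)}(\theta_\ell;X_{t_{j-1}}^i,X_{t_j}^i)$ lies in a finite-dimensional polynomial space. Since the Hermite spline operator $H_\Xi$ of order $a$ reproduces polynomials of degree $\le 2a+1$ exactly on each subinterval (and globally, given that the derivative data up to order $a$ are matched at every node), the spline extension of the truncated contrast introduces \emph{no} approximation error of the usual $L_n^{-(a-2)}$ type; the only discrepancy between $S_n^{N,\mathrm{pub}}$ and the classical contrast \eqref{Eq_Contrast_function} comes from the truncation $\varphi_{\tau_n}$ and the Laplace noise $\mathcal E_j^{i,\ell,(k)}$. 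First I would make this exactness statement precise, isolating it as the key lemma: on the event where all relevant $f^{i,\ell,(k)}_j$ are bounded by $\tau_n$ in absolute value (so $\varphi_{\tau_n}\equiv 1$), one has $H_\Xi\big((f^{i,\ell,(k)}_j)_{\ell,k}\big)(\theta) = f(\theta;X_{t_{j-1}}^i,X_{t_j}^i)$ pointwise in $\theta$, because a polynomial of degree $\le 2a+1$ is its own Hermite interpolant. Controlling the probability of the complementary (truncation) event is routine via Assumption~\ref{Ass_1}, \ref{Ass_3}, moment bounds on the increments, and the definition $\tau_n=\sqrt{\Delta_n}\log n$, exactly as in the proofs of the general theorems.

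Next I would track how this exactness removes the $L_n$-dependence from every error term. In the general analysis the terms carrying $L_n^2\log n$ or $L_n^{a-2}$ arise from (a) the spline approximation error and (b) the interplay of the grid spacing $1/L_n$ with the Laplace scale $2\tau_n L_n(a+1)/\alpha_j$. In the polynomial case (a) vanishes, so we may freeze $L_n=L$ constant, and the bias term in the consistency proof now reads, schematically, $\frac{1}{N n}\sum_{i,j}(\text{noise}+\text{truncation remainder})$, whose fluctuation is of order $\tau_n/(\alpha_j\sqrt{N})$ up to constants depending on $L$ and $a$. This is exactly what makes the hypothesis collapse to $\log n/\sqrt{N\bar\alpha_2}\to 0$ for (i): indeed $\tau_n = \sqrt{\Delta_n}\log n$ and $\Delta_n = T/n$, so $\tau_n^2/\bar\alpha_2 \cdot N^{-1}\asymp \log^2 n/(n N\bar\alpha_2)$, and after the appropriate normalization one recovers the stated condition. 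For (ii) and (iii) I would re-run the central-limit arguments of Theorems~\ref{th: as norm privacy negl} and \ref{th: as norm} verbatim, observing that the martingale-difference triangular array decomposition into a ``signal'' part (converging to $\Sigma_0$-type quantities) and a ``privacy noise'' part (the rescaled sum of Laplace variables) is unchanged; only the bookkeeping constants simplify. In case (ii), the privacy part is negligible precisely when $r_{n,N}\sqrt{\log L_n}\to 0$, which with $L_n=L$ fixed and the spline error gone becomes $\log n/\sqrt{\bar\alpha_2}\to 0$, and the limit is the same $\mathcal Z_1$. In case (iii), the privacy part dominates; the normalizing sequence in Theorem~\ref{th: as norm}, $\sqrt{N\bar\alpha_2}/(4(a+1)L_n^2\log n\sqrt T)$, with $L_n=L$ becomes $\Gamma^{-1}\sqrt{N\bar\alpha_2}/\log n$ with $\Gamma = 4(a+1)/(\sqrt T L^2)$, giving the stated limit $\Gamma\mathcal Z_2$; the random-grid device \eqref{eq: random grid} and Lemma~\ref{L: law random variance} for $v_n(\theta^\star)$ are imported without change, so that $\mathcal Z_2$ is still the mixed normal with conditional variance $\overline v(s)(\Sigma_0)^{-2}$.

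The main obstacle, and the only place genuinely new reasoning is needed, is the exact-reproduction lemma for the Hermite spline operator on polynomials. One must verify that the order-$a$ Hermite spline of the excerpt — which matches values and derivatives up to order $a$ at every node $\theta_\ell\in\Xi$ — reproduces every polynomial of degree $\le 2a+1$ \emph{globally} on $[\theta_0,\theta_{L_n-1}]$, not merely piecewise, and in particular that $f(\theta;x,y)$, being of degree $\le 2a$ in $\theta$ (since $\deg b_1\le a$ forces $\deg b_1^2 \le 2a$ and the linear term $2b_1(\theta)b_2(x)(y-x)$ has degree $\le a$), falls inside the reproducing class. This is a statement about the construction in Section~\ref{s: splines}; assuming the Hermite interpolant on each subinterval $[\theta_\ell,\theta_{\ell+1}]$ is the unique polynomial of degree $\le 2a+1$ with the prescribed endpoint data, and that a global polynomial of degree $\le 2a+1$ trivially satisfies all these interpolation conditions, uniqueness forces the spline to equal that polynomial on every piece, hence globally. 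Once this is in hand, everything else is a matter of substituting $L_n\equiv L$ into the already-proven estimates and checking that the hypotheses degrade as claimed; I would organize the write-up as: (1) the reproduction lemma; (2) control of the truncation event; (3) deduction of (i) from the proof of Theorem~\ref{th: consistency} with $L_n=L$; (4)–(5) deduction of (ii) and (iii) from the proofs of Theorems~\ref{th: as norm privacy negl} and \ref{th: as norm}, tracking constants.
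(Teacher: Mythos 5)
Your proposal is correct and takes essentially the same route as the paper: the paper likewise replaces Proposition \ref{prop: approx contrast} by a sharpened bound (its Proposition \ref{prop: approx drift pol}) in which the spline-approximation term vanishes because the Hermite interpolant reproduces the polynomial-in-$\theta$ contrast exactly — this is your reproduction lemma, which you in fact justify more carefully than the paper, noting that $f(\theta;x,y)$ has degree up to $2a$ in $\theta$ while the paper's remark only records preservation of degree $\le a$ — and then reruns the proofs of Theorems \ref{th: consistency}, \ref{th: as norm privacy negl} and \ref{th: as norm} with $L_n=L$ fixed, so that A\ref{Ass_splines} drops out and the conditions collapse to those stated. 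One small algebraic caveat in case (iii): rescaling Theorem \ref{th: as norm} by $4(a+1)L^2\sqrt{T}$ turns its normalization into $\sqrt{N\bar{\alpha}_2}/\log(n)$, so the limit obtained this way is $4(a+1)L^2\sqrt{T}\,\mathcal{Z}_2$; your identification of the normalization as $\Gamma^{-1}\sqrt{N\bar{\alpha}_2}/\log(n)$ with $\Gamma=4(a+1)/(\sqrt{T}L^2)$ inverts the factor $L^2\sqrt{T}$, although this merely mirrors the constant as printed in the theorem's statement rather than a flaw in your method.
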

\textcolor{black}{
\noindent
This example helps simplify our results by showing how we expect the convergence rates and the conditions to behave. The extra conditions and rates involving $L_n$ mainly come from the technical challenges of using the spline approximation.
Specifically, we claim that the convergence rate mentioned in the last point above—pertaining to the case of significant privacy— could be optimal, up to an extra \(\log n\) factor.\\
The proof of Theorem \ref{th: pol drift} is provided in Section \ref{App: drift}.}

{\revar 
\subsection{Convergence rate under effective privacy constraints}{\label{s: effective}}
Our main results explicitly show the effect of the privacy levels $(\alpha_j)_j$ on the estimator behaviour. In Section \ref{Ss: effective privacy v2}, we have seen that the effective level of privacy is governed by $\alpha_{\text{eff}}=\sum_{j=1}^n \alpha_j$. The condition $\alpha_{\text{eff}}=O(1)$  is a sufficient one to ensure an effective privacy of the data. For simplicity assume that  $\alpha_j=\alpha$ for all $j \in \id{1}{n}$. 
		As one would like the effective privacy level $\alpha_{\text{eff}}$ to be fixed, we choose ${\alpha} =: \frac{\alpha_{\text{eff}}}{n}$.\\
\noindent		
Observe that, in such a case, the condition $\frac{N^{-1/2}}{L_n}r_{n,N} \rightarrow 0$ needed for consistency translates to $\frac{1}{\log(L_n)(L_n \log(n))^2} N \frac{\alpha_\text{eff}^2}{n^2} \rightarrow \infty$.  If we stick to the policy where the grid size $L_n$ tends slowly to infinity and neglect its contribution, as the one of the $\log$-terms, it implies $N >> n^2$. One might wonder if that is compatible with the condition on the discretization step that puts $N$ and $n$ in a relationship, as required for the asymptotic normality of our estimator. The case of 'negligible' privacy is not compatible with the choice $\alpha = \frac{ \alpha_\text{eff}}{n}$. Hence, we turn to the study of the regime of 'significant' privacy only. In such a case, condition A\ref{Ass_discretization}.2 is implied by $N \Delta_n \frac{\alpha_{\text{eff}}^2}{n^2} \le c$ for some $c> 0$. Hence, we are asking $N \lesssim n^3$.\\
\noindent
In summary, this section helps us understand that, to achieve an effective level of privacy as performant as possible, the number of individuals $N$ providing data and the number of data per person $n$ should be linked in a way ensuring that $n^2 << N \lesssim n^3$. We also see that the choice $\alpha\sim \alpha_\text{eff}/n$ impacts the rate of estimation as $\sqrt{N\bar{\alpha}_2}\sim c\sqrt{N/n^2}$. In the best situation, corresponding to $N\sim n^3$, this rate is of magnitude at most $\sqrt{n^3/n^2}\sim N^{1/6}$, showing a slow rate of estimation, as a function of the number of individuals.

\noindent
We do not known how far is this $N^{1/6}$ rate from an optimal one.
In the context of parametric private estimation based on a $N$-sample, the rate $\sqrt{N\alpha^2}$ is known to be optimal in some models. An example is provided by the estimation of the mean from a Gaussian sample (see \cite{KalSte24}). This gives some insight to interpret the rates of convergence in Theorems \ref{th: as norm}--\ref{th: pol drift}.\\
\noindent
 The slow rate $N^{1/6}$ of our estimator is linked to the repetition of public data due to the high frequency conditions on $\Delta_n$  appearing in Assumption \ref{Ass_discretization},
 which constrains $\alpha$ to go to zero fast enough. These conditions on the step  
		are tied to the bias issue in the high-frequency contrast function method.		
%
%
%
%
It may be possible that other methods, less subject to discretization bias, could deal with a step $\Delta_n$ going slowly to zero and eventually yield to  estimation rates faster than $N^{1/6}$.\\
		
\noindent		
Our study allows the use of different levels of privacy $\alpha_j$ at different instants $t_j$. The effective privacy constraint $\sum_{j=1}^n \alpha_j=\alpha_{\text{eff}}=O(1)$ is a sufficient one to guarantee an effective level of privacy. On the other hand, when the privacy effect is dominating, the variance of our estimator is proportional to
		$\bar{\alpha}_2^{-1}=\sum_{j=1}^n (1/\alpha_j)^2$. Minimizing $\bar{\alpha}_2^{-1}$ under our effective privacy constraint yields to a choice of constant over time privacy level $\alpha=\alpha_{\text{eff}}/n$, which appears optimal in that context. 
		However, the effective privacy constraint $\sum_{j=1}^n \alpha_j=\alpha_{\text{eff}}=O(1)$ neglects any possible decorrelation in time of the components of the hidden vector $\bm{X}^i=(X^i_{t_j})_{j=1,\dots,n}$. This constraint could be possibly sharpened if the 
		data exhibit significant independence structure, as	the context of ergodic diffusions can be an example.

	}

\section{Concluding remarks}{\label{s: discussion}}
{\modch This section provides a discussion arising from the insights gained through this paper. As highlighted earlier, we introduced the concept of local differential privacy (LDP) for diffusion processes and proposed an estimator for parameter estimation of the drift coefficient of i.i.d. diffusion under LDP constraints. Our main findings include the consistency and asymptotic normality of the proposed estimator. The asymptotic normality is achieved at two distinct convergence rates, dependent on whether the term due to the privacy constraint is the primary contribution or not, leading to a dichotomy (up to a logarithmic term). Specifically, when $r_{n,N}$ (defined as in \eqref{Eq_def_rnN}) tends to $\infty$, the contribution of privacy constraints is significant. On the other hand, if $r_{n,N} \sqrt{\log(L_n)} \rightarrow 0$, privacy is negligible, and results align with those in the case without privacy. In the threshold case, where $r_{n,N}$ converges towards a constant, the proposed estimator converges to the sum of the two random variables obtained in the two previous cases. This implies that the general limit of the estimator depends on whether 'significant' or 'negligible' privacy dominates.\\
In the following table, we compare the conditions and results in the case of negligible or significant contributions of privacy, recalling that $r_{n,N} = \frac{L_n^2 \log(n)}{\sqrt{\bar{\alpha}_2}}$.

\begin{center}
\begin{tabular}{|l|l|l|}
\hline
& \textit{Negligible Privacy} & \textit{Significant Privacy}  \\
\hline
\hline
\textbf{Consistency} &  \multicolumn{2}{c|}{ }   \\
Assumptions & \multicolumn{2}{c|}{A\ref{Ass_1}- A\ref{Ass_4} } \\ 
Perfect privacy not allowed
 &  \multicolumn{2}{c|}{$\frac{N^{-1/2}}{L_n}r_{n,N} \rightarrow 0$} \\ 
\hline
\textbf{Asymptotic normality} &  & \\
Dichotomy privacy & $\sqrt{\log(L_n)} r_{n,N} \rightarrow 0$ & $ r_{n,N} \rightarrow \infty$ \\
Spline approximation A6 &$a>3$, $\frac{\sqrt{N}}{L_n^{a - 2}} \rightarrow 0$& $a>3$, $\frac{\sqrt{N \bar{\alpha}_2}}{L_n^{a} \log(n)} \rightarrow 0$ \\
Discretization A7 & $\sqrt{N \Delta_n} \rightarrow 0$ & $\frac{\sqrt{N \Delta_n \bar{\alpha}_2}}{L_n^{2} \log(n)} \rightarrow 0$ \\
Privacy ratio A8 &Not needed & Needed\\
& &\\
Result for $n,N\rightarrow\infty$ & $\sqrt{N} (\widehat\theta_n^N - \theta^{\star}) \xrightarrow{\mathcal{L}}  {\mathcal N}(0, 2 \Sigma_0^{-1})$  & $\frac{\sqrt{N \bar{\alpha}_2}}{ 4 L_n^2{\modarn (a+1)} \log(n) \sqrt{T }} (\widehat\theta_n^N - \theta^{\star})$\\
&&$\qquad\qquad\xrightarrow{\mathcal{L}} \mathcal N(0, \overline{v}(s)\Sigma_0^{-2} )$\\
\hline
\end{tabular}
\end{center}
Here, we address potential extensions and limitations of our main results. Firstly, it is important to note that in the presence of privacy constraints, the levels of privacy $\alpha_j$ are commonly assumed to be smaller than 1. Consequently, the condition of negligible privacy (i.e. $\sqrt{\log(L_n)} r_{n,N} \rightarrow 0$) is quite stringent and rarely met in practice. Therefore, the case of 'significant' privacy emerges as potentially the most interesting. 
{\modarn In such case, we obtain a convergence rate slower than $\sqrt{N\bar{\alpha}_2}$  mainly due to the necessity of introducing a grid with size $L_n$ and the subsequent need for spline approximation. It may be possible to enhance the convergence rate by adopting a different approach, at least in certain models, and the rate optimality of the estimators is an unsolved problem.}\\ 
\noindent
Another remark stems from the dependence among data related to the same individual, impacting the effective privacy. This deviation from the privacy level introduced in Section \ref{s: formulation privacy} results from the extra information carried by side channels, as detailed in Section \ref{s: effective}. This discrepancy leads to a connection between $N$ and $n$, prompting us to ponder what would happen in the case where the time horizon $T$ approaches infinity. While studying the fixed time horizon seems natural from a practical standpoint as it aligns with the model that better fits reality, from a mathematical perspective, understanding the behavior as $T$ approaches infinity in the ergodic case becomes intriguing. In this scenario, allowing for more distant (and less dependent) observations, such as in the case where the discretization step is fixed, would reduce the amount of retrievable information from the side channel, ensuring better effective privacy. \\
\\
{\modarn In conclusion, from our current knowledge this project is a first instance of  local differential privacy for continuous time stochastic processes. 
	We believe that it can be an informative starting point, {\modhel aiding} statisticians in {\modhel comprehending} some challenges {\modhel associated with local differential privacy} for evolving time-based data. {\modhel Additionally, it} offers some insights into potential directions for future research.

}

\vspace{15pt}

\appendix

\noindent
\textbf{\Large{Appendix}}

\section{Preliminary results}\label{Sec_tools}
Before proving our main results, let us introduce some notation and provide some tools that will be useful in the sequel. \\
In particular, we will start by providing a detailed introduction about spline functions.

\subsection{Splines, some tools}{\label{s: splines}}

Here are some basic refreshments on B-splines. For a more detailed exposé, the reader can refer to the work of Lyche \textit{et al.} \cite{Lyche_2017} from which our presentation is widely inspired.
\subsubsection*{Definition and properties of B-splines}
B-splines are piecewise polynomial functions characterized by :
\begin{itemize}
\item A knot vector $\bs{t}=(t_{1},\dots,t_{M})$ that is a nondecreasing sequence of $M$ elements of some interval $I\subset \R$,
\item An integer {\modar $p$ called the degree of the spline.} 
\end{itemize}
In our framework, we exclusively consider B-splines whose associated knot sequence $\bs{t}$ satisfies 
{\modar $M= (a+1)({\modar \Lambda}+3)$ for some ${\modar \Lambda},a\in\N^*$}
 and such that the knots are repeated as follows: 
\begin{equation*}
t_1= \dots= t_{2a+2},\quad {\modch t_{(\ell+1)(a+1)+1}=\dots=t_{(\ell+2)(a+1)},\;\ell\in\id{1}{{\modar \Lambda}-1}, \quad\text{and} \quad t_{(a+1)({\modar \Lambda}+1)+1}=\dots= t_{M}.}
\end{equation*}
For the sake of convenience we introduce an auxiliary sequence, i.e. the sequence of interpolation points $\bs{\xi}=(\xi_\ell,\ell\in\id{-1}{{\modar \Lambda +1}})$ built from $\bs{t}$ as
\begin{equation}\label{Eq_splines_sequence_xi}
\xi_{\ell}=:t_{(a+1)(\ell+1)+1}= \dots= t_{(a+1)(\ell+2)},\;\ell\in\id{-1}{{\modar \Lambda}+1}.
\end{equation}
Note that $\xi_{-1}=\xi_0$ and $\xi_{{\modar \Lambda}}=\xi_{{\modar \Lambda}+1}$: the points $\xi_{-1}$ and $\xi_{{\modar \Lambda}+1}$ have only been introduced for convenience.  
To sum up,
\begin{equation}{\label{eq: def t}}
\bs{t}=(\underbrace{\xi_{-1},\dots,\xi_{-1}}_{a+1\,\text{times}},\underbrace{\xi_0,\dots,\xi_{0}}_{a+1\,\text{times}},\dots,\underbrace{\xi_{{\modar \Lambda}+1},\dots,\xi_{{\modar \Lambda}+1}}_{a+1\,\text{times}}).
\end{equation}
From now on, we assume that the knots are equally spaced, i.e. for all $\ell\in\id{{\modch 0}}{{\modar \Lambda}-1}$
\begin{equation*}
\xi_{\ell+1}-\xi_{\ell}=1/{\modar \Lambda}.    
\end{equation*}
\noindent
We recall the definition of B-splines given in Lyche \textit{et al.} \cite{Lyche_2017}:
\begin{df}\label{Def_splines}
Let $h\in\id{1}{({\modar a+1})({\modar \Lambda}+1)}$. The $h$-th B-spline $B_{h,p,\bs{t}}:I\rightarrow\R$ with degree {\modar $p\le 2a+1$} is identically zero if $t_{h+p+1}=t_h$ and is otherwise defined recursively by
\begin{equation}\label{Eq_recu_spline}
B_{h,p,\bs{t}}:=\frac{\cdot-t_{\modar h}}{t_{h+p}-t_h}B_{h,p-1,\bs{t}}+\frac{t_{h+p+1}-\cdot}{t_{h+p+1}-t_{h+1}}B_{h+1,p-1,\bs{t}},
\end{equation}
starting with
\begin{displaymath}
B_{{\modch h},0,\bs{t}}:=\left\{
\begin{array}{cl}
     1& \text{on}\;[t_{\modch h},t_{{\modch h}+1})  \\
     0& \text{otherwise}.
\end{array}\right.
\end{displaymath}
\end{df}
{\modar Note that the definition of the spline $B_{h,p,\bs{t}}$ relies only on the sequence of $p+2$ knots $t_h\le t_{h+1}\le \dots \le t_{h+p+1}$ and that the 	
	support of $B_{h,p,\bs{t}}$ is $[t_h,t_{h+p+1}]$. 		
	In the sequel, unless stated otherwise, we consider B-spline of order $p=2a+1$. For 
	 the sake of readability, we will simply denote $B_{h}$ for $B_{h,2a+1,\bs{t}}$; i.e. the B-spline with order $p=2a+1$ and underlying knot sequence $\bs{t}$ defined in Equation \eqref{eq: def t}.}
It will be helpful to use the  notation: 
\begin{equation}\label{Eq_spline_start_shape_def}
{\modar B_i^k:=B_{h,2a+1,\bs{t}}, \text{ with $h=(i+1)(a+1)+k+1$ } }	
\end{equation}
where $k\in\{0,\dots,a\}$ {\modar and $i \in \{-1,\dots, {\modar \Lambda}-1 \}$.}
{\modar  Note that the support of $B_{i}^k$ is $[\xi_i,\xi_{i+2}]$.}
We will refer to {\modar $B_{i}^k$ 	as the $p$-order B-splines}
	which \textit{starts at position} $i$ and whose \textit{shape} is $k$. {\modar For $i \in \{0,\dots,{\modar \Lambda}-2\}$ the functions $B_i^k$ are $a$-differentiable, recalling that the smoothness of a spline function $B_{h,p,\bs{t}}$ is equal to the order $p$ minus the maximal multiplicity of a knot appearing in the sequence $t_h,\dots,t_{h+p+2}$. The spline functions $B_{-1}^k$ are right continuous at $\xi_0$ and $a$-differentiable on $(\xi_0,\xi_1]$. Following the convention in \cite{Lyche_2017}, to avoid the asymmetry of splines decomposition on the closed interval $[\xi_0,\xi_{\modar \Lambda}]$, we set 
 $B_{{\modar \Lambda}-1}^k(\xi_{{\modar \Lambda}})=\lim_{x \to \xi_{ \Lambda^-}} B_{{\modch \Lambda}-1}^k(x)$. With this modification, the functions $B_{{\modar \Lambda}-1}^k$ are $a$-differentiable on $[\xi_{{\modar \Lambda}-1}, \xi_{\modar \Lambda}  )$ and left continuous at $\xi_{\modar \Lambda}$.
		The spline functions $(B_i^k)_{i,k}$ restricted to $[\xi_0,\xi_{\modar \Lambda}]$ generate a linear space of piecewise polynomial functions with degree $2a+1$, regularity of class $\cC^a$ on $(\xi_0,\xi_{\modar \Lambda})$, and admitting one-sided derivatives {\modch at the boundaries} of this interval. 
	}
{\rev 
\begin{rem}
In order to estimate a parameter in $\R^d$, we would need to extend the definition of B-splines to higher dimensions. This can be done using tensor product splines, which provide a flexible way to extend splines from one dimension to multiple dimensions by combining univariate spline basis functions in each dimension through the tensor product. More precisely, we can consider the functions
\begin{equation*}
    (x_1,\dots,x_d)\mapsto \sum_{i_1=0}^{H_1}\cdots \sum_{i_d=0}^{H_d}\gamma_{i_1,\dots,i_d} B_{i_1,p,\bs t_1}(x_1)\cdots B_{i_d,p,\bs t_d}(x_d),
\end{equation*}
where the $B_{i_j,p,\bs t_j}$ are B-splines basis functions defined by a knot vector $\bs t_j=(t_1^j,\dots,t_{M}^{j})$ in the $j$-th direction, $H_j$  number of control points (or the number of basis functions) associated to the $j$-th component, and the $\gamma_{i_1,\dots,i_d}$ are the control points arranged in a $d$-dimensional grid. 
\end{rem}}
\color{black}
\vspace{2pt}
%
%
%
\begin{rem}
 As knots are equally spaced, i.e. $\xi_{\ell+1}-\xi_{\ell}=1/{\modar \Lambda}$,
 {\modar it is easy to see from \eqref{Eq_recu_spline} that the spline functions are sharing a common scaling factor ${\modar \Lambda}$. Applying Proposition 11 in \cite {Lyche_2017}, we also get
 \begin{equation}\label{Eq_derivative_spline_bound}
\sup_{x\in[\xi_0,\xi_{\modar \Lambda}]}\Big|\frac{\partial^m }{\partial x^m }B_{i}^k (x)\Big|\le c {\modar \Lambda}^m
 \end{equation}
for $m\in\{0,\dots,a\},~ i \in \{-1,\dots,\Lambda-1\},~k \in \{0,\dots,a\}$.}
\end{rem}

\subsubsection*{Hermite interpolation}
We denote $I^{\neq,{\modar \Lambda}+1}$ the set consisting of $({\modar \Lambda}+1)$-tuples {\modar $\bs{\xi}=(\xi_0,\dots,\xi_{{\modar \Lambda}})$} of $I$ {\modar with $\xi_0<\xi_1<\cdots<\xi_{\modar {\modar \Lambda}}$.} 
%
\begin{df}[Hermite interpolation]
Let $a\in\N^*$, $\bs{\xi}=(\xi_\ell,\ell\in\id{0}{{\modch \Lambda}})\in I^{\neq,{\modar \Lambda}+1}$, {\modch $\mathfrak a_\ell^{(k)} \in \R$ for $\ell\in \id{0}{ \Lambda},\,k\in \id{0}{a}$ and $\mathfrak a=(\mathfrak a_\ell^{(k)},\,\ell\in \id{0}{{\modar \Lambda}} ,\,k\in \id{0}{a})$.}
If there exists a function $H_{\bs{\xi}}(\textcolor{black}{\mathfrak a}): [\xi_0,\xi_{\modar \Lambda}]\rightarrow\R$ of class $\cC^a$ such that
\begin{equation*}
\frac{\partial^k}{\partial\xi^k}\modch H_{\bs{\xi}}({\mathfrak a})(\xi_\ell)= \mathfrak a_\ell^{(k)} \, ; \, \forall k=0,\dots,a,\,\ell=0,\dots,{\modar \Lambda},
\end{equation*}
it is called a \text{Hermite interpolation of} $\mathfrak a=(\mathfrak a_\ell^{(k)},\,\ell\in \id{0}{{\modar \Lambda}},\,k\in \id{0}{a})$ at $\bs{\xi}=(\xi_\ell,\ell\in\id{0}{{\modar \Lambda}})$ with order $a$.
\end{df}
{\modar By Mummy \cite{Mummy89}, it is possible to relate Hermite interpolation with the B-splines functions defined in 
\eqref{Eq_spline_start_shape_def}. We set $\xi_{-1}=\xi_0$, 
	$\xi_{{\modar \Lambda+1}}=\xi_{{\modar \Lambda}}$ and define 
for $-1\le i \le { \Lambda}-1$ and $0\le k \le a$, }
%
%
%
\begin{equation}\label{Eq_function_gk_def}
	g_i^{k}(x)=\frac{1}{{\modar (2a+1)!}}(x-\xi_i)^{\beta_1^k}(x-\xi_{i+1})^{\beta_2^k}(x-\xi_{i+2})^{\beta_3^k}
\end{equation}
%
with
\begin{equation*}
\beta_1^k=(a+1)-k-1=a-k,\; \beta_2^k=a+1, \;\beta_3^k=k.\\
\end{equation*}
The following result can be found in {\modar \cite{Mummy89}}.
\begin{prop}\label{Prop_Hermite_interpolation_def}
\textcolor{black}{Let $\mathfrak a=(\mathfrak a_\ell^{(k)},\,\ell\in \id{0}{ \Lambda},\,k\in \id{0}{a})$.} The function $H_{\bs{\xi}}(\mathfrak{a}):{\modar [\xi_0,\xi_{\modar \Lambda}]}\rightarrow\R$ such that
\begin{equation}\label{Eq_approx_Ha}
H_{\bs{\xi}}(\mathfrak{a})=\sum_{i=-1}^{{\modar \Lambda}-1}\sum_{k=0}^{a}c_i^kB_{i}^k,
\end{equation}
where
\begin{equation}\label{Eq_approx_cik}
c_i^k(\mathfrak{a})=\sum_{v=0}^{{\modar a}}\frac{(-1)^v}{v!}(g_i^k)^{(p-v)}(\xi_{i+1})
\mathfrak{a}_{i+1}^v,
\end{equation}
$p=2a+1$ and the $g_i^k$ are given by \eqref{Eq_function_gk_def}, is a Hermite interpolation of $\mathfrak{a}$
 with order $a$.
\end{prop}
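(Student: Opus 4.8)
The statement is the spline Hermite interpolation formula of Mummy, and the plan is to prove it via de Boor-Fix dual functionals together with Marsden's identity. Write $p=2a+1$. I would first record the structural facts already available in this section: the functions $(B_i^k)_{-1\le i\le \Lambda-1,\ 0\le k\le a}$ restricted to $[\xi_0,\xi_\Lambda]$ form a basis of the $(\Lambda+1)(a+1)$-dimensional space $\mathbb S$ of piecewise polynomials of degree $p$, of class $\cC^a$ on $(\xi_0,\xi_\Lambda)$ with one-sided derivatives at the endpoints; and the support of $B_i^k$ is $[\xi_i,\xi_{i+2}]$, the knots $\xi_i,\xi_{i+1},\xi_{i+2}$ occurring in its knot sequence with respective multiplicities $a+1-k$, $a+1$, $k+1$. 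In particular $g_i^k$ of \eqref{Eq_function_gk_def} is, up to the factor $1/p!$, exactly the product $\prod_{r=1}^{p}(\cdot-t_{h+r})$ of the $p$ interior knots of $B_i^k$, with $h$ as in \eqref{Eq_spline_start_shape_def}. Since $\dim\mathbb S$ equals the number of Hermite conditions, it is enough to check that $H_{\bs{\xi}}(\mathfrak a)\in\mathbb S$ (immediate from \eqref{Eq_approx_Ha} and the $\cC^a$ regularity of the $B_i^k$) and that $\partial_\xi^k H_{\bs{\xi}}(\mathfrak a)(\xi_\ell)=\mathfrak a_\ell^{(k)}$ for all $\ell\in\id{0}{\Lambda}$, $k\in\id{0}{a}$.

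The heart of the argument is to produce, for every $(i,k)$, a linear functional $\lambda_i^k$ on $\mathbb S$ that is biorthogonal to the basis, $\lambda_i^k(B_{i'}^{k'})=\delta_{i,i'}\delta_{k,k'}$, and that depends on a spline only through its derivatives of order $\le a$ at the single point $\xi_{i+1}$. For this I would use Marsden's identity for the knot vector \eqref{eq: def t}, $(y-x)^{p}=p!\sum_{i,k} g_i^k(y)\,B_i^k(x)$ for $x\in[\xi_0,\xi_\Lambda]$; Taylor expanding the left member in $y$ about $y=\xi_{i+1}$ and matching coefficients yields a reproduction identity $f=\sum_{i,k}\lambda_i^k(f)\,B_i^k$ — first for polynomials of degree $\le p$, then for all $f\in\mathbb S$ by locality of the coefficient functionals — where $\lambda_i^k(f)$ is the fixed linear combination of the quantities $(g_i^k)^{(p-v)}(\xi_{i+1})\,f^{(v)}(\xi_{i+1})$ appearing in \eqref{Eq_approx_cik}. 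Because $g_i^k$ vanishes to order exactly $a+1$ at $\xi_{i+1}$, one has $(g_i^k)^{(p-v)}(\xi_{i+1})=0$ whenever $v>a$, so the combination truncates to $v\in\id{0}{a}$ and involves only $f^{(0)}(\xi_{i+1}),\dots,f^{(a)}(\xi_{i+1})$, well defined on $\mathbb S\subset\cC^a$. The biorthogonality then follows from locality: $\xi_{i+1}$ is interior to $\mathrm{supp}\,B_i^k$, while for any other pair $(i',k')$ the point $\xi_{i+1}$ is either outside $\mathrm{supp}\,B_{i'}^{k'}$ or one of its endpoints, at which $B_{i'}^{k'}$ and all its derivatives of order $\le a$ vanish (the orders of vanishing at the two endpoints of $\mathrm{supp}\,B_i^k$ being $a+k\ge a$ and $2a-k\ge a$).

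Granting this, $(\lambda_i^k)$ is the dual basis of $(B_i^k)$, so every $g\in\mathbb S$ satisfies $g=\sum_{i,k}\lambda_i^k(g)\,B_i^k$; since $\lambda_i^k(g)$ uses only $\big(g^{(v)}(\xi_{i+1})\big)_{0\le v\le a}$, the Hermite evaluation map $g\mapsto\big(g^{(k)}(\xi_\ell)\big)_{\ell,k}$ is injective on $\mathbb S$, hence bijective by the dimension count. For given data $\mathfrak a$, letting $g\in\mathbb S$ be the unique interpolant, one has $\lambda_i^k(g)=c_i^k(\mathfrak a)$ by the very definition of $c_i^k$ in \eqref{Eq_approx_cik}, so $g=\sum_{i,k}c_i^k(\mathfrak a)B_i^k=H_{\bs{\xi}}(\mathfrak a)$, which is the assertion.

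The main obstacle is this middle step: verifying the biorthogonality $\lambda_i^k(B_{i'}^{k'})=\delta_{i,i'}\delta_{k,k'}$ while tracking the exact constants, which requires careful bookkeeping of the knot multiplicities (hence of the supports $[\xi_i,\xi_{i+2}]$ and of the orders of vanishing at $\xi_i,\xi_{i+1},\xi_{i+2}$) so as to recover \eqref{Eq_approx_cik} precisely, together with the boundary conventions of \cite{Lyche_2017} for $i=-1$ and $i=\Lambda-1$. A more hands-on alternative, which I would adopt if this becomes unwieldy, is to localise directly at each knot: only $B_{\ell-1}^0,\dots,B_{\ell-1}^a$ have a nonvanishing derivative of order $\le a$ at $\xi_\ell$, so the interpolation decouples into $\Lambda+1$ independent $(a+1)\times(a+1)$ systems with matrices $M_\ell=\big((B_{\ell-1}^k)^{(m)}(\xi_\ell)\big)_{0\le m,k\le a}$; one then checks that $M_\ell$ is invertible (linear independence of the $\cC^a$-jets at $\xi_\ell$ of $B_{\ell-1}^0,\dots,B_{\ell-1}^a$) and that \eqref{Eq_approx_cik} exhibits $M_\ell^{-1}$ applied to $(\mathfrak a_\ell^{(m)})_m$, which is the explicit inversion performed in \cite{Mummy89}.
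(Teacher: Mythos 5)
The paper does not actually prove this proposition: it is quoted verbatim from \cite{Mummy89}, so there is no in-paper argument to compare yours against. Your proposal supplies a genuine proof where the paper gives only a citation, and the strategy is sound: the dimension count $(\Lambda+1)(a+1)=\dim\mathbb{S}$, the de Boor--Fix dual functionals obtained by Taylor-expanding Marsden's identity at $\xi_{i+1}$, the truncation to $v\le a$ because $g_i^k$ vanishes to order exactly $a+1$ at $\xi_{i+1}$, and the biorthogonality via locality together with the vanishing orders $a+k'$ and $2a-k'$ of $B_{i'}^{k'}$ at the endpoints of its support are all correct and constitute the standard route to this result. The fallback of decoupling into $\Lambda+1$ independent $(a+1)\times(a+1)$ systems at the knots is equally valid and is essentially what \cite{Mummy89} does. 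The boundary cases $i=-1$ and $i=\Lambda-1$, where $\xi_{i+1}$ is an endpoint of the support and only one-sided derivatives exist, require the one-sided conventions you mention but cause no real difficulty.

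One concrete warning for the step where you ``match coefficients'': carrying out the computation, the $1/v!$ from the Taylor coefficient of $f$ cancels against the factor $p!/v!$ produced by differentiating $(y-x)^p$ exactly $p-v$ times, so the dual functional you obtain is $\lambda_i^k(f)=\sum_{v=0}^{a}(-1)^v (g_i^k)^{(p-v)}(\xi_{i+1})\,f^{(v)}(\xi_{i+1})$, \emph{without} the extra $1/v!$ displayed in \eqref{Eq_approx_cik}. The two expressions coincide only for $a\le 1$; already for $a=2$ one checks directly (e.g.\ on $f(x)=x^2$ with equispaced knots) that the printed formula fails to reproduce $x^2$, which would contradict the subsequent remark that $H_{\bs{\xi}}$ preserves polynomials of degree at most $a$. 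So the $1/v!$ in \eqref{Eq_approx_cik} is a typo in the paper rather than a defect of your derivation; your Marsden-identity computation produces the correct coefficients, and the rest of your argument goes through with them.
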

\noindent
{\modar Remark that although the formula \eqref{Eq_approx_Ha} defines a function on the real line, the function $H_\xi(\mathfrak{a})$ is equal to zero outside of $[\xi_0,\xi_{\modar \Lambda}]$, and it is thus generally $\cC^a$ only on the interval $[\xi_0,\xi_{\modar \Lambda}]$.}

{\modar For any function  $f \in \cC^a$, we define its projection $H_\xi f$ on the spline space by setting 
\begin{equation} \label{E: Hermite approx f}
	H_\xi f = H_\xi(\mathfrak{a}) 
	\text {\, with $\mathfrak{a}_i^{k}=
	f^{(k)}(\xi_i) $ for  $0 \le i \le {\modar \Lambda}$ and $0 \le k \le a$.}
\end{equation}
When a function $f$ is already an element of the space generated by the splines functions, then \eqref{Eq_approx_Ha}--\eqref{Eq_approx_cik} is the usual representation of splines 
given for instance in Section 1.2.4 of \cite{Lyche_2017}.}
\begin{rem}
 Hermite interpolation preserves polynomial functions with order not larger than $a$, i.e. for any polynomial function with degree $k\leq a$,
 \begin{equation*}
 {\modar    H_{\bs{\xi}}P(x)=P(x) \quad \text{ for $x \in [\xi_0,\xi_{\modar \Lambda}]$} .}
 \end{equation*}
\end{rem}
\noindent
The map $H_{\bs{\xi}}$ is linear as stated in the following lemma:
\begin{lemma}\label{Lem_linearity_spline}
Let $\phi,\psi:I\rightarrow\R$ and their B-spline interpolating functions at $\bs{\xi}$ respectively denoted by $H_{\bs{\xi}}\phi$ and $H_{\bs{\xi}}\psi$. The B-spline interpolating function of $(f+g)$ denoted by $H_{\bs{\xi}}(f+g)$ satisfies
\begin{equation*}
    H_{\bs{\xi}}(\phi+\psi)=H_{\bs{\xi}}\phi+H_{\bs{\xi}}\psi.
\end{equation*} 
\end{lemma}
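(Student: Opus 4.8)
\textbf{Proof proposal for Lemma \ref{Lem_linearity_spline}.}

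The plan is to reduce the statement to the linearity of the two operations that define $H_{\bs{\xi}}$: the extraction of the Hermite data $\mathfrak{a}$ from a function, and the assembly $\mathfrak{a}\mapsto H_{\bs{\xi}}(\mathfrak{a})=\sum_{i,k}c_i^k(\mathfrak{a})B_i^k$ given by Proposition \ref{Prop_Hermite_interpolation_def}. First I would recall from \eqref{E: Hermite approx f} that for a $\cC^a$ function $\phi$ the associated Hermite data is $\mathfrak{a}^{\phi}=(\phi^{(k)}(\xi_i))_{0\le i\le \Lambda,\,0\le k\le a}$, and similarly for $\psi$; since differentiation at a point is linear, the Hermite data of $\phi+\psi$ is exactly $\mathfrak{a}^{\phi}+\mathfrak{a}^{\psi}$ componentwise, i.e. $(\phi+\psi)^{(k)}(\xi_i)=\phi^{(k)}(\xi_i)+\psi^{(k)}(\xi_i)$. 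This is the only place where the hypothesis that $\phi,\psi\in\cC^a$ enters.

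Next I would observe that the coefficient map $\mathfrak{a}\mapsto c_i^k(\mathfrak{a})$ in \eqref{Eq_approx_cik} is linear in $\mathfrak{a}$: indeed $c_i^k(\mathfrak{a})=\sum_{v=0}^{a}\frac{(-1)^v}{v!}(g_i^k)^{(p-v)}(\xi_{i+1})\,\mathfrak{a}_{i+1}^{v}$ is a fixed linear combination of the entries of $\mathfrak{a}$, the weights $\frac{(-1)^v}{v!}(g_i^k)^{(p-v)}(\xi_{i+1})$ depending only on the knot sequence and not on $\mathfrak{a}$. Hence $c_i^k(\mathfrak{a}^{\phi}+\mathfrak{a}^{\psi})=c_i^k(\mathfrak{a}^{\phi})+c_i^k(\mathfrak{a}^{\psi})$ for every $i,k$. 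Combining this with the previous paragraph and the definition \eqref{Eq_approx_Ha},
\[
H_{\bs{\xi}}(\phi+\psi)=\sum_{i=-1}^{\Lambda-1}\sum_{k=0}^{a}c_i^k(\mathfrak{a}^{\phi}+\mathfrak{a}^{\psi})B_i^k
=\sum_{i=-1}^{\Lambda-1}\sum_{k=0}^{a}\bigl(c_i^k(\mathfrak{a}^{\phi})+c_i^k(\mathfrak{a}^{\psi})\bigr)B_i^k
=H_{\bs{\xi}}\phi+H_{\bs{\xi}}\psi,
\]
which is the claimed identity on $[\xi_0,\xi_\Lambda]$ (and both sides vanish off this interval). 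There is essentially no obstacle here: the lemma is a bookkeeping statement, and the only point to be careful about is that the uniqueness part of Hermite interpolation guarantees $H_{\bs{\xi}}$ is genuinely well-defined as a function (so that "the" interpolating function of $\phi+\psi$ makes sense), which is already supplied by Proposition \ref{Prop_Hermite_interpolation_def}. If one prefers an abstract phrasing, one can simply note that $H_{\bs{\xi}}=\Pi\circ D$ where $D:\cC^a\to\R^{(\Lambda+1)\times(a+1)}$, $D\phi=(\phi^{(k)}(\xi_i))_{i,k}$, is linear and $\Pi:\mathfrak{a}\mapsto\sum_{i,k}c_i^k(\mathfrak{a})B_i^k$ is linear by \eqref{Eq_approx_cik}, so the composition is linear; the same argument also yields $H_{\bs{\xi}}(\lambda\phi)=\lambda H_{\bs{\xi}}\phi$ for scalars $\lambda$, should it be needed elsewhere.
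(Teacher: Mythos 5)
Your proof is correct and follows essentially the same route as the paper: both arguments reduce the claim to the linearity of the coefficient map $\mathfrak{a}\mapsto c_i^k(\mathfrak{a})$ from \eqref{Eq_approx_cik} together with the additivity of the Hermite data $(\phi+\psi)^{(m)}(\xi_{i+1})=\phi^{(m)}(\xi_{i+1})+\psi^{(m)}(\xi_{i+1})$, and then conclude via the expansion \eqref{Eq_approx_Ha}. The paper carries this out by directly splitting $c_i^k(\phi+\psi)=c_i^k(\phi)+c_i^k(\psi)$, which is exactly your computation in slightly more condensed form.
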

\begin{proof}
By the characterization {\modar \eqref{Eq_approx_Ha}--\eqref{E: Hermite approx f}} of $H_{\bs{\xi}}$, we have
\begin{equation*}
H_{\bs{\xi}}(\phi+\psi)=\sum_{i=-1}^{{\modar \Lambda}-1}\sum_{k=0}^{a}c_i^k(\phi+\psi)B_{i}^k,
\end{equation*}
where
\begin{align*}
c_i^k(\phi+\psi)
&=\sum_{m=0}^{a}\frac{(-1)^m}{m!}(g_i^k)^{(p-m)}(\xi_{i+1})(\phi+\psi)^{(m)}(\xi_{i+1})\\
&=\sum_{m=0}^{a}\frac{(-1)^m}{m!}(g_i^k)^{(p-m)}(\xi_{i+1})\phi^{(m)}(\xi_{i+1})+\sum_{m=0}^{a}\frac{(-1)^m}{m!}(g_i^k)^{(p-m)}(\xi_{i+1})\psi^{(m)}(\xi_{i+1})\\
&=c_i^k(\phi)+c_i^k(\psi).
\end{align*}
Then,
\begin{equation*}
H_{\bs{\xi}}(\phi+\psi)=\sum_{i=-1}^{{\modar \Lambda}-1}\sum_{k=0}^{a}c_i^k(\phi+\psi)B_{i}^k
=\sum_{i=-1}^{{\modar \Lambda}-1}\sum_{k=0}^{a}c_i^k(\phi)B_{i}^k+\sum_{i=-1}^{{\modar \Lambda}-1}\sum_{k=0}^{a}c_i^k(\psi)B_{i}^k=H_{\bs{\xi}}(\phi)+H_{\bs{\xi}}(\psi).
\end{equation*}
Hence the result.
\end{proof}
%

\noindent The following results provide insights on the interpolation $H_{\bs{\xi}}$. 

\begin{lemma}\label{Lem_infinite_bound_deriv_interpolation}
For all collections $\bs{\xi}\in I^{\neq,{\modar \Lambda}+1}$, {\modar
with $\xi_{\ell+1}-\xi_{\ell}=1/\Lambda$ for $\ell =0,\dots,\Lambda-1$,
}
 and $\mathfrak a:=(\mathfrak a_\ell^{(k)},\,\ell\in \id{0}{{\modar\Lambda}},\,k\in\id{0}{a})$, the following holds true:
\begin{equation*}
{\modar \sup_{x\in[\xi_0,\xi_{\modar\Lambda}]}	\Big|\frac{\partial^u}{\partial x^u} H_{\bs{\xi}}(\mathfrak a)(x)\Big| \leq c {\modar\Lambda}^{u}\sup_{\ell,k}|\mathfrak a_\ell^{(k)}|,}
\end{equation*}
where $H_{\bs{\xi}}(\mathfrak a)$ is Hermite interpolant to the data $\mathfrak a$ at $\bs{\xi}$.
\end{lemma}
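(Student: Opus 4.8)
The plan is to read the estimate off the explicit B-spline representation of the Hermite interpolant in Proposition~\ref{Prop_Hermite_interpolation_def} together with the basis-derivative bound \eqref{Eq_derivative_spline_bound}. Differentiating \eqref{Eq_approx_Ha} gives $\partial_x^u H_{\bs{\xi}}(\mathfrak a)=\sum_{i=-1}^{\Lambda-1}\sum_{k=0}^{a}c_i^k(\mathfrak a)\,(B_i^k)^{(u)}$, so it suffices to establish: (i) a uniform coefficient bound $|c_i^k(\mathfrak a)|\le c\sup_{\ell,k}|\mathfrak a_\ell^{(k)}|$; and (ii) that at any fixed $x\in[\xi_0,\xi_\Lambda]$ only boundedly many of the supports $\mathrm{supp}\,B_i^k=[\xi_i,\xi_{i+2}]$ contain $x$. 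Granting (i)--(ii) and using \eqref{Eq_derivative_spline_bound}, which gives $\sup_x|(B_i^k)^{(u)}(x)|\le c\Lambda^u$ for $u\le a$, the triangle inequality immediately yields $|\partial_x^u H_{\bs{\xi}}(\mathfrak a)(x)|\le c\,\Lambda^u\sup_{\ell,k}|\mathfrak a_\ell^{(k)}|$, which is the claim.

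The only step needing real work is (i). First I would use the equal spacing $\xi_{i+1}-\xi_i=\xi_{i+2}-\xi_{i+1}=1/\Lambda$ in \eqref{Eq_function_gk_def}: the substitution $x=\xi_i+t/\Lambda$ gives $g_i^k(\xi_i+t/\Lambda)=\Lambda^{-p}\gamma_i^k(t)$ with $p=2a+1$, where $\gamma_i^k$ is a polynomial of degree $p$ whose coefficients depend only on $a$ and $k$ (for the boundary indices $i\in\{-1,\Lambda-1\}$, where two of the knots coincide, $\gamma_i^k$ changes but still belongs to a finite family). Differentiating $p-v$ times and evaluating at $\xi_{i+1}$, i.e.\ at $t=1$, gives $(g_i^k)^{(p-v)}(\xi_{i+1})=\Lambda^{-v}(\gamma_i^k)^{(p-v)}(1)$, and $|(\gamma_i^k)^{(p-v)}(1)|\le C_a$ for a constant depending only on $a$. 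Plugging this into \eqref{Eq_approx_cik} and using $\Lambda\ge 1$ so that $\Lambda^{-v}\le 1$,
\[
|c_i^k(\mathfrak a)|\le \sum_{v=0}^{a}\frac{1}{v!}\,\Lambda^{-v}\,|(\gamma_i^k)^{(p-v)}(1)|\,|\mathfrak a_{i+1}^{(v)}|\le C_a'\,\sup_{\ell,k}|\mathfrak a_\ell^{(k)}|.
\]
Equivalently, one may avoid the substitution and argue directly: writing $g_i^k=(\cdot-\xi_{i+1})^{a+1}w_i^k$ with $w_i^k$ a degree-$a$ polynomial, Leibniz's rule shows that at $\xi_{i+1}$ only the term that differentiates $(\cdot-\xi_{i+1})^{a+1}$ exactly $a+1$ times survives, so $(g_i^k)^{(p-v)}(\xi_{i+1})=\binom{2a+1-v}{a+1}(a+1)!\,(w_i^k)^{(a-v)}(\xi_{i+1})$, and the remaining $a-v$ derivatives of $w_i^k$ evaluated at a point at distance $1/\Lambda$ from both its roots contribute precisely the factor $\Lambda^{-v}$.

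For (ii) I would just recall that if $x\in[\xi_m,\xi_{m+1})$ then $B_i^k(x)\neq 0$ forces $i\in\{m-1,m\}$, leaving at most $2(a+1)$ admissible pairs $(i,k)$; combined with the constant from (i) and $\sup_x|(B_i^k)^{(u)}(x)|\le c\Lambda^u$ this closes the argument. The main (and essentially only) obstacle is thus the uniform-in-$i$ control of the $c_i^k(\mathfrak a)$ in step (i): it rests on the equal-spacing hypothesis and on verifying that the boundary modifications $\xi_{-1}=\xi_0$, $\xi_{\Lambda+1}=\xi_\Lambda$ still keep the relevant reference polynomials $\gamma_i^k$ in a finite family, so that the constants can be taken independent of $i$ and $\Lambda$. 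Everything else is bookkeeping with formulas already at hand.
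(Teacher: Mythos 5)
Your proposal is correct and follows essentially the same route as the paper: expand $H_{\bs{\xi}}(\mathfrak a)$ in the B-spline basis via Proposition~\ref{Prop_Hermite_interpolation_def}, bound the coefficients uniformly by $\sup_{\ell,k}|\mathfrak a_\ell^{(k)}|$, and apply the derivative bound \eqref{Eq_derivative_spline_bound}. In fact you are more careful on two points the paper's proof asserts or elides — the scaling argument showing $|(g_i^k)^{(p-v)}(\xi_{i+1})|\le C_a\Lambda^{-v}$ (the paper just states it is bounded by $1$), and the locality of the supports $[\xi_i,\xi_{i+2}]$, without which the sum over $i$ would naively contribute an extra factor of $\Lambda$.
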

\begin{proof}
From Proposition \ref{Prop_Hermite_interpolation_def}, the function $H_{\bs{\xi}}(\mathfrak a)$ is defined by
\begin{equation*}
H_{\bs{\xi}}(\mathfrak a)=\sum_{i=-1}^{{\modar\Lambda}-1}\sum_{k=0}^{a}\sum_{v=0}^{{\modar a}}\frac{(-1)^v}{v!}(g_i^k)^{(p-v)}(\xi_{i+1})\mathfrak a_{i+1}^{(v)}B_{i}^k.
\end{equation*}
We recall that $p = 2a + 1$. 
Using \eqref{Eq_derivative_spline_bound} and noting that {\modar $|(g_i^k)^{(p-v)}(\xi_{i+1})|$ is bounded by $1$,}
we get
\begin{align*}
{\modar \sup_{x\in[\xi_0,\xi_{\modar\Lambda}]} }
\Big|\frac{\partial^u}{\partial x^u} H_{\bs{\xi}}(\mathfrak a)(x)\Big|
&\leq \sum_{i=-1}^{{\modar\Lambda}-1}\sum_{k=0}^{a}\sum_{v=0}^{{\modar a}}{\modar \frac{1}{v!} |\mathfrak a_{i+1}^{(v)}|}
{\modar \sup_{x\in[\xi_0,\xi_{\modar\Lambda}]} } \Big|
\frac{\partial^u}{\partial x^u}B_{i}^k(x)\Big|\\
&\leq c {\modar\Lambda^u}{\modar \sup_{\ell,k}|\mathfrak a_\ell^{(k)}|}
\end{align*}
for some constant $c>0$ that does not depend on ${\modar\Lambda}$.
\end{proof}
{\modar 
\begin{prop}\label{Prop_Hermite_interpolation}
Let $\bs{\xi}=(\xi_\ell,\ell\in\id{0}{{\modar\Lambda}})\in I^{\neq,{\modar\Lambda}+1}$ {\modar such that $\xi_{\ell+1}-\xi_
	\ell= 1/{\modar\Lambda}$ for $\ell=0,\dots,{\modar\Lambda}-1$,  and let $f : I \to \mathbb{R}$ a function of class $\cC^{a+1}$.
We denote its Hermite interpolation $H_{\bs{\xi}}f$ at $\bs{\xi}$ defined as \eqref{E: Hermite approx f}. For all $k \in \{0,\dots,a\}$, there exists a constant $c>0$ independent of $f$ such that 
\begin{equation}\label{eq: approx spline}
\sup_{x \in [\xi_0,\xi_{\modar\Lambda}]}	\left|\frac{\partial^k}{\partial x^k}f(x)-\frac{\partial^k}{\partial x^k}H_\xi f(x)\right|  \le \frac{c}{{\modar\Lambda}^{a+1-k}}
\sup_{x \in [\xi_0,\xi_{\modar\Lambda}]}	\left|\frac{\partial^{a+1}}{\partial x^{a+1}}f(x)\right| .
\end{equation}}
\end{prop}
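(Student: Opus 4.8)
The plan is to prove the error bound \eqref{eq: approx spline} by reducing it, via a rescaling and a locality argument, to a standard polynomial-reproduction estimate on a single reference sub-interval. First I would exploit that $H_\xi$ is a \emph{local} operator: on any sub-interval $[\xi_i,\xi_{i+1}]$ the value of $H_\xi f$ depends only on the finitely many derivatives $f^{(k)}(\xi_i)$ and $f^{(k)}(\xi_{i+1})$ (this follows from \eqref{Eq_approx_Ha}--\eqref{Eq_approx_cik} and the fact that the support of $B_j^k$ is $[\xi_j,\xi_{j+2}]$, so only $B_{i-1}^k$ and $B_i^k$ are nonzero on $[\xi_i,\xi_{i+1}]$). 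Hence it suffices to bound $\sup_{x\in[\xi_i,\xi_{i+1}]}|\partial^k(f-H_\xi f)(x)|$ for each fixed $i$ and then take the maximum over $i$.

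Next I would use the polynomial-reproduction property stated in the Remark before Lemma \ref{Lem_linearity_spline}: $H_\xi P = P$ on $[\xi_0,\xi_\Lambda]$ for every polynomial $P$ of degree $\le a$. Fix $i$ and let $P$ be the degree-$a$ Taylor polynomial of $f$ at the left endpoint $\xi_i$. Then $f - H_\xi f = (f-P) - H_\xi(f-P)$ by linearity (Lemma \ref{Lem_linearity_spline}), so on $[\xi_i,\xi_{i+1}]$,
\begin{equation*}
\left|\partial^k(f-H_\xi f)(x)\right| \le \left|\partial^k(f-P)(x)\right| + \left|\partial^k H_\xi(f-P)(x)\right|.
\end{equation*}
The first term is controlled by Taylor's theorem with integral remainder: since $f\in\cC^{a+1}$, $\sup_{[\xi_i,\xi_{i+1}]}|\partial^k(f-P)| \le c\,(\xi_{i+1}-\xi_i)^{a+1-k}\sup|\partial^{a+1}f| = c\,\Lambda^{-(a+1-k)}\sup|\partial^{a+1}f|$. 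For the second term I would apply Lemma \ref{Lem_infinite_bound_deriv_interpolation} with the data $\mathfrak{a}_\ell^{(m)} = (f-P)^{(m)}(\xi_\ell)$, which gives $\sup|\partial^k H_\xi(f-P)| \le c\,\Lambda^k \sup_{\ell,m}|(f-P)^{(m)}(\xi_\ell)|$; but only $\ell\in\{i,i+1\}$ (i.e.\ $\xi_i$ and $\xi_{i+1}$) matter for the sub-interval $[\xi_i,\xi_{i+1}]$ by locality, and again by Taylor's theorem $|(f-P)^{(m)}(\xi_{i+1})| \le c\,(\xi_{i+1}-\xi_i)^{a+1-m}\sup|\partial^{a+1}f| \le c\,\Lambda^{-(a+1-m)}\sup|\partial^{a+1}f|$ for $0\le m\le a$ (and it vanishes at $\xi_i$). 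The worst power of $\Lambda$ in the bound $\Lambda^k \cdot \Lambda^{-(a+1-m)}$ over $m\le a$ is $m=a$, giving $\Lambda^{k-1} \le \Lambda^{k-(a+1-k)}$ whenever... — here one must be slightly careful, so I would instead keep track of all $m$ and note $\Lambda^{k-(a+1-m)} \le \Lambda^{-(a+1-k)}$ requires $m\le 2k - (a+1) + (a+1-k)$...

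Actually the clean way, which I would present, is: apply Lemma \ref{Lem_infinite_bound_deriv_interpolation} not to the raw data but observe that by the locality remark one can WLOG replace $f-P$ by a function supported near $[\xi_i,\xi_{i+1}]$; alternatively, rescale the whole picture by the affine map sending $[\xi_i,\xi_{i+1}]$ to $[0,1]$ so that $\Lambda$-factors disappear, prove the $O(1)$ estimate there using that the Hermite interpolant with the reference knot spacing is a fixed bounded linear operator reproducing degree-$\le a$ polynomials, and then scale back, picking up exactly $(\xi_{i+1}-\xi_i)^{a+1-k} = \Lambda^{-(a+1-k)}$. The main obstacle is precisely this bookkeeping of scaling factors: Lemma \ref{Lem_infinite_bound_deriv_interpolation} as stated loses a factor $\Lambda^k$ against the $\sup$ of the data, and one needs the data $(f-P)^{(m)}(\xi_\ell)$ to be small \emph{of the right order} $\Lambda^{-(a+1-m)}$ in $m$ so that the product telescopes to $\Lambda^{-(a+1-k)}$ — this works because $k \le m$ is not needed; rather one needs $\Lambda^{k} \cdot \Lambda^{-(a+1-m)} \le \Lambda^{-(a+1-k)}$, i.e.\ $k + m \le a+1 -k + a+1 = 2(a+1) - ... $ hmm, i.e.\ $2k + (a+1-m) \le a+1$... this fails for $m=a$, $k\ge 1$. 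So the naive triangle-inequality-through-the-endpoints route is lossy, and the honest proof genuinely requires the rescaling-to-a-reference-interval argument (or equivalently a Bramble–Hilbert / bounded-projection argument on the reference configuration) rather than a direct term-by-term estimate. I would therefore organize the proof around the affine rescaling $x\mapsto \Lambda(x-\xi_i)$, reducing to $\Lambda=1$ on a reference interval where $H$ is a fixed bounded linear map that reproduces $\Pi_a$, invoke the standard fact that such an operator satisfies $\|\partial^k(f - Hf)\|_\infty \le c\,\|\partial^{a+1}f\|_\infty$ on the reference interval (Bramble–Hilbert, using $H(\text{Taylor poly})=\text{Taylor poly}$), and then undo the scaling to recover the stated powers of $\Lambda$.
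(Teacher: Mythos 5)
Your final plan is correct, but it takes a genuinely different route from the paper, and your intermediate diagnosis is slightly off. The paper does exactly the ``direct term-by-term estimate'' that you dismiss as inherently lossy; what rescues it is not rescaling but two refinements you did not use. First, the Taylor polynomial is centred at the evaluation point $x_0$ itself rather than at the endpoint $\xi_i$, so the term $\partial^k(f-P_{x_0})(x_0)$ vanishes identically and only $\partial^k\big(H_{\bs\xi}f-H_{\bs\xi}P_{x_0}\big)(x_0)$ remains. Second, instead of invoking Lemma \ref{Lem_infinite_bound_deriv_interpolation} (which crudely bounds $|(g_i^r)^{(2a+1-m)}(\xi_{i+1})|$ by a constant), the paper uses the explicit form \eqref{Eq_function_gk_def}: since the roots of $g_i^r$ sit on the grid with spacing $1/\Lambda$, one has $|(g_i^r)^{(2a+1-m)}(\xi_{i+1})|\le c\,\Lambda^{-m}$, so the coefficient difference in \eqref{Eq_approx_cik} is bounded by $c\sum_m \Lambda^{-m}|f^{(m)}(\xi_i)-P_{x_0}^{(m)}(\xi_i)|$. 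Combined with the spline-derivative factor $\Lambda^k$ and the Taylor remainder $|f^{(m)}(\xi_i)-P_{x_0}^{(m)}(\xi_i)|\le c\,\Lambda^{m-a-1}\|f^{(a+1)}\|_\infty$, the powers of $m$ cancel exactly, giving $\Lambda^{k-a-1}$ — precisely the telescoping you were looking for, recovered without any change of variables. Your alternative (locality of $H_{\bs\xi}$ on $[\xi_i,\xi_{i+1}]$, affine rescaling to a reference interval, and a Bramble--Hilbert/bounded-projection argument using polynomial reproduction) is a valid and more ``structural'' proof; it buys independence from the explicit formulas for $g_i^r$, at the price of having to verify that the restriction of $H_{\bs\xi}f$ to each subinterval is the two-point Hermite interpolant of degree $2a+1$ (which holds here: $2a+2$ endpoint conditions determine it), that this local operator commutes with the affine map (supported by the scaling representation $B_\ell^k(x)=\overline B_k(\Lambda(x-\xi_\ell))$), and that the reference operator is a fixed bounded map reproducing degree-$a$ polynomials; you should state these verifications explicitly if you write the argument up.
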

}
\begin{proof}
 The proof is postponed to Subsection \ref{subsec_proof_splines}
\end{proof}
\begin{rem}
	If the function $f$ in the previous proposition is a polynomial function of order at most $a$, then we see {\modch that} the right hand side in \eqref{eq: approx spline} is zero. This is not surprising as we know that the Hermite approximation preserves polynomial functions. 
\end{rem}

{\modar In our asymptotics, {\modch we will see that} the number of points $\Lambda$ {\modch will be asymptotically equivalent to }$L_n$. {\modch Hence, it will tend to infinity and so} it will be convenient to use the scaling and translation  properties (see Section 1.1.1 in \cite {Lyche_2017}) of the spline functions to represent $B_{\ell}^k$. Assuming that $\xi_{\ell+1}-\xi_\ell= 1/{\modar\Lambda}$ for $\ell \in \{0,\dots,{\modar\Lambda}-1\}$, we can represent, for $k\in\{0,\dots,a\}$, $\ell \in \{0,\dots,{\modar\Lambda}-{\modch 1}\}$, 
	\begin{equation*}
		B_{\ell}^k(x)=  \overline{B}_k({\modar\Lambda}(x-\xi_\ell)),
	\end{equation*}
where $\overline{B}_k$ is the spline function of order $p=2a + 1$ constructed on the knots $0,
1, 2$ where the knot $0$ is repeated $a+1-k$ times, while the knot $1$ is repeated $a + 1$ times and
the knot 2 is repeated $k + 1$ times.}
%
We have the following result:

\begin{lemma}{\label{l: spline g}}
With previous notation, for all $x\in {\modar \mathbb{R}}$,
\begin{align*}
	\sum_{k=0}^{a}\overline B_{k}'(x)
	=(2a+1) \binom{2a}{a} \Big[x^{a}(1-x)^a {\modar \mathbf{1}_{[0,1]}(x)}-(x-1)^{a}(2-x)^a
	{\modar \mathbf{1}_{[1,2]}(x)}\Big].
\end{align*}    
%
\end{lemma}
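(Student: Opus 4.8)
The plan is to reduce the statement to an explicit computation with the Bernstein-type representation of the building-block spline functions $\overline{B}_k$. Recall that $\overline{B}_k$ is the B-spline of order $p = 2a+1$ on the knot multiset consisting of $0$ repeated $a+1-k$ times, $1$ repeated $a+1$ times, and $2$ repeated $k+1$ times. First I would record that $\overline{B}_k$ is supported on $[0,2]$, is a polynomial of degree $2a+1$ on each of $[0,1]$ and $[1,2]$, and vanishes (together with enough derivatives) at the ends $0$ and $2$ according to the knot multiplicities. The key observation is that each $\overline{B}_k$ is (up to normalisation) determined by these polynomial-piece and smoothness conditions, so it suffices to exhibit explicit polynomials satisfying them; by Mummy's formula \eqref{Eq_function_gk_def}--\eqref{Eq_approx_cik} with $\Lambda=1$ and $\xi_i = i$, the coefficients $c_i^k$ are given by the derivatives of $g_i^k(x)= \frac{1}{(2a+1)!}x^{a-k}(x-1)^{a+1}(x-2)^k$ at $1$, which already makes everything concrete.

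Next I would compute the two one-sided pieces. On $[0,1]$, the relevant normalised spline piece is a constant multiple of $x^{a-k}(1-x)^{?}$-type monomials; more precisely, using the divided-difference/recursive definition \eqref{Eq_recu_spline}, one gets that on $[0,1]$ the function $\overline{B}_k$ restricted there equals a scalar times $x^{a-k}(\text{something of degree } a+1+k)$, and similarly on $[1,2]$. Rather than diagonalise each $\overline{B}_k$ separately, the efficient route is to differentiate and sum: set $G(x) := \sum_{k=0}^{a}\overline{B}_k(x)$ and show $G'(x)$ has the claimed closed form. The sum telescopes because of the structure of the knot multiplicities — as $k$ runs from $0$ to $a$, the "shape" index shifts the knot multiplicity at $0$ from $a+1$ down to $1$ and at $2$ from $1$ up to $a+1$ — so summing the Mummy coefficients $c_i^k$ collapses many intermediate terms. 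Concretely I would verify: (i) $G'$ is supported on $[0,2]$, polynomial of degree $2a$ on each half; (ii) on $[0,1]$, $G'(x) = (2a+1)\binom{2a}{a} x^a (1-x)^a$, and on $[1,2]$, $G'(x) = -(2a+1)\binom{2a}{a}(x-1)^a(2-x)^a$; (iii) the two pieces match to the required order of smoothness at $x=1$ and at the endpoints $0,2$. For (ii) the cleanest argument is that both sides are degree-$2a$ polynomials on $[0,1]$ vanishing to order $a$ at each endpoint, hence proportional; the proportionality constant is pinned down by evaluating a single coefficient (e.g. the leading coefficient, or the value of a suitable derivative at $0$), and this is where the combinatorial factor $(2a+1)\binom{2a}{a}$ appears — it is exactly the normalisation making $\sum_k \overline{B}_k$ a partition-of-unity-type object whose derivative integrates correctly. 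Symmetry $x \mapsto 2-x$ handles $[1,2]$ from $[0,1]$ with the sign flip coming from differentiation.

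The main obstacle I expect is bookkeeping the exact normalisation constant and the sign, i.e. confirming that the sum of Mummy's coefficients $c_i^k$ over $k$ produces precisely $(2a+1)\binom{2a}{a}$ and not some other binomial expression; this requires carefully tracking the factorials in \eqref{Eq_function_gk_def}--\eqref{Eq_approx_cik} (the $1/(2a+1)!$, the $(-1)^v/v!$, and the derivatives of the triple product at $\xi_{i+1}=1$) and recognising a Vandermonde-type identity $\sum_v \binom{a}{v}\binom{a}{a-v} = \binom{2a}{a}$ in the collapse. A convenient sanity check along the way, which I would include, is to integrate the claimed right-hand side over $\mathbb{R}$: $\int_0^1 (2a+1)\binom{2a}{a} x^a(1-x)^a\,dx = (2a+1)\binom{2a}{a} B(a+1,a+1) = (2a+1)\binom{2a}{a}\frac{(a!)^2}{(2a+1)!} = 1$, so $G(1^-) - G(0) = 1$ and by the matching symmetry $G(2) - G(1^+) = -1$, consistent with $G$ rising from $0$ to $1$ across $[0,1]$ and returning to $0$ across $[1,2]$; this confirms the constant is correct and gives independent confidence in the statement. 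The smoothness/matching verification at $x=1$ is then routine since both $G$ (as a sum of $\cC^a$ splines, using the knot-multiplicity count) and the explicit piecewise formula are $\cC^a$ there.
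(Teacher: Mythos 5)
Your route is genuinely different from the paper's. The paper differentiates each $\overline B_k$ with the B-spline derivative recursion (Theorem 9 in Lyche et al.), lets the sum over $k$ telescope down to $(2a+1)\bigl[\overline B_{(a+1,a+1,0)}-\overline B_{(0,a+1,a+1)}\bigr]$ (in the notation of the paper's proof), and then quotes the explicit Bernstein form of these two extreme splines from de Boor. You instead try to identify $G'(x)=\sum_{k=0}^a \overline B_k'(x)$ directly by counting zeros of its polynomial pieces; incidentally, the telescoping you attribute to summing Mummy coefficients is really the paper's derivative-recursion mechanism and plays no role in your actual argument. Your strategy can be made to work, but as written it has two genuine gaps.

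First, the central proportionality step asserts that $G'|_{[0,1]}$ is a degree-$2a$ polynomial ``vanishing to order $a$ at each endpoint'', but only $x=0$ is covered by what you establish: the knot multiplicities give $\overline B_k(x)\sim c_k x^{a+1+k}$ near $0$, hence $G'$ vanishes to order at least $a$ there, whereas at $x=1$ the multiplicity-$(a+1)$ knot only gives $\cC^a$ smoothness of each $\overline B_k$, not vanishing of the left-hand polynomial piece. The reflection $x\mapsto 2-x$ you invoke (which gives $\overline B_k(2-x)=\overline B_{a-k}(x)$, so $G$ is symmetric about $1$), combined with the resulting $\cC^{a-1}$ regularity of $G'$ at $1$, only forces the even-order one-sided derivatives of $G'$ at $1$ to vanish, which is not enough. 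The missing ingredient is the partition-of-unity identity on an interior knot interval, $\sum_{k=0}^a \overline B_k(u)+\sum_{k=0}^a\overline B_k(u+1)=1$ for $u\in[0,1]$; together with the reflection it gives $G(u)+G(1-u)=1$, hence $G'(u)=G'(1-u)$ on $(0,1)$, and the order-$a$ vanishing at $1$ then follows from that at $0$. Second, the constant $(2a+1)\binom{2a}{a}$ is nowhere actually derived: the coefficient computation via Mummy's formula is only announced as an anticipated obstacle, and the closing integral ``sanity check'' is circular as a determination of the constant, since it integrates the claimed right-hand side to infer $G(1^-)-G(0)=1$ rather than proving $G(0)=0$ and $G(1^-)=1$ independently. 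With the partition-of-unity identity in hand both points close at once ($G(0)=0$, $G(1)=1$, so $c\int_0^1 x^a(1-x)^a\,dx=1$ forces $c=(2a+1)\binom{2a}{a}$), and your argument becomes a complete, self-contained alternative to the paper's telescoping proof; without it, the two key claims are unsupported.
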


\begin{proof}
The proof is postponed to Subsection \ref{Subsec_spline g}.    
\end{proof}

\subsubsection*{Public data extension by B-splines}
Starting from the notation introduced in the previous subsection on B-splines, we consider the case where $I=\Theta={\modar [0,1]}$, {\modar $\Lambda=L_n-1$} and $\xi_\ell=\theta_\ell$ for all $\ell\in\id{0}{\Lambda}$, i.e. the sequence of interpolation points \eqref{Eq_splines_sequence_xi} is here the sequence of grid points $(\theta_\ell,\,\ell\in\id{0}{L_n-1})$. 
{\modar Now, the operator $H_\Xi$ given in \eqref{E: interpol spline avant} is the operator $H_\xi$ detailed in the previous subsection.
	We recall that for any  $(i,j)\in\id{1}{N}\times\id{1}{n}$, the public data $Z_j^i=( Z_j^{i,(k)}(\theta))_{ \theta\in\Xi,0\leq k\leq a}\in\mathbb{R}^{L_n\times(a+1)}$ 
	is available. It encapsulates some proxy for the values of $f(\theta;X^i_{t_{j-1}},X^i_{t_j})$, where $\theta$ is an element of the grid, as well as for its derivatives up to order $a$.}
Applying for any $(i,j)\in\id{1}{N}\times\id{1}{n}$ the Proposition \ref{Prop_Hermite_interpolation_def} to the data $Z_j^i$
we define $H_\Xi Z_j^{i}:=H_{\xi}Z_j^{i}$ the Hermite interpolation of $Z_j^i$ on the grid $\Xi$, i.e. such that for all $k\in\left \{0, \dots , a \right \}$, $\theta\in\Xi=\{\theta_0,\dots,\theta_{L_n-1}\}$,
\begin{equation*}
(H_{\modch \Xi} Z_j^{i})^{(k)}(\theta)={\modar Z_j^{i,{(k)}}(\theta).}
\end{equation*}
\textcolor{black}{To lighten the notation we will write $H$ for $H_\Xi$.}

\subsection{Technical results}{\label{s: technical}}
In the following, it will be particularly convenient to keep track of the size of the different terms we will be working with. The definition of the function {\modar $R_{t,n}^N$} as below will help us in such sense. \textcolor{black}{We note that this notation is widely used in the literature to denote a remainder function. Key references where it has been applied in the context of parameter estimation for stochastic processes include \cite{42McK} and \cite{Shimizu}, among others.} \\
Define $\cF_{t}^N=\sigma\big(X_{s}^{i},\, s\le t,\,i\in\{1,\dots,N\}\big)\vee \sigma \big(\mathcal{E}_{u}^{i,\ell,(k)},\, t_u\le t,\,i\in\{1,\dots,N\},\,\ell \in \{0,\dots,L_n-1\}, k \in \{0, ... , a \} \big)$ and $\E_{t}[\cdot] := \E[\cdot | \cF_{t}^N]$. For a set of random variables $(R^{N}_{t, n})$ and $\tilde{k} \ge 0$, the notation $R^{N}_{t, n} = R_{t}(\Delta_n^{\tilde{k}})$ means that $R^{N}_{t, n}$ is $\cF_{t}^N$-measurable and the set $(R^{N}_{t, n}/\Delta_n^{\tilde{k}})$ is bounded in $L^q$ for all $q \ge 1$, uniformly in $t, n, N$. Hence, there exists a constant $C_q > 0$ such that 
\begin{equation}{\label{eq: def R}}
\E\bigg[\Big|\frac{R^{N}_{t, n}}{\Delta_n^{\tilde{k}}}\Big|^q\bigg] \le C_q
\end{equation}
for any $t, n, N, q \ge 1$. {\modar If this remainder term also depends on the individual $i \in \id{0}{N}$, we assume that the control is uniform in $i$.} Thanks to the definition just provided, the function $R$ has the following useful property
\begin{equation*}
R_{t}(\Delta_n^{\tilde{k}}) = \Delta_n^{\tilde{k}} R_{t}(1).  
\end{equation*}
We underline that the equation above does not entail the linearity of $R$, as in the left and the right hand side above the two functions $R$ are not necessarily the same but just two functions on which the control in \eqref{eq: def R}
is satisfied. 

Let us state a technical lemma that gathers some moment inequalities we will use several times in the sequel. For the interested reader, its proof can be found for example in Lemma 5.1 of \cite{McKean}. \\
In the sequel the notation $c$ refers to a general constant and its value may change from line to line. 

\begin{lemma}{\label{l: moment}}
Assume that Assumption \ref{Ass_1} is satisfied. Then, for all $p \ge 1$, $0 \le s < t \le T$ such that $t - s \le 1$ and $i \in \{1, \dots , N \}$, the following holds true. 
\begin{itemize}
    \item $\sup_{t \in [0, T]} \E[|X_t^{i}|^p] < c$. 
    \item $\E[|X_t^{i}- X_s^i|^p] \le c (t - s)^{\frac{p}{2}}$.
    \item $\E_s[|X_t^{i}- X_s^i|^p] \le c (t - s)^{\frac{p}{2}} {\modar R_s(1)}$.
\end{itemize}

\end{lemma}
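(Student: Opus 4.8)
The statement to prove is Lemma~\ref{l: moment}, which collects three standard moment estimates for the diffusion $(X^i_t)$ solving~\eqref{Eq_EDS} under Assumption~\ref{Ass_1}.

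\textbf{Overall strategy.} All three bounds follow from Gronwall-type arguments applied to the SDE together with the Burkholder--Davis--Gundy (BDG) inequality, using crucially that the drift $b(\theta^\star,\cdot)$ is bounded (Assumption~\ref{Ass_1}) and that $\sigma$ has at most linear growth (it is globally Lipschitz, hence $|\sigma(x)|\le |\sigma(0)|+c|x|$). The initial condition $X_0^i\in\cap_{p\ge1}L^p$ handles the starting point. The proof is classical; since the excerpt itself points to Lemma~5.1 of~\cite{McKean}, I would simply reproduce the short argument.

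\textbf{Step 1: uniform moment bound.} For the first item, raise $|X^i_t|$ to the $p$-th power using~\eqref{Eq_EDS}, apply the elementary inequality $|a+b+c|^p\le 3^{p-1}(|a|^p+|b|^p+|c|^p)$, take expectations, and estimate the drift integral by Jensen (it is bounded since $b(\theta^\star,\cdot)$ is bounded) and the stochastic integral by BDG followed by Jensen on $[0,t]$, producing a term $\le C\int_0^t \E[|X^i_s|^p]\,ds$ via the linear growth of $\sigma$. Since $t\le T$ is fixed, Gronwall's lemma gives $\sup_{t\in[0,T]}\E[|X^i_t|^p]\le C(1+\E[|X^0_i|^p])<\infty$. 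One should first check $\E[\sup_{t\le T}|X^i_t|^p]<\infty$ by a standard localization/stopping-time argument to justify the use of BDG, or invoke the known well-posedness and moment bounds guaranteed right after Assumption~\ref{Ass_2} in the text.

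\textbf{Step 2: increment bounds.} For the second item, write $X^i_t-X^i_s=\int_s^t b(\theta^\star,X^i_u)\,du+\int_s^t\sigma(X^i_u)\,dW^i_u$, take the $p$-th moment, and split into two contributions. The drift term is bounded by $\|b\|_\infty^p (t-s)^p\le \|b\|_\infty^p (t-s)^{p/2}$ since $t-s\le1$. The diffusion term is handled by BDG: $\E[|\int_s^t\sigma(X^i_u)dW^i_u|^p]\le C_p\,\E[(\int_s^t\sigma^2(X^i_u)du)^{p/2}]\le C_p(t-s)^{p/2}\sup_{u\le T}\E[|\sigma(X^i_u)|^p]$, which is finite by Step~1 and linear growth. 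The third item is the same computation done conditionally on $\cF_s^N$: since $X^i_s$ is $\cF_s^N$-measurable and $W^i$ has independent increments after $s$, conditional BDG and the conditional linear-growth estimate give $\E_s[|X^i_t-X^i_s|^p]\le c(t-s)^{p/2}(1+|X^i_s|^p+\text{(martingale moments)})$, and one absorbs the $\cF_s^N$-measurable fluctuation into a remainder term $R_s(1)$ in the sense of~\eqref{eq: def R}, whose $L^q$-boundedness uniformly in $s,n,N$ is exactly Step~1.

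\textbf{Main obstacle.} There is no real obstacle here — this is a textbook estimate. The only points requiring a little care are (i) justifying the BDG application by a preliminary finiteness-of-moments / localization argument (or by citing the well-posedness statement already in the paper), and (ii) phrasing the third bound precisely in the $R_s(\cdot)$ formalism of~\eqref{eq: def R}, i.e. verifying that the $\cF_s^N$-measurable prefactor has all moments bounded uniformly in $s,n,N$, which is immediate from Step~1. Given that the paper explicitly refers to~\cite[Lemma~5.1]{McKean} for the details, I would state the three bounds, sketch Steps~1--2 in a few lines, and refer to that reference for the routine verifications.
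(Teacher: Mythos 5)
Your proposal is correct and follows exactly the standard Gronwall plus Burkholder--Davis--Gundy argument that underlies this lemma; the paper itself gives no proof and simply refers to Lemma 5.1 of \cite{McKean}, which proceeds along the same lines. The only minor points worth noting are the usual reduction of the case $1\le p<2$ to $p\ge 2$ via Jensen and the conditional Gronwall step showing $\E_s[|X^i_u|^p]\le c(1+|X^i_s|^p)$ before identifying the prefactor with $R_s(1)$, both of which you essentially indicate.
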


The following lemma will be useful in studying the asymptotic behavior of the elements that will come into play. Its proof is provided in Section \ref{s: proof preliminary}.

\begin{lemma}{\label{l: asymptotic 2.5}}
Suppose that Assumption \ref{Ass_1} holds. Let $g: \mathbb{R} \rightarrow \mathbb{R}$ satisfy, for some $c > 0$, $k \in \mathbb{N}$, $x,y \in \R$
$$|g(x) - g (y)| \le c |x - y|(1 + |x| + |y|)^k.$$
Then, the following convergence in probability holds true
$$\frac{\Delta_n}{N} \sum_{i = 1}^N \sum_{j =1}^n g(X^i_{t_{j - 1}}) \rightarrow \int_0^T \E[g(X_s)] ds.$$
\end{lemma}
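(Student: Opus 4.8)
\textbf{Proof plan for Lemma \ref{l: asymptotic 2.5}.}

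The plan is to show convergence in $L^1$, which implies convergence in probability. Write $Y_{n,N} := \frac{\Delta_n}{N}\sum_{i=1}^N\sum_{j=1}^n g(X^i_{t_{j-1}})$, and decompose $Y_{n,N} - \int_0^T \E[g(X_s)]ds$ into the sum of a centered fluctuation term and a bias term. Concretely, set
\begin{equation*}
Y_{n,N} - \int_0^T \E[g(X_s)]\,ds = \underbrace{\frac{\Delta_n}{N}\sum_{i=1}^N\sum_{j=1}^n\big(g(X^i_{t_{j-1}}) - \E[g(X^i_{t_{j-1}})]\big)}_{=:A_{n,N}} + \underbrace{\Big(\Delta_n\sum_{j=1}^n\E[g(X_{t_{j-1}})] - \int_0^T\E[g(X_s)]\,ds\Big)}_{=:B_n},
\end{equation*}
using that the $X^i$ are i.i.d. copies of $X$, so $\E[g(X^i_{t_{j-1}})] = \E[g(X_{t_{j-1}})]$.

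First I would handle $B_n$, which is a deterministic Riemann-sum error. The polynomial-Lipschitz hypothesis on $g$ together with the moment bounds of Lemma \ref{l: moment} (first bullet: $\sup_{t\le T}\E[|X_t^i|^p]<c$ for all $p$; second bullet: $\E[|X_t^i - X_s^i|^p]\le c(t-s)^{p/2}$) gives, via Cauchy--Schwarz, that $s\mapsto \E[g(X_s)]$ is $\frac12$-Hölder continuous on $[0,T]$: indeed $|\E[g(X_t)] - \E[g(X_s)]| \le c\,\E[|X_t-X_s|(1+|X_t|+|X_s|)^k] \le c\,\E[|X_t-X_s|^2]^{1/2}\E[(1+|X_t|+|X_s|)^{2k}]^{1/2} \le c|t-s|^{1/2}$. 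Hence each Riemann-sum cell contributes an error $O(\Delta_n^{1/2}\cdot\Delta_n) = O(\Delta_n^{3/2})$ and summing over the $n = T/\Delta_n$ cells gives $|B_n| \le c\,\Delta_n^{1/2} \to 0$.

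Next I would bound $\E[|A_{n,N}|]$ by $\E[|A_{n,N}|^2]^{1/2}$. By independence across $i$, the variance factorizes: $\E[A_{n,N}^2] = \frac{1}{N}\E\big[\big(\Delta_n\sum_{j=1}^n(g(X_{t_{j-1}}) - \E[g(X_{t_{j-1}})])\big)^2\big]$. Expanding the square and using Cauchy--Schwarz term by term, each of the $n^2$ cross terms is bounded by $\Var(g(X_{t_{j-1}}))^{1/2}\Var(g(X_{t_{k-1}}))^{1/2} \le c$ uniformly (again by the polynomial-Lipschitz bound and the moment estimates), so $\E[A_{n,N}^2] \le \frac{1}{N}\,\Delta_n^2 n^2 \,c = \frac{cT^2}{N} \to 0$. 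Therefore $\E[|A_{n,N}|] \to 0$, and combining with $B_n\to 0$ yields $Y_{n,N}\to\int_0^T\E[g(X_s)]ds$ in $L^1$, hence in probability.

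The main obstacle is a minor one: verifying uniform-in-$n$ integrability of the quantities $g(X_{t_{j-1}})$ and their products with the correct powers, which is where the "all moments finite" conclusion of Lemma \ref{l: moment} and the specific polynomial growth $(1+|x|+|y|)^k$ in the hypothesis are used. One must be slightly careful that $g$ itself is only controlled through its increments, so write $|g(x)| \le |g(0)| + c|x|(1+|x|)^k$ to get polynomial growth of $g$, and then all moments of $g(X_{t_{j-1}})$ are finite and bounded uniformly in $j,n$. No martingale or mixing argument is needed because the $\frac{1}{N}$ prefactor from the i.i.d. structure already kills the fluctuation term regardless of the temporal dependence within a single path.
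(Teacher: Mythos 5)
Your argument is correct, but it follows a different decomposition from the paper's. The paper first replaces the Riemann sum by the random path integrals, showing $\frac{\Delta_n}{N}\sum_{i,j} g(X^i_{t_{j-1}}) - \frac{1}{N}\sum_i \int_0^T g(X^i_s)\,ds \to 0$ in $L^1$ (via the same Cauchy--Schwarz plus increment-moment estimate you use, yielding a $c\Delta_n^{1/2}$ bound), and then invokes the law of large numbers for the i.i.d. variables $\int_0^T g(X^i_s)\,ds$ to conclude. You instead compare directly with the deterministic limit: your term $B_n$ is the Riemann error for the $\tfrac12$-Hölder function $s\mapsto \E[g(X_s)]$, and your term $A_{n,N}$ is handled by an explicit variance computation, where independence across $i$ gives $\E[A_{n,N}^2]\le \frac{1}{N}\E[V_1^2]$ with $V_1=\Delta_n\sum_j\big(g(X_{t_{j-1}})-\E[g(X_{t_{j-1}})]\big)$, and the crude bound $\E[V_1^2]\le \Delta_n^2 n^2 \sup_j \operatorname{Var}(g(X_{t_{j-1}}))\le cT^2$ suffices -- no temporal decorrelation within a path is needed, exactly as you observe. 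Both routes rest on the same two ingredients (the moment bounds of Lemma \ref{l: moment} and the i.i.d. structure across individuals), but yours is quantitative, giving an $L^1$ rate of order $\Delta_n^{1/2}+N^{-1/2}$, and replaces the LLN step by a self-contained second-moment argument; the paper's version is slightly shorter at the cost of being purely qualitative. Your remark on deducing polynomial growth of $g$ from the increment hypothesis via $|g(x)|\le |g(0)|+c|x|(1+|x|)^k$ correctly fills the only integrability point that needs checking.
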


In the main body of our paper, we will frequently rely on the fact that
{\modar $\varphi_{\tau_n}(f^{i,\ell,(k)}_j)$} 
can be approximated as $1$.
 To establish this, the following lemma will be of utmost importance. Its proof is deferred to Section \ref{s: proof preliminary}.

\begin{lemma}{\label{l: bound proba}}
Assume that Assumptions \ref{Ass_1}- \ref{Ass_3} are in order. Recall that $f(\theta;X_{t_{j-1}}^{i},X_{t_j}^{i})$ has been given in 
{\modar \eqref{Eq_Contrast_function}}
	and $\tau_n$ has been chosen as $\sqrt{\Delta_n} \log(n)$. Then, for any $r \ge 2$ and any $k \ge 0$, there exist constants $c_1, c_2 > 0$ such that, 
$$\mathbb{P}_{t_{j-1}}(|f^{(k)}(\theta;X_{t_{j-1}}^{i},X_{t_j}^{i})| > \tau_n)
\le  \frac{\Delta_n^{r/2}}{(\log(n))^r}R_{t_{j-1}}(1)+c_1\exp\Big(-c_2(\log(n))^2\Big).$$    
\end{lemma}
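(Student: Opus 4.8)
\textbf{Proof plan for Lemma \ref{l: bound proba}.}

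The plan is to decompose the event $\{|f^{(k)}(\theta;X_{t_{j-1}}^{i},X_{t_j}^{i})| > \tau_n\}$ by isolating the fluctuation of $X_{t_j}^i$ around $X_{t_{j-1}}^i$. Recall from \eqref{Eq_Contrast_function} that, up to the division by $\sigma^2$, $f(\theta;x,y)=\frac{2b(\theta,x)(y-x)-\Delta_n b^2(\theta,x)}{\sigma^2(x)}$; differentiating $k$ times in $\theta$ produces a term linear in $(y-x)$ with coefficient $2\partial_\theta^k b(\theta,x)/\sigma^2(x)$, plus a term of order $\Delta_n$ coming from the $\Delta_n b^2$ piece. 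Using Assumptions \ref{Ass_1}--\ref{Ass_3} (boundedness of $b$ and all its $\theta$-derivatives, and $\sigma^2\ge\sigma_{\min}^2$), the $\Delta_n$-term is deterministically bounded by $c\Delta_n$, which is $\le \tau_n/2=\sqrt{\Delta_n}\log(n)/2$ for $n$ large. Hence for large $n$,
\[
\{|f^{(k)}_j| > \tau_n\} \subset \Big\{ |X_{t_j}^i - X_{t_{j-1}}^i| > c'\tau_n \Big\}
\]
for a suitable constant $c'>0$ absorbing $2\|\partial_\theta^k b\|_\infty/\sigma_{\min}^2$ and the factor from the split; small $n$ can be handled by adjusting the constants $c_1,c_2$ so the bound is trivial there.

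It then remains to bound $\mathbb{P}_{t_{j-1}}\big(|X_{t_j}^i - X_{t_{j-1}}^i| > c'\tau_n\big)$ with $\tau_n=\sqrt{\Delta_n}\log(n)$. Here I would split the increment into its drift part $\int_{t_{j-1}}^{t_j} b(\theta^\star,X_s^i)\,ds$, which is bounded by $\|b\|_\infty \Delta_n \ll \tau_n$, and the martingale part $M:=\int_{t_{j-1}}^{t_j}\sigma(X_s^i)\,dW_s^i$. So the event reduces to $\{|M| > c''\tau_n\}$. To control this I would use a standard exponential (Bernstein/Dambis--Dubins--Schwarz) argument: the quadratic variation $\langle M\rangle_{t_j}-\langle M\rangle_{t_{j-1}} = \int_{t_{j-1}}^{t_j}\sigma^2(X_s^i)\,ds$ is not bounded (since $\sigma$ is only Lipschitz, not bounded), so I would further split on the event $\{\sup_{s\in[t_{j-1},t_j]}|X_s^i - X_{t_{j-1}}^i| \le 1\}$ and its complement. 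On the former, $\langle M\rangle$ is bounded by $\sigma^2(X_{t_{j-1}}^i+1)\Delta_n \le c(1+|X_{t_{j-1}}^i|)^2\Delta_n$, so a conditional Gaussian-type tail bound gives a term like $\exp(-c_2(\log n)^2(1+|X_{t_{j-1}}^i|)^{-2})$; since we want a clean $\exp(-c_2(\log n)^2)$, I would instead split at a deterministic level, say $\{\sup_s |X_s^i - X_{t_{j-1}}^i| \le \Delta_n^{1/2-\varepsilon}\}$ or simply handle the unbounded-$\sigma$ issue by the Markov-inequality route below on the bad event. On the complementary (rare) event, I would apply the third bullet of Lemma \ref{l: moment}, namely $\E_{t_{j-1}}[|X_{t_j}^i-X_{t_{j-1}}^i|^r]\le c\Delta_n^{r/2}R_{t_{j-1}}(1)$ together with a maximal inequality, and Markov's inequality at order $r$, yielding a contribution $\le \frac{\Delta_n^{r/2}}{\tau_n^r}R_{t_{j-1}}(1)=\frac{R_{t_{j-1}}(1)}{(\log n)^r}$, up to the constant $c'$; combining, this produces exactly the claimed $\frac{\Delta_n^{r/2}}{(\log n)^r}R_{t_{j-1}}(1)$ after re-expressing — actually, applying Markov directly at order $r$ to $\{|X_{t_j}^i-X_{t_{j-1}}^i|>c'\tau_n\}$ already gives $\le c\,\Delta_n^{r/2}R_{t_{j-1}}(1)/(c'\tau_n)^r = R_{t_{j-1}}(1)/(\log n)^r$; the exponential term is then only needed to sharpen the $\Delta_n$-power, i.e. to upgrade $1/(\log n)^r$ to $\Delta_n^{r/2}/(\log n)^r$, via the Gaussian-tail estimate on the bounded-quadratic-variation part, with the unbounded part reabsorbed into $R_{t_{j-1}}(1)$.

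The main obstacle is the unboundedness of $\sigma$: a naive exponential martingale inequality gives a tail that degrades with $|X_{t_{j-1}}^i|$, which cannot be absorbed into a deterministic $\exp(-c_2(\log n)^2)$. The way around this is precisely the two-regime split described above — use the sub-Gaussian conditional tail on the event where the diffusion coefficient along the path stays comparable to $\sigma(X_{t_{j-1}}^i)$ combined with polynomial moment control of $(1+|X_{t_{j-1}}^i|)$ (absorbed into $R_{t_{j-1}}(1)$ via the first bullet of Lemma \ref{l: moment}), and fall back to the crude $L^r$-Markov bound of Lemma \ref{l: moment} on the complementary event; carefully choosing the truncation level so that the Gaussian term is genuinely $\Delta_n^{r/2}(\log n)^{-r}R_{t_{j-1}}(1)+\exp(-c_2(\log n)^2)$ is the only delicate bookkeeping. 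All the needed estimates (conditional moments, maximal inequality for the increment, boundedness of $b$ and its $\theta$-derivatives) are available from Assumptions \ref{Ass_1}--\ref{Ass_3} and Lemma \ref{l: moment}.
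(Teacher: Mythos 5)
There is a genuine gap, and it sits exactly at the point you flag as "the main obstacle". Your first reduction bounds the coefficient $2\partial_\theta^k b(\theta,X^i_{t_{j-1}})/\sigma^2(X^i_{t_{j-1}})$ by a constant and collapses the event to $\{|X^i_{t_j}-X^i_{t_{j-1}}|>c'\tau_n\}$. This throws away the factor $1/\sigma^2(X^i_{t_{j-1}})$, and after that the unboundedness of $\sigma$ cannot be repaired by the devices you sketch. The conditional sub-Gaussian tail for the raw increment is of the form $\exp\big(-c(\log n)^2/(1+|X^i_{t_{j-1}}|)^2\big)$; converting this into the allowed form fails: using $e^{-x}\le C_m x^{-m}$ only produces a term $(\log n)^{-2m}R_{t_{j-1}}(1)$ with no power of $\Delta_n$, which is far weaker than the claimed $\Delta_n^{r/2}(\log n)^{-r}R_{t_{j-1}}(1)$, and a deterministic truncation of $|X^i_{t_{j-1}}|$ cannot work either — to keep the exponential term equal to $\exp(-c_2(\log n)^2)$ the truncation level must stay bounded, but then the bad event has probability bounded away from super-polynomial smallness (only polynomial moments are available), so its indicator cannot be absorbed into $\Delta_n^{r/2}(\log n)^{-r}R_{t_{j-1}}(1)$ in the sense of \eqref{eq: def R}. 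Likewise, your crude Markov bound gives only $R_{t_{j-1}}(1)/(\log n)^{r}$, as you note, and the proposal never supplies a valid mechanism to recover the missing $\Delta_n^{r/2}$.

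The missing idea, which is how the paper proceeds, is to substitute the SDE dynamics inside $f^{(k)}$ \emph{before} bounding the coefficient, so that the division by $\sigma^2(X^i_{t_{j-1}})$ cancels against the diffusion coefficient of the leading term. Writing $X^i_{t_j}-X^i_{t_{j-1}}=\int_{t_{j-1}}^{t_j}b\,ds+\sigma(X^i_{t_{j-1}})(W^i_{t_j}-W^i_{t_{j-1}})+\int_{t_{j-1}}^{t_j}\big(\sigma(X^i_s)-\sigma(X^i_{t_{j-1}})\big)dW^i_s$, the dominant Gaussian piece of $f^{(k)}$ becomes $\frac{2\partial_\theta^k b(\theta,X^i_{t_{j-1}})}{\sigma(X^i_{t_{j-1}})}(W^i_{t_j}-W^i_{t_{j-1}})$, whose coefficient is bounded by $2\|\partial_\theta^k b\|_\infty/\sigma_{\min}$ thanks to Assumptions \ref{Ass_2}--\ref{Ass_3}; the standard Gaussian tail then yields the deterministic $c_1\exp(-c_2(\log n)^2)$, uniformly in $X^i_{t_{j-1}}$. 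The residual martingale $\int(\sigma(X^i_s)-\sigma(X^i_{t_{j-1}}))dW^i_s$ is small: Markov at order $r$, Burkholder--Davis--Gundy, the Lipschitz property of $\sigma$ and Lemma \ref{l: moment} give a contribution $c\Delta_n^{r}(\sqrt{\Delta_n}\log n)^{-r}R_{t_{j-1}}(1)=c\Delta_n^{r/2}(\log n)^{-r}R_{t_{j-1}}(1)$, and the drift and $\Delta_n b^2$ terms are handled by Markov as you do. Without this cancellation (or an equivalent use of the structure of $f$), the two-term bound of the lemma is not reachable under the stated assumptions.
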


\section{Proof of main results}{\label{s: proof main}}

\subsection{Consistency}

Let us start by providing the proof of the consistency as gathered in Theorem \ref{th: consistency}. This heavily relies on the fact that we can move from the contrast $S^{N,\text{pub}}_n$ as defined in \eqref{Eq_definition_wtSN} to the contrast $S_n^{N, 0}(\theta)$ that we would have in absence of privacy constraints and the latter is given by 

\begin{equation}{\label{eq: def contrast without privacy}}
S_n^{N,0}(\theta) := \sum_{i = 1}^N \sum_{j = 1}^n  f(\theta;X_{t_{j-1}}^i,X_{t_{j}}^i),
\end{equation}
{\modar where we recall 
	\begin{equation}  \label{E: def f apres}
		f(\theta;X_{t_{j-1}}^i,X_{t_{j}}^i)=\frac{2b(\theta,X_{t_{j-1}}^{i})(X_{t_j}^{i}-X_{t_{j-1}}^{i})-\Delta_n b^2(\theta,X_{t_{j-1}}^{i})}{{\modch \sigma}^{2}(X_{t_{j-1}}^{i})}.
	\end{equation}}
Then, the following proposition provides a bound, for any $ \textcolor{black}{u\in\id{0}{a}},$ on the quantity
\begin{equation*}
E_{n,N}^{(u)} := \sup_{\theta \in {\modar [\theta_0,\theta_{L_n-1}]}} \Big|\frac{\partial^u}{\partial \theta^u}({\modar S_n^{N,\text{pub}}(\theta)} - S_n^{N,0}(\theta))\Big|.
\end{equation*}

\begin{prop}{\label{prop: approx contrast}}
Assume that Assumptions \ref{Ass_1}- \ref{Ass_3} are in order. 
Then, for any $p \ge 2$ and for any $ \textcolor{black}{u\in\id{0}{a}},$ {there exists a constant $c > 0$ such that}
\begin{align*}
\left \| E_{n,N}^{(u)} \right \|_{p} &  { \le c\bigg(N\Big(\frac{1}{L_n}\Big)^{a - u - 1} + L_n^{u + 1} \sqrt{N} \log(n) \sqrt{\frac{\log(L_n)}{\bar{\alpha}_2}}+ \frac{L_n^u N}{n^{r}}\bigg),}
\end{align*}
{\modar where the constant $r>0$ can be chosen arbitrarily large, and}
where we recall that $\bar{\alpha}_2$ is such that $1/\bar{\alpha}_2=n^{-1} \sum_{j = 1}^n 1/\alpha_j^2$.
\end{prop}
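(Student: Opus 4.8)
The plan is to decompose $S_n^{N,\text{pub}}(\theta) - S_n^{N,0}(\theta)$ into three sources of error and bound each separately in $L^p$, uniformly over $\theta \in [\theta_0, \theta_{L_n-1}]$, after differentiating $u$ times. First I would write, for each pair $(i,j)$,
\begin{equation*}
H_\Xi(Z_j^i)(\theta) - f(\theta; X_{t_{j-1}}^i, X_{t_j}^i) = \big(H_\Xi(\mathcal E_j^i)(\theta)\big) + \big(H_\Xi(f\varphi_{\tau_n}(f))_{i,j}(\theta) - (f\varphi_{\tau_n}(f))_{i,j}(\theta)\big) + \big((f\varphi_{\tau_n}(f))_{i,j}(\theta) - f(\theta; X_{t_{j-1}}^i,X_{t_j}^i)\big),
\end{equation*}
using the linearity of $H_\Xi$ (Lemma \ref{Lem_linearity_spline}); here $\mathcal E_j^i$ denotes the grid-indexed array of Laplace noises and $(f\varphi_{\tau_n}(f))_{i,j}$ the truncated contrast increment. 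The first term is the \emph{noise term}, the second is the \emph{spline approximation error} of the truncated function, and the third is the \emph{truncation error}. After applying $\partial_\theta^u$, summing over $i,j$, and taking the supremum over $\theta$, the triangle inequality in $L^p$ reduces the problem to bounding the three contributions, which I expect to match the three summands $N L_n^{-(a-u-1)}$, $L_n^{u+1}\sqrt N \log(n)\sqrt{\log(L_n)/\bar\alpha_2}$, and $L_n^u N / n^r$ respectively.

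For the spline approximation error I would invoke Proposition \ref{Prop_Hermite_interpolation} with $\Lambda \sim L_n$: since the truncated map $\theta \mapsto (f\varphi_{\tau_n}(f))_{i,j}(\theta)$ is $\mathcal C^{a+1}$ with $(a+1)$-th derivative bounded (by Assumption \ref{Ass_3}, and using that $\varphi$ is smooth with the $f$-derivatives having polynomial-in-$(X)$ bounds), one gets a pointwise error of order $L_n^{-(a+1-u)}$ per term times $\sup |\partial_\theta^{a+1}(\cdot)|$; summing $N$ bounded terms (after controlling the $L^p$ norm of the suprema via Lemma \ref{l: moment}) yields $N L_n^{-(a+1-u)} \le N L_n^{-(a-u-1)}$. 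For the truncation error, the difference $(f\varphi_{\tau_n}(f))_{i,j} - f$ is supported on the event $\{|f^{(k)}(\theta; X_{t_{j-1}}^i, X_{t_j}^i)| > \tau_n\}$; here I would apply Lemma \ref{l: bound proba} together with Hölder's inequality to show each such term contributes $R_{t_{j-1}}(\Delta_n^{r'/2}/(\log n)^{r'}) + (\text{exp-small})$ for $r'$ arbitrarily large, and after differentiating via the $H_\Xi$-derivative bound (Lemma \ref{Lem_infinite_bound_deriv_interpolation}, costing $L_n^u$) and summing $Nn$ terms one absorbs everything into $L_n^u N/n^r$ by choosing $r'$ large relative to $r$ and using $n\Delta_n = T$.

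The main obstacle is the \textbf{noise term} $\sum_{i,j}\partial_\theta^u H_\Xi(\mathcal E_j^i)(\theta)$: here one must control the supremum over $\theta$ of a Hermite interpolant of independent Laplace variables. I would expand $H_\Xi(\mathcal E_j^i)$ in the B-spline basis $(B_\ell^k)$ via Proposition \ref{Prop_Hermite_interpolation_def}, so $\partial_\theta^u H_\Xi(\mathcal E_j^i)(\theta) = \sum_{\ell,k} c_\ell^k(\mathcal E_j^i)\,\partial_\theta^u B_\ell^k(\theta)$, use the derivative bound $\sup|\partial_\theta^u B_\ell^k| \le c L_n^u$ from \eqref{Eq_derivative_spline_bound}, and note that at any fixed $\theta$ only $O(1)$ basis functions are nonzero (local support $[\xi_\ell,\xi_{\ell+2}]$). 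The coefficients $c_\ell^k$ are linear combinations of the $\mathcal E_j^{i,\ell',(k')}$, which are centered, independent across $(i,j,\ell,k)$, with scale $2\tau_n L_n(a+1)/\alpha_j$, hence sub-exponential with variance parameter of order $\tau_n^2 L_n^2/\alpha_j^2$. Summing over $i$ (independence) and over the $n$ values of $j$, the relevant variance proxy at a fixed $\theta$ is of order $N L_n^{2u}\tau_n^2 L_n^2 \sum_j (1/\alpha_j^2) = N L_n^{2u} \tau_n^2 L_n^2 \cdot n/\bar\alpha_2$; using $\tau_n^2 = \Delta_n (\log n)^2$ and $n\Delta_n = T$ this becomes $\asymp N L_n^{2u+2}(\log n)^2/\bar\alpha_2$. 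Finally, to pass from a fixed $\theta$ to the supremum I would use a chaining/discretization argument over an $L_n^{-C}$-net of $[\theta_0,\theta_{L_n-1}]$ (the interpolant is piecewise polynomial with derivative bounded by a polynomial in $L_n$ times the coefficient sizes, so the net error is negligibly small at polynomial cost), producing the extra $\sqrt{\log L_n}$ factor from the union bound over $O(L_n^{C})$ points via a sub-exponential maximal inequality. Taking $L^p$ norms throughout gives the bound $L_n^{u+1}\sqrt N \log(n)\sqrt{\log(L_n)/\bar\alpha_2}$, completing the estimate.
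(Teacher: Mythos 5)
Your overall architecture (noise term plus spline-approximation error plus truncation error, with Lemma \ref{Lem_infinite_bound_deriv_interpolation} costing $L_n^u$, Laplace tail bounds plus a union bound over the grid giving the $\sqrt{\log L_n}$, and Lemma \ref{l: bound proba} absorbing the truncation into $L_n^u N/n^r$) is essentially the paper's, and your treatment of the noise and truncation pieces would go through. The genuine gap is in the spline-approximation error term. You propose to bound it ``per term'' by $L_n^{-(a+1-u)}\sup_\theta|\partial_\theta^{a+1}(\cdot)|$ and then sum ``$N$ bounded terms'' controlled via Lemma \ref{l: moment}. But there are $N\times n$ summands, and each has $\partial_\theta^{a+1}f(\theta;X^i_{t_{j-1}},X^i_{t_j})$ of size $|X^i_{t_j}-X^i_{t_{j-1}}|+\Delta_n\asymp\sqrt{\Delta_n}$ in $L^p$; a triangle inequality over $j$ therefore gives $N n\sqrt{\Delta_n}\asymp N\sqrt{n}$, i.e. a bound $N\sqrt{n}\,L_n^{-(a+1-u)}$, which is \emph{not} dominated by the claimed $N L_n^{-(a-u-1)}$ since $L_n$ may grow arbitrarily slowly (only $L_n=O(n^r)$ with $r$ small is assumed). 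To get the stated rate one must exploit the cancellation in $j$: the sum $\sum_j\partial_\theta^{a+1}f$ is a discrete stochastic integral, bounded per individual by BDG/Rosenthal, and the supremum over $\theta$ then requires a Kolmogorov-continuity (chaining in $\theta$) argument; this is exactly what the paper's Step 1 does to prove $\E[\sup_\theta|\partial_\theta^{a+1}S_n^{N,0}(\theta)|^p]\le cN^p$. Your sketch contains neither the martingale/stochastic-integral step nor any chaining for this term (you only chain for the noise term), so as written this part fails by a factor $\sqrt{n}$.

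A second, related problem is that you apply the Hermite error bound to the \emph{truncated} map $\theta\mapsto f\varphi_{\tau_n}(f)$. The public data truncate each derivative separately, i.e. the grid values are $f^{(k)}(\theta_\ell)\varphi_{\tau_n}(f^{(k)}(\theta_\ell))$, which are not the derivatives of a single $\mathcal C^{a+1}$ function, so Proposition \ref{Prop_Hermite_interpolation} does not apply to that array as such; moreover, even for the genuinely truncated function the $\theta$-derivatives of $\varphi(\cdot/\tau_n)$ bring factors $\tau_n^{-1}$, so its $(a+1)$-th derivative is not uniformly bounded as you claim. The paper sidesteps both issues by applying the spline approximation to the \emph{untruncated} contrast $S_n^{N,0}$ and confining every truncation discrepancy to the event $(\Omega^n)^c$, whose probability is $O(n^{-r_0})$ for arbitrary $r_0$ by Lemma \ref{l: bound proba}; you should restructure your decomposition the same way (or restrict to $\Omega^n$ before invoking the interpolation error) for the argument to be sound.
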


\begin{proof}
	{\modar Recall that, according to Section \ref{S: Privatization mechanism}, the contrast function $S^{N,\text{pub}}$ is defined by  
	\begin{equation*}
		S^{N,\text{pub}}_n(\theta)=\sum_{i=1}^n \sum_{j=1}^N H((Z_j^{i,(k)}(\theta_\ell))_{\ell,k})(\theta),
\end{equation*}
	where $H$ is the interpolation operator \eqref{E: interpol spline avant}, fully described in Section \ref{s: splines}. The private {\modch contrast function}
	\begin{equation*}
		S^{N,0}_n(\theta) = \sum_{i=1}^n \sum_{j=1}^N f(\theta;X_{t_{j-1}}^i,X_{t_{j}}^i)
	\end{equation*}
admits the spline projection
	\begin{equation*}
	HS^{N,0}_n(\theta) = \textcolor{black} {H\Big(}\sum_{i=1}^n \sum_{j=1}^N f(\cdot\,;X_{t_{j-1}}^i,X_{t_{j}}^i)\Big)(\theta)
	= \sum_{i=1}^n \sum_{j=1}^N H((f_{j}^{i,\ell,(k)})_{\ell,k})(\theta),
\end{equation*}
where we recall $f_{j}^{i,\ell,(k)}$ is as in \eqref{E: def f_ijlk}.
}	
The proof is now divided in two steps. In the first we evaluate the error committed by approxima\textcolor{black}{ting} $S_n^{N, 0}(\theta)$ with {\modar $H S_n^{N, 0}\textcolor{black}{(\theta)}$,} while in the second we move from {\modar $H S_n^{N, 0}(\theta)$ to $S_n^{N,\text{pub}}\textcolor{black}{(\theta)}$,} the contrast we propose in presence of privacy constraints.
{\modar More formally, the proposition is a consequence of \eqref{E: contrast error part1} and \eqref{E: contrast error part2} below.} \\

\noindent \textbf{Step 1: } \\
{\modar In this part of the proof, we show that, for any $ \textcolor{black}{u\in\id{0}{a}},$
\begin{equation} \label{E: contrast error part1}
\textcolor{black}{\E}\left[ \sup_{\theta  \in {\modar [\theta_0,\theta_{L_n-1}]}} \abs*{ \frac{\partial^u }{\partial \theta^u}
(HS^{N,0}_n(\theta) - S^{N,0}_n(\theta) )
}^p \right] \le {c} \left(\frac{1}{L_n} \right)^{(a-u-1)p} N^p.
\end{equation}
}
{\modar 
\noindent 
We apply Proposition \ref{Prop_Hermite_interpolation} to the function $H{\modch S}^{N,0}_n$ {\modch with $\Lambda = L_n - 1$. Note that here and in the subsequent discussions, for the sake of simplicity in notation, we will replace $L_n - 1$ with $L_n$, as they are asymptotically equivalent.} It follows
\begin{align*}
\sup_\theta \abs*{\frac{\partial^{u}}{\partial\theta^{u}}\Big( H S_n^{N, 0}\textcolor{black}{(\theta)}- S_n^{N, 0}\textcolor{black}{(\theta)} \Big)}\le c \Big(\frac{1}{L_n}\Big)^{a-u-1}\times \sup_{\theta}\Big|\frac{\partial^{a+1}}{\partial\theta^{a+1}}S_n^{N,0}(\theta)\Big|,
\end{align*}
where the supremum on $\theta$, here and below, is on $\theta \in [\theta_0,\theta_{L_n-1}]$.}
Then, 
\begin{align}{\label{eq: 33,5}}
\E\bigg[\sup_{\theta} \Big| \frac{\partial^{u}}{\partial\theta^{u}}\Big( H S_n^{N, 0}(\theta)- S_n^{N, 0} (\theta) \Big) \Big|^p\bigg] \le c \Big(\frac{1}{L_n}\Big)^{(a - u - 1)p} \E\bigg[\sup_{\theta} \Big| \frac{\partial^{a+1}}{\partial\theta^{a+1}} S_n^{N, 0} (\theta) \Big|^p\bigg].
\end{align}
We now aim to demonstrate that, for some constant $c > 0$,
\begin{equation}{\label{eq: goal step 1}}
\E\bigg[\sup_{\theta} \Big| \frac{\partial^{a+1}}{\partial\theta^{a+1}} S_n^{N, 0} (\theta) \Big|^p\bigg] \le c N^p.
\end{equation}
Once established, it will yield the desired result for the approximation of $S_n^{N, 0}$ with its spline function, thereby concluding the proof of Step 1. To achieve this, we begin by analyzing $\partial^{a+1} S_n^{N, 0} (\theta)/\partial\theta^{a+1}$.
Using the definitions of $S_n^{N, 0}$ as given in \eqref{eq: def contrast without privacy} and the function $f$ in \eqref{E: def f apres}, one obtains
\begin{align*}
\Big|\frac{\partial^{a+1}}{\partial\theta^{a+1}} S_n^{N, 0}(\theta)\Big| & { \modar = } 
\abs*{ \sum_{i=1}^N\sum_{j=1}^n \frac{\partial^{a+1}}{\partial\theta^{a+1}} f(\theta; X^i_{t_j}, X^i_{t_j-1}) } \nonumber \\
&\leq \abs[\bigg]{ \sum_{i=1}^N\sum_{j=1}^n\frac{\partial^{a+1}}{\partial\theta^{a+1}}\frac{2b(\theta,X_{t_{j-1}}^{i})\big(X_{t_j}^{i}-X_{t_{j-1}}^{i}\big)}{{\modch \sigma}^{2}(X_{t_{j-1}}^{i})}}+\abs[\bigg]{\sum_{i=1}^N\sum_{j=1}^n \frac{\partial^{a+1}}{\partial\theta^{a+1}}\frac{\Delta_n b^2(\theta,X_{t_{j-1}}^{i})}{{\modch \sigma}^{2}(X_{t_{j-1}}^{i})}}\\
&=:\abs*{S_n^{N,(a+1,1)}(\theta)}+\abs*{S_n^{N,(a+1,2)}(\theta)}. \nonumber
\end{align*}
From Assumption \ref{Ass_2}, ${\modch \sigma}^2$ is lower bounded by ${\modch \sigma}^2_{\min}$. Moreover, Assumption \ref{Ass_3} implies that $\sup_{\theta, i, j} |\frac{\partial^{a+1}}{\partial\theta^{a+1}} b^2(\theta,X_{t_{j-1}}^{i})|^p$ is bounded by some constant $c$. Then, 
\begin{align}{\label{eq: end Snu2}}
 \sup_{{\modar \theta }} \Big|S_n^{N,(a+1,2)}(\theta)\Big|^p & \le c (\Delta_n \, n\, N)^p  \sup_{\theta, i, j} \Big|\frac{\partial^{a+1}}{\partial\theta^{a+1}} b^2(\theta,X_{t_{j-1}}^{i})\Big|^p \nonumber \\
 & \le c N^p.
\end{align}
Let us now move to the analysis of $S_n^{N,(a+1,1)}(\theta)$. We want to control $\E[\sup_{{\modar \theta }} |S_n^{N,(a+1,1)}(\theta)|^p]$ and so, in order to deal with the $\sup$ inside the expectation, we use a Kolmogorov-type argument. Observe we can write $S_n^{N,(a+1,1)}$ in an integral way by introducing the function $\psi_s (n) := \sup \{ j\textcolor{black}{\in\id{0}{n}}: t_j \le s \}$. Then, 
\begin{equation}{\label{eq: Snu1 int form 0.25}}
 S_n^{N,(a+1,1)}(\theta) = \sum_{i = 1}^N \int_0^T  \frac{\partial^{a+1}}{\partial\theta^{a+1}}\frac{ b^2(\theta,X_{\psi_n(s)}^{i})}{{\modch \sigma}^{2}(X_{\psi_n(s)}^{i})} dX_s^i. 
\end{equation}
{\modar Let us set $g(\theta,\theta',x) = \frac{\partial^{a + 1}}{\partial\theta^{a + 1}}\Big(\frac{2 b}{{\modch \sigma}^2}\Big)(\theta,x)- \frac{\partial^{a + 1}}{\partial\theta^{a + 1}}\Big(\frac{2 b}{{\modch \sigma}^2}\Big)(\theta',x)$}.
Thanks to Rosenthal inequality {\modar for centered i.i.d. variables {\modch (see Theorem 3 in \cite{Rosenthal1970})},} it follows that, for all $\theta,\theta' \in \Theta$,
{\modar 
\begin{align}
	\E\bigg[\Big|S_n^{N,(a+1,1)}(\theta)&-S_n^{N,(a+1,1)}(\theta')-\Big(\E\Big[S_n^{N,(a+1,1)}(\theta)-S_n^{N,(a+1,1)}(\theta')\Big]\Big)\Big|^p\bigg] \nonumber
	\\ \nonumber
	&= \E \bigg[ \abs[\Big]{ \sum_{i=1}^N   \int_0^T g(\theta,\theta',X^i_{\psi_n(s)}) dX^i_s - \sum_{i=1}^N
\E \Big[ \int_0^T   g(\theta,\theta',X^i_{\psi_n(s)}) dX^i_s \Big]	
  }^p\bigg] 
\\ \nonumber
& \le c \left( \sum_{i=1}^N 
 \E \bigg[ \abs[\Big]{ \int_0^T   g(\theta,\theta',X^i_{\psi_n(s)}) dX^i_s - 
	\E \Big[ \int_0^T   g(\theta,\theta',X^i_{\psi_n(s)}) dX^i_s \Big]	
}^2\bigg] 
\right)^{p/2} 
\\ \nonumber
&\quad + c \sum_{i=1}^N 
\E \bigg[ \abs[\Big]{ \int_0^T   g(\theta,\theta',X^i_{\psi_n(s)}) dX^i_s - 
	\E \Big[ \int_0^T   g(\theta,\theta',X^i_{\psi_n(s)}) dX^i_s \Big]	
}^p\bigg] 
	\\  \nonumber
& \le c N^{{\modch p}/2 -1} \sum_{i=1}^N 
\E \bigg[ \abs[\Big]{ \int_0^T   g(\theta,\theta',X^i_{\psi_n(s)}) dX^i_s - 
	\E \Big[ \int_0^T   g(\theta,\theta',X^i_{\psi_n(s)}) dX^i_s \Big]	
}^{2}\bigg]^{p/2}  
\\ \nonumber
&\quad + c \sum_{i=1}^N 
\E \bigg[ \abs[\Big]{ \int_0^T   g(\theta,\theta',X^i_{\psi_n(s)}) dX^i_s - 
	\E \Big[ \int_0^T   g(\theta,\theta',X^i_{\psi_n(s)}) dX^i_s \Big]	
}^p\bigg] ,
\intertext{having used Jensen's inequality on the first sum, as \textcolor{black}{$p \ge 2$}. {\modch Then, it is upper bounded by}} \nonumber
&  c (N^{p/2-1 }+1)\sum_{i=1}^N 
\E \bigg[ \abs[\Big]{ \int_0^T   g(\theta,\theta',X^i_{\psi_n(s)}) dX^i_s - 
	\E \big[ \int_0^T   g(\theta,\theta',X^i_{\psi_n(s)}) dX^i_s \big]	
}^{p}\bigg]  
\\ {\label{eq: kolm start 0.5}}  & \le
c (N^{p/2-1 }+1)\sum_{i=1}^N 
\E \bigg[ \abs[\Big]{ \int_0^T g(\theta,\theta',X^i_{\psi_n(s)}) dX^i_s}^p 
\bigg],  
\end{align}
where we used again Jensen inequality in the two last lines, with $p>1$.} 
{\modch Observe that, by the dynamics of $X_s^i$, we have}
\begin{align}{\label{eq: BDG on g, 1}}
\E\bigg[{\modch \Big|}\int_0^T\Big(g(\theta,\theta',X_{\psi_n(s)}^{i})  \Big)dX_s^{i}{\modch \Big|^p}\bigg] 
&\le c \E\bigg[{\modch \Big|}\int_0^T\Big(g(\theta,\theta',X_{\psi_n(s)}^{i}) \Big)b(X_s^i, \theta^{\star}) ds {\modch \Big|^p}\bigg] 
\nonumber \\
& \quad +  c \E\bigg[{\modch \Big|}\int_0^T\Big(g(\theta,\theta',X_{\psi_n(s)}^{i}) \Big){\modch \sigma}(X_s^{i}) dW_s^i{\modch \Big|^p}\bigg] \nonumber\\
 & \le c T^{p - 1} \int_0^T \E \big[{\modch |}g(\theta,\theta',X_{\psi_n(s)}^{i}) {\modch |}^p \big] ds \\
 &\quad+ c T^{\frac{p}{2} - 1} \int_0^T \E\big[{\modch |}g(\theta,\theta',X_{\psi_n(s)}^{i}) {\modch |}^p  |{\modch \sigma}(X_s^i)|^p \big] ds, \nonumber
\end{align}
where we have used Burkholder-Davis-Gundy and Jensen inequalities as well as the boundedness of the drift. Remark now that 
\begin{multline*}\big|g(\theta,\theta',X_{\psi_n(s)}^{i})\big|^p=\abs*{
\frac{\partial^{a+1}}{\partial\theta^{a+1}}\Big(\frac{2 b}{{\modch \sigma}^2}\Big)(\theta,X_{\psi_n(s)}^{i})- \frac{\partial^{a+1}}{\partial\theta^{a+1}}\Big(\frac{2 b}{{\modch \sigma}^2}\Big)(\theta',X_{\psi_n(s)}^{i})}^p
\\= \big|\theta - \theta'\big|^p \Big| \frac{\partial^{a+2}}{\partial\theta^{a+2}}\Big(\frac{2 b}{{\modch \sigma}^2}\Big)\big(\tau \theta + (1 - \tau) \theta', X_{\psi_n(s)}^{i}\big)\Big|^p
\end{multline*}
for some $\tau \in [0, 1]$. Replacing this in \eqref{eq: BDG on g, 1} and recalling that $T$ is a fixed constant, it is easy to check that \eqref{eq: BDG on g, 1} is upper bounded by 
\begin{align*}
&c |\theta - \theta'|^p \int_0^T \big( \E[R_{\psi_n(s)}(1)] + \E[R_{\psi_n(s)}(1)R_s(1)] \big)\textcolor{black}{ds} \\
\le & c |\theta - \theta'|^p,
\end{align*}
where the last follows from the definition of function $R_t$, as in \eqref{eq: def R}. Then, going back to \eqref{eq: kolm start 0.5}, we obtain 
\begin{multline*}
\E\bigg[\Big|S_n^{N,(a+1,1)}(\theta)-S_n^{N,(a+1,1)}(\theta')-\Big(\E\Big[S_n^{N,(a+1,1)}(\theta)-S_n^{N,(a+1,1)}(\theta')\Big]\Big)\Big|^p\bigg]  \\ \le c |\theta - \theta'|^p(N^\frac{p}{2} + N) 
     \le c |\theta - \theta'|^p N^\frac{p}{2},
\end{multline*}
as $p \ge 2$ and $N > 1$. 
Moreover, thanks to \eqref{eq: Snu1 int form 0.25} it is 
\begin{equation*}
	\Big|\E\big[S_n^{N,(a+1,1)}(\theta)-S_n^{N,(a+1,1)}(\theta')\big]\Big|\leq c \sum_{i = 1}^N\Big|\E\Big[ \int_0^T (g(\theta,\theta',X_{\psi_n(s)}^{i})) dX_s^i \Big]\Big| \le cN |\theta - \theta'|, 
\end{equation*}
the validity of the last inequality can be easily confirmed by closely following the reasoning presented above. It follows
\begin{equation*}
	\E\bigg[\Big|S_n^{N,(a+1,1)}(\theta)-S_n^{N,(a+1,1)}(\theta')\Big|^p\bigg]  \le c |\theta - \theta'|^p N^p.
\end{equation*}
Let us introduce the modulus of continuity $\omega_h$ i.e.
\begin{equation*}
\omega_h(f)=\underset{|\theta-\theta'|\leq h}\sup|f(\theta)-f(\theta')|.
\end{equation*}
We apply Kolmogorov's criterion as given by Theorem 2.1 in Chapter 1 in \cite{RevuzYor2005}, to get for all $|h|\le 1$
\begin{equation*}
	\E[(\omega_h(S_n^{N,(a+1,1)}))^p] \le c (h^{1-\varepsilon} N)^p,
\end{equation*}
where $\varepsilon>0$ is any fixed constant. 
%
\noindent
Recall now that the interval $[\theta_0,\theta_{L_n} {\modch -1}]\subset\Theta=[0,1]$ has a radius smaller that $1$,
\begin{align}{\label{eq: end Snu1}}
\E\Big[\sup_{\theta}\big|S_n^{N,(a+1,1)}(\theta)\big|^p\Big] & 
\le c \E\Big[\sup_{\theta}\big|S_n^{N,(a+1,1)}(\theta) - S_n^{N,(a+1,1)}(\theta^\star) \big|^p\Big] + 
c \E\Big[\big| S_n^{N,(a+1,1)}(\theta^\star) \big|^p\Big] \nonumber \\
& \leq c \E[\omega_{1}(S_n^{N,(a+1,1)})^p]+c N^p \le c N^p,
\end{align}
as we wanted. 
The bounds gathered in \eqref{eq: end Snu2} and \eqref{eq: end Snu1} yield \eqref{eq: goal step 1} and therefore conclude the proof of Step 1, as
\eqref{E: contrast error part1} is then a consequence of \eqref{eq: 33,5}.\\

\noindent \textbf{Step 2: } \\
From now on, let us compare $H S_n^{N, 0}$ {\modar with $S_n^{N,\text{pub}}$, and we aim to show that for all $u\in\id{0}{a}$,
\begin{equation} \label{E: contrast error part2}
\norm{\sup_{\theta\in [\theta_0,\theta_{L_n {\modch - 1}}]} \abs*{ \frac{\partial^u}{\partial \theta^u} (H S_n^{N, 0}(\theta) - S_n^{N,\text{pub} }(\theta))} }_{p}
	c \bigg( L_n^{u + 1} \sqrt{N} \log(n) \sqrt{\frac{\log(L_n)}{\bar{\alpha}_2}}\bigg) + c\bigg( \frac{L_n^u N}{n^{r}}\bigg).
\end{equation}
From the definitions of $HS^{N,0}_n$ and $S_n^{N,\text{pub}}$ with the linearity of the operator $H$,
}
{\modar \begin{align}
\nonumber
H S_n^{N, 0} -S_n^{N,\text{pub}}
&= H\bigg(\Big(\sum_{i=1}^N\sum_{j=1}^n f^{i,\ell,(k)}_j)_{k,\ell}\Big)\bigg)-H\bigg(\Big(\sum_{i=1}^N \sum_{j=1}^n Z^{i,(k)}_j(\theta_\ell)\Big)_{k,\ell} \bigg) \\&=H\bigg(\Big(\sum_{i=1}^N\sum_{j=1}^n\beta_j^{i,(k)}(\theta_\ell)\Big)_{k,\ell}\bigg)-H\bigg(\Big(\sum_{i=1}^N\sum_{j=1}^n\mathcal{E}_j^{i,\ell,(k)}\big)_{k,\ell}\bigg)=:H(\bm{\beta})-H(\bm{\mathcal{E}})
\label{eq: def implicite E}
\end{align}
}
where
\begin{equation*}
\beta_j^{i,(k)}(\theta_\ell)=f^{i,\ell,(k)}_{j}\big[1-\varphi_{\tau_n}(f_{j}^{i,\ell,(k)})\big],
\end{equation*}
{\modar and we used \eqref{Eq_private_Z}--\eqref{E: def f_ijlk} and recall that $\mathcal{E}_j^{i,\ell,(k)}$ are the Laplace random variables introduced in Section \ref{S: Privatization mechanism}.}
Let
\begin{equation}{\label{eq: set Omega}}
\Omega^{n}:=\Big\{\omega \in\Omega\,:\,\varphi_{\tau_n}\big(f_{j}^{i,\ell,(k)}\big)(\omega)=1,\,\forall k\leq a,\forall i\in\{1,\dots,N\},\forall j\in\{1,\dots,n\}, \forall \ell\in \{0, \dots , L_n - 1\}\Big\}.
\end{equation}
Note that on $\Omega^n$ we have $H S_n^{N, 0} -S_n^{N,\text{pub}} =-H(\bm{\mathcal{E}})$, 
whereas on $(\Omega^n)^c$,
$H S_n^{N, 0}- S_n^{N,\text{pub}} =H(\bm{\beta})-H(\bm{\mathcal{E}})$.
Then, our goal consists in finding a bound on 
\begin{align}{\label{eq: start step 2}}
 \left \| {\modar  \sup_{\theta}} \Big|\frac{\partial^u}{\partial \theta^u}(H S_n^{N, 0}(\theta)- S_n^{N,\text{pub}})(\theta)\Big| \right \|_p  & = \left \| {\modar  \sup_{\theta}} \Big|\frac{\partial^u}{\partial \theta^u}(H S_n^{N, 0}(\theta)\textcolor{black}{-}S_n^{N,\text{pub}}(\theta))\Big| 1_{\Omega^n} \right \|_p \nonumber\\
 & + \left \| {\modar  \sup_{\theta}} \Big|\frac{\partial^u}{\partial \theta^u}(H S_n^{N, 0}(\theta)-S_n^{N,\text{pub}}(\theta))\Big| 1_{(\Omega^n)^c} \right \|_p \nonumber \\
& \le \left \| {\modar  \sup_{\theta}}\Big|\frac{\partial^u}{\partial \theta^u}H (\bm{\mathcal{E}})(\theta)\Big| \right \|_p + \left \| {\modar  \sup_{\theta}} \Big|\frac{\partial^u}{\partial \theta^u}(H (\bm{\beta})(\theta)-H (\bm{\mathcal{E}})(\theta))\Big| 1_{(\Omega^n)^c} \right \|_p.
\end{align}
%
Let us start considering {\modar an upper bound on $H(\bm{\beta})$. Lemma \ref{Lem_infinite_bound_deriv_interpolation} } provides
\begin{equation}{\label{eq: bound inf norm H(beta) 4}}
\Big\|\frac{\partial^u}{\partial\theta^u} H(\bm{\beta})\Big\|_\infty  = \Big\|\frac{\partial^u}{\partial\theta^u} H\Big( \big(\sum_{i = 1}^N \sum_{j = 1}^n \beta_j^{i, (k)}(\theta_\ell)\big)_{\ell,k} \Big) \Big\|_\infty \leq c L_n^{u}\sup_{\ell,k}\Big|\sum_{i = 1}^N \sum_{j = 1}^n \beta_j^{i, (k)}(\theta_\ell)\Big|.
\end{equation}
Observe that, according to the definition of $\beta_j^{i, (k)}$ and $f^{i,\ell,(k)}_{j}$ it is 
\begin{align*}
\sup_{\ell,k}\Big|\sum_{i = 1}^N \sum_{j = 1}^n \beta_j^{i, (k)}(\theta_\ell)\Big|
&\leq \sum_{i=1}^N\sum_{j=1}^n2\sup_{\ell,k}|f^{(k)}(\theta_\ell,X_{t_{j-1}}^{i},X_{t_{j}}^{i})|\\
&\leq c \sum_{i=1}^N\sum_{j=1}^n\Big[|X_{t_{j}}^{i}-X_{t_{j-1}}^{i}|+\Delta_n\Big], 
\end{align*}
{\modar where we used the definition of $f$ recalled in \eqref{E: def f apres} and the fact that
${\modch \sigma}^2$ is lower} bounded thanks to Assumption \ref{Ass_2} and that the derivatives of $b$ with respect of $\theta$ are bounded because of Assumption \ref{Ass_3}. 
Then, second point of Lemma \ref{l: moment} ensures that
\begin{align}\nonumber
\Big\|\sup_{\ell,k}\Big|\sum_{i = 1}^N \sum_{j = 1}^n \beta_j^{i, (k)}(\theta_\ell)\Big| \Big\|_{p} &\leq c \sum_{i=1}^N\sum_{j=1}^n\big\||X_{t_{j}}^{i}-X_{t_{j-1}}^{i}|+\Delta_n\big\|_{p}\\
\label{eq: bound Lp beta 5}
&\leq c \sum_{i = 1}^N \sum_{j = 1}^n\big(\Delta_n^{1/2}+\Delta_n)\leq c N \sqrt n, 
\end{align}
having also used that $n = T/\Delta_n$ and $T$ is a fixed constant. From \eqref{eq: bound inf norm H(beta) 4} and \eqref{eq: bound Lp beta 5} follows
\begin{equation}{\label{eq: end H(beta) 5.5}}
\E\bigg[ \sup_{{\modar \theta}} \Big|\frac{\partial^u}{\partial \theta^u}|H(\bm{\beta})(\theta)|\Big|^p\bigg]\leq c L_n^{up}\E\Big[\sup_{\ell,k}\Big|\sum_{i = 1}^N \sum_{j = 1}^n \beta_j^{i, (k)}(\theta_\ell)\Big|^p\Big] \le c L_n^{up} N^p n^{p/2}, 
\end{equation}
that concludes the analysis of $H(\bm{\beta})$. \\
Let us now study $H(\bm{\mathcal{E}})$. Lemma \ref{Lem_infinite_bound_deriv_interpolation}
with the definition of $H(\bm{\mathcal{E}})$ through \eqref{eq: def implicite E} provides
\begin{equation}{\label{eq: start H E 6}}
\Big\|\frac{\partial^u}{\partial \theta^u}H(\bm{\mathcal{E}})\Big\|_\infty \leq c L_n^u \sup_{\ell,k}\Big|\sum_{i=1}^N\sum_{j=1}^n \mathcal{E}_{j}^{i,\ell,(k)}\Big|,
\end{equation}
where we recall that $\mathcal{E}_{j}^{i,\ell,(k)}\sim[ 2 \tau_n L_n(a+1) /\alpha_j]\cL(1)$ and $\tau_n=\sqrt{\Delta_n}\log(n)$. 
Let us now introduce the random variables $\overset{\circ} {\mathcal{E}}_{j}^{i,\ell,(k)}$ such that
\begin{equation}{\label{eq: norm E 7}}
\mathcal{E}_{j}^{i,\ell,(k)}={2 \tau_n L_n {\modar (a+1)}}\overset{\circ}{\mathcal{E}}_{j}^{i,\ell,(k)}=2 \sqrt{\Delta_n}\log(n)L_n
{\modar (a+1)} \overset{\circ}{\mathcal{E}}_{j}^{i,\ell,(k)},
\end{equation}
where we have also replaced the definition of $\tau_n = \sqrt{\Delta_n} \log(n) = \sqrt{T/n} \log (n)$. The random variables $\overset{\circ} {\mathcal{E}}_{j}^{i,\ell,(k)}$ are i.i.d. whose law is $\cL(1/\alpha_j)$. It is\\
Then, for any fixed $k,\ell$ we have
\begin{align}{\label{eq: normalized laplace 7}}
\sum_{i=1}^N\sum_{j=1}^n\mathcal{E}_{j}^{i,\ell,(k)}& ={{\modar 2 \sqrt{T}(a+1)}\log(n) L_n \sqrt N}\frac{1}{\sqrt N}\frac{1}{\sqrt n}{\modar \sum_{i=1}^N}\sum_{j=1}^n \overset{\circ}{\mathcal{E}}_{j}^{i,\ell,(k)}.
\end{align}
Let us now state a control on the sum of Laplace random variables. Its proof can be found in Section \ref{s: proof preliminary}. 
\begin{lemma}{\label{l: Petrov}}
	Let \textcolor{black}{$U\in\N^*$} and let us introduce a collection of independent random variables $\textcolor{black}{\bs{\mathcal U}=(\mathcal{U}_h)_{1\le h\le U}}$ such that for $h \in \id{1}{U}$, $\mathcal{U}_h\sim\mathcal{L}(1/\textcolor{black}{\gamma_h})$. \color{black} The  harmonic mean of $\gamma_1, ... , \gamma_U$ is denoted by $\bar{\gamma}^2$ and satisfies $1/\textcolor{black}{\bar{\gamma}^2} := U^{-1} \sum_{h = 1}^U 1/\textcolor{black}{\gamma_h}^2$, and we set $S_U := \sum_{h = 1}^U \textcolor{black}{\mathcal{U}_h}$. \color{black}
 Then, 
	\begin{displaymath}
		\P\bigg(\frac{|S_U|}{\sqrt{U}} \geq \lambda \bigg) \leq \left\{
		\begin{array}{rcl}
			2 e^{-\lambda^2 \textcolor{black}{\bar{\gamma}^2}/8}  & \text{if} & 0 \le  \lambda \le 2 \gamma_{\max} \sqrt{U}/\textcolor{black}{\bar{\gamma}}^2 \\
			\textcolor{black}{2}e^{-\textcolor{black}{\gamma_{\max}} \lambda \sqrt{U}/4} & \text{if} &  \lambda \ge2 \gamma_{\max} \sqrt{U}/\bar{\gamma}^2,
		\end{array}\right.   
	\end{displaymath}
	where we have also introduced $\textcolor{black}{\gamma_{\max}} := \max_{h = 1, \dots , U} \gamma_h$.
\end{lemma}
{\modar We apply this lemma to \eqref{eq: normalized laplace 7}. To this end, we define the sequence $(\gamma_h)_{h=1,\dots,nN}$ such that for all $h\in\id{1}{nN}$ that writes $h=j+kN$ with $j\in\id{1}{n}$ and $k\in\id{0}{N-1}$, we have $\gamma_h=\alpha_j$ and also set $\mathcal{U}_h=\overset{\circ}{\mathcal{E}}^{i,\ell,(k)}_j$.
Then, for any fixed $k\in\id{0}{a},\ell\in\id{0}{L_n {\modch -1}}$, we have
\begin{equation}
	\sum_{i=1}^N\sum_{j=1}^n\mathcal{E}_{j}^{i,\ell,(k)}
	= 2 \sqrt{T}(a+1)\log(n) L_n \sqrt{N} \frac{1}{\sqrt{U}} \sum_{h = 1}^U \mathcal{U}_{h}^{\ell,(k)}, \nonumber 
\end{equation}
where we have introduced $U:= N n$.	
}

\noindent Using the definition of the $\gamma_h$'s, 
we get
\begin{equation*}
\frac{1}{\bar{\alpha}_2} = \frac{1}{n} \sum_{j = 1}^n \frac{1}{\alpha_j^2} = \frac{1}{N n} \sum_{i = 1}^N \sum_{j = 1}^n \frac{1}{\alpha_j^2} = \frac{1}{U} \sum_{h = 1}^U \frac{1}{\gamma_h^2}=\frac{1}{\bar{\gamma}^2}.
\end{equation*}
Noting that we also have $\alpha_{\max}=\gamma_{\max}$, the application of Lemma \ref{l: Petrov}  directly provides 
\begin{align} \nonumber
\P\bigg(\frac{1}{\textcolor{black}{\sqrt{Nn}}}\sum_{i=1}^N\sum_{j=1}^{n}\overset{\circ}{\mathcal{E}}_{j}^{i,\ell,(k)} \geq \lambda \bigg)
&=\P\bigg(\frac{1}{\textcolor{black}{\sqrt{U}}}\sum_{h=1}^{U}\mathcal{U}_{h}^{\ell,(k)} \geq \lambda \bigg)
 \\ {\label{eq: end Petrov}}
 &\leq \left\{
\begin{array}{rcl}
   2 e^{-\lambda^2 \bar{\alpha}_2 /8}  & \text{if} & 0 \le  \lambda \le \frac{2 \alpha_{\max} \sqrt{N n}}{\bar{\alpha}_2} \\
   {\modch 2} e^{-\alpha_{\max}  \lambda \sqrt{Nn}/4} & \text{if} &  \lambda \ge \frac{{\modch 2}\alpha_{\max} \sqrt{N n}}{\bar{\alpha}_2}.
\end{array}\right.   
\end{align}
Let us now introduce a threshold $M_n$ that will be defined later by \eqref{Eq: Mn} {\modar and satisfies $\overline{\alpha}_2 M_n^2 \to \infty$}. 
Then,
\begin{align*}
&\E\bigg[\sup_{\ell,k}\Big|\frac{1}{\sqrt{Nn}}\sum_{i=1}^N\sum_{j=1}^{n}\overset{\circ}{\mathcal{E}}_{j}^{i,\ell,(k)}\Big|^p\bigg] \\
&=\E\Bigg[\sup_{\ell,k}\Big|\frac{1}{\sqrt{Nn}}\sum_{i=1}^N\sum_{j=1}^{n}\overset{\circ}{\mathcal{E}}_{j}^{i,\ell,(k)}\Big|^p\mathbf 1_{\big\{\sup_{\ell,k}|\frac{1}{\sqrt{Nn}}\sum_{i=1}^N\sum_{j=1}^{n}\overset{\circ}{\mathcal{E}}_{j}^{i,\ell,(k)}|< M_n\big\}}\Bigg]\\
& \qquad +\E\Bigg[\sup_{\ell,k}\Big|\frac{1}{\sqrt{Nn}}\sum_{i=1}^N\sum_{j=1}^{n}\overset{\circ}{\mathcal{E}}_{j}^{i,\ell,(k)}\Big|^p\mathbf 1_{\big\{\sup_{\ell,k}|\frac{1}{\sqrt{Nn}}\sum_{i=1}^N\sum_{j=1}^{n}\overset{\circ}{\mathcal{E}}_{j}^{i,\ell,(k)}|\geq M_n\big\}}\Bigg]\\
&\leq M_n^p+\E\Bigg[\sup_{\ell,k}\Big|\frac{1}{ \sup_{\ell,k}\sqrt{Nn}}\sum_{i=1}^N\sum_{j=1}^{n}\overset{\circ}{\mathcal{E}}_{j}^{i,\ell,(k)}\Big|^p\mathbf 1_{\big\{|\frac{1}{\sqrt{Nn}}\sum_{i=1}^N\sum_{j=1}^{n}\overset{\circ}{\mathcal{E}}_{j}^{i,\ell,(k)}|\geq M_n\big\}}\Bigg] \\
&\leq M_n^p+\textcolor{black}{p}\int_{M_n}^{\infty}\lambda^{p-1}\P\bigg(\sup_{\ell,k}\Big|\frac{1}{\sqrt{Nn}}\sum_{i=1}^N\sum_{j=1}^{n}\overset{\circ}{\mathcal{E}}_{j}^{i,\ell,(k)}\Big|\geq\lambda\bigg)d\lambda.
\end{align*}
We now use that 
$$\mathbb{P} \bigg( \sup_{\ell \in \{0, \dots , L_n - 1\}, \, k \in \{0, \dots , a \} } \Big|\frac{1}{\sqrt{Nn}}\sum_{i=1}^N\sum_{j=1}^{n}\overset{\circ}{\mathcal{E}}_{j}^{i,\ell,(k)}\Big| \geq\lambda \Big) \le \sum_{\ell = 0}^{L_n - 1} \sum_{k = 0}^a \mathbb{P} \Big(  \Big|\frac{1}{\sqrt{Nn}}\sum_{i=1}^N\sum_{j=1}^{n}\overset{\circ}{\mathcal{E}}_{j}^{i,\ell,(k)}\Big|\geq\lambda \bigg). $$
From \eqref{eq: end Petrov} follows
\begin{align*}
\E\Bigg[\sup_{\ell,k}\Big|\frac{1}{\sqrt{Nn}}\sum_{i=1}^N\sum_{j=1}^{n}\overset{\circ}{\mathcal{E}}_{j}^{i,\ell,(k)}\Big|^p\Bigg] 
& \le M_n^p+ \sum_{\ell = 0}^{L_n - 1} \sum_{k = 0}^a \int_{M_n}^{\infty}\textcolor{black}{p}\lambda^{p-1}\P\bigg(\frac{1}{\sqrt{Nn}}\sum_{i=1}^N\sum_{j=1}^{n}\overset{\circ}{\mathcal{E}}_{j}^{i,\ell,(k)}\geq\lambda\bigg)d\lambda\\
&\leq M_n^p+ \textcolor{black}{p} L_n(a +1) \int_{M_n}^{\frac{2 \alpha_{\max} \sqrt{N n}}{\bar{\alpha}_2}}2\lambda^{p-1}e^{-\lambda^2 \bar{\alpha}_2/8}d\lambda \\
& \qquad + \textcolor{black}{p}L_n(a +1)\int_{\frac{2 \alpha_{\max} \sqrt{Nn}}{\bar{\alpha}_2}}^\infty2\lambda^{p-1}e^{-(\lambda \alpha_{\max} \sqrt{Nn})/4}d\lambda,
\end{align*}
{\modar where the first integral is zero if $M_n > \frac{2 \alpha_{\max} \sqrt{N n}}{\bar{\alpha}_2}$.}
Now remark that {\modar this} first integral is bounded by 
\begin{equation*}
  \int_{M_n}^{\infty}2\lambda^{p-1}e^{-\lambda^2 \bar{\alpha}_2/8}d\lambda {\le c M_n^{{\modar p}} e^{- M_n^2 \bar{\alpha}_2/8}.}  
\end{equation*}

%
Regarding the second integral, we apply the change of variable $\lambda' := \lambda \alpha_{\max} \sqrt{Nn}$, that provides it is equal to 
$$(\alpha_{\max} \sqrt{Nn})^{- p} \int_{\frac{2 \alpha_{\max}^2 {Nn}}{\bar{\alpha}_2}}^\infty 2(\lambda')^{p-1}e^{-\lambda'/4}d\lambda' {\le c}\Big((\alpha_{\max} \sqrt{Nn})^{- p} \Big(\frac{ \alpha_{\max}^2 {Nn}}{\bar{\alpha}_2}\Big)^{p-1} e^{-\frac{ \alpha_{\max}^2 {Nn}}{4 \bar{\alpha}_2}}\Big).$$
It yields 
\begin{align*}
\E\Bigg[\sup_{\ell,k}\Big|\frac{1}{\sqrt{Nn}}\sum_{i=1}^N\sum_{j=1}^{n}\overset{\circ}{\mathcal{E}}_{j}^{i,\ell,(k)}\Big|^p\Bigg] 
& \le M_n^p + c L_n M_n^{{\modar p}} e^{- \frac{M_n^2 \bar{\alpha}_2}{8}} + 
c \frac{L_n}{(\alpha_{\max}^2 Nn)^{p/2}} 
\big(\frac{\alpha_{\max}Nn}{\bar{\alpha}_2}\big)^{p-1} e^{-\frac{ \alpha_{\max}^2 {Nn}}{4 \bar{\alpha}_2}}.  
\end{align*}
Comparing the first two terms in the right hand side of the equation above leads us to the choice 
\begin{equation}\label{Eq: Mn}
M_n^2 = \kappa \log(L_n)/\bar{\alpha}_2,
\end{equation}
for a constant $\kappa$ arbitrarily large. It implies
\begin{align}\label{eq: norme p sum Laplace renormalisee}
\E\Bigg[\sup_{\ell,k}\Big|\frac{1}{\sqrt{Nn}}\sum_{i=1}^N\sum_{j=1}^{n}\overset{\circ}{\mathcal{E}}_{j}^{i,\ell,(k)}\Big|^p\Bigg] & \le c\Big(\frac{\log(L_n)}{\bar{\alpha}_2} \Big)^{\frac{p}{2}}, 
\end{align}
as the the second and third terms are negligible compared to the first 
using $L_n=o(n^r)$ for some $r>0$. \textcolor{black}{Combining \eqref{eq: start H E 6} and \textcolor{black}{\eqref{eq: norme p sum Laplace renormalisee} and recalling the definition \eqref{eq: norm E 7} of the variables $\overset{\circ}{\mathcal{E}}_{j}^{i,\ell,(k)}$}, we deduce}
\begin{equation}{\label{eq: end HE 8}}
\E\Bigg[ \sup_{{\modar \theta}} \Big|\frac{\partial^u}{\partial \theta^u}|H({\bm{\mathcal{E}}})(\theta)|\Big|^p\Bigg]\leq c L_n^{up} (\sqrt{N} \log(n) L_n)^p \Big(\frac{\log(L_n)}{\bar{\alpha}_2} \Big)^{\frac{p}{2}}.   
\end{equation}
Replacing \eqref{eq: end H(beta) 5.5} and \eqref{eq: end HE 8} in \eqref{eq: start step 2} we obtain
\begin{align} \nonumber
 &\left \| \sup_{{\modar \theta}} |\frac{\partial^u}{\partial \theta^u}(H S_n^{N, 0}(\theta)-{\modar S^{N,\text{pub}}_n}(\theta))| \right \|_p \\
 &\label{E: step 2 prop almost finished}
 \le c L_n^{u + 1} \sqrt{N} \log(n) \sqrt{\frac{\log(L_n)}{\bar{\alpha}_2}} + c n^{- {\modar r_0/2}}\bigg[L_n^{u + 1} \sqrt{N} \log(n) \sqrt{\frac{\log(L_n)}{\bar{\alpha}_2}} + L_n^{u} n^{\frac{1}{2}}N\bigg],
\end{align}
having used Cauchy-Schwarz inequality and the fact that 
{\modar \begin{equation}\label{E: majo comple Omega n}
	\mathbb{P}(\Omega_n^c) \le cn^{- r_0}
	\end{equation}} 
for any $r_0 \ge 2$. {\modar 
	Indeed, thanks to  Lemma \ref{l: bound proba},}
\begin{align*}
\mathbb{P}(\Omega_n^c) & = \mathbb{P}\big(\exists \bar{k}, \bar{i}, \bar{\ell}, \bar{j}: \, |f^{\bar{i},\bar{\ell},(\bar{k})}_{\bar{j}}| > \tau_n \big) \\
& = \E\Big[ \E_{t_{\bar{j}\textcolor{black}{-1}}}\Big[\mathbf 1_{ |f^{\bar{i},\bar{\ell},(\bar{k})}_{\bar{j}}| > \tau_n}\Big]\Big] \\
& \le c \E\Big[\frac{\Delta_n^{\frac{r_0}{2}}}{(\log(n))} R_{t_{\bar{j}\textcolor{black}{-1}}}(1) + \exp(- c(\log(n))^2)\Big], 
\end{align*}
that {\modar goes to zero  at a rate given by any arbitrarily large exponent of $1/n$, recalling that $\Delta_n=T/{\modch n}$, with fixed $T$.} 
\\
It concludes the proof of {\modar Step 2, as \eqref{E: contrast error part2} is an immediate consequence of \eqref{E: step 2 prop almost finished}}. The proposition is therefore proven.
\end{proof}

We are now ready to prove the consistency of the proposed estimator based on its analogous result in the privacy-free context and on the approximation argument presented in Proposition \ref{prop: approx contrast}.

\begin{proof}[Proof of Theorem \ref{th: consistency}] 
In order to show the consistency we will prove that 
\begin{equation}{\label{eq: conv contrast}}
\frac{1}{N}({\modar S^{N,\text{pub}}_n}(\theta) - {\modar S^{N,\text{pub}}_n}(\theta^{\star})) \xrightarrow{\mathbb{P}} {\modar -} \int_0^T\E\bigg[\frac{\big(b(\theta,X_s)-b(\theta^{\star},X_s)\big)^2}{\sigma^2(X_s)}\bigg]ds =: C_\infty (\theta)
\end{equation}
uniformly in $\theta \in {\modar [\theta_0,\theta_{L_n-1}]=[0,1-1/L_n]}$. {\modar Let us stress that, as $\theta^\star \in(0,1)$, we have 
	$\theta^\star \in [0,1-1/L_n]$ for $n$ large enough.}  \\
Observe that one can write 
\begin{align}{\label{eq: pol drift}}
	\frac{1}{N}({\modar S^{N,\text{pub}}_n}(\theta) - {\modar S^{N,\text{pub}}_n}(\theta^{\star})) = 
\frac{1}{N}({\modar S^{N,\text{pub}}_n}(\theta) - S_n^{N, 0}(\theta)) - \frac{1}{N}({\modar S^{N,\text{pub}}_n}(\theta^{\star}) - S_n^{N,0}(\theta^{\star})) \\+ \frac{1}{N} (S_n^{N,0}(\theta) - S_n^{N,0}(\theta^{\star})).
\end{align}
Proposition \ref{prop: approx contrast} with $u = 0$ ensures that the $L^p$ norm of the first two terms here above is upper bounded, uniformly in $\theta$, by $c (\frac{1}{L_n})^{a - 1} + c \frac{L_n \log(n)}{\sqrt{N}} \sqrt{\frac{\log(L_n)}{\bar{\alpha}_2}} {\modar + c\frac{N}{n^{r_0}}}$, {\modar for any fixed $r_0 >0$. This term goes to $0$ under} our hypothesis. It follows they converge to $0$ in probability as well. Then, it is enough to prove 
\begin{equation}{\label{eq: conv consistency}}
 \frac{1}{N}(S_n^{N,0}(\theta) - S_n^{N,0}(\theta^{\star})) \xrightarrow{\mathbb{P}} C_\infty (\theta)   
\end{equation}
uniformly in $\theta$ to obtain \eqref{eq: conv contrast}. The derivation of \eqref{eq: conv consistency} closely mirrors the argumentation found in Steps 3 and 4 of Lemma 6.1 in \cite{McKean}. It is worth noting that our contrast function $S_n^{N,0}$ differs slightly from $S_n^N$ in \cite{McKean}. In that paper, indeed, the authors aim to jointly estimate both the drift and the diffusion coefficient in a parametric manner, whereas our focus here is solely on estimating the drift. However, similarly as in Step 3 of Lemma 6.1 in \cite{McKean}, {\modar and using \eqref{eq: def contrast without privacy}--\eqref{E: def f apres},} the following decomposition holds:
$$ \frac{1}{N}(S_n^{N,0}(\theta) - S_n^{N,0}(\theta^{\star})) = I_n^N(\theta) + 2 \rho_n^N(\theta),$$
where 
\begin{align*}
 I_n^N(\theta) &= {\modar -}\frac{\Delta_n}{N} \sum_{i = 1}^N \sum_{j = 1}^n \frac{(b(\theta, X_{t_j-1}^i)- b(\theta^{\star}, X_{t_j-1}^i))^2}{{\modch \sigma}^2(X_{t_j-1}^i)}, \\
 \rho^N_n(\theta) & = \frac{1}{N} \sum_{i = 1}^N \sum_{j = 1}^n (X_{t_j}^i - X_{t_j-1}^i - \Delta_n b(\theta^{\star}, X_{t_j-1}^i) ) \frac{b(\theta, X_{t_j-1}^i)- b(\theta^{\star}, X_{t_j-1}^i)}{{\modch \sigma}^2(X_{t_j-1}^i)}.
\end{align*}
Then, $I_n^N(\theta) \xrightarrow{\mathbb{P}} C_\infty(\theta)$ because of Lemma \ref{l: asymptotic 2.5} while $\rho^N_n(\theta) \xrightarrow{\mathbb{P}} 0$ uniformly in $\theta$ as in the proof of Lemma 6.1 in \cite{McKean}. Moreover, Step 4 of Lemma 6.1 in \cite{McKean} ensures the tightness of the two sequences $\theta \mapsto I_n^N(\theta)$ and $\theta \mapsto \rho_n^N(\theta)$, which concludes the proof of \eqref{eq: conv contrast} and implies in particular that 
\begin{equation}{\label{eq: main consistency}}
\sup_{\theta \in {\modar [\theta_0,\theta_{L_n-1}]}} \Big|\frac{1}{N} ({\modar S^{N,\text{pub}}_n}(\theta) - {\modar S^{N,\text{pub}}_n} (\theta^{\star})) - C_\infty(\theta)\Big| \xrightarrow{\mathbb{P}} 0.
\end{equation}
Such equation implies the consistency of $\hat{\theta}_n^N$. Indeed, the identifiability condition stated in Assumption \ref{Ass_4} implies that, for every $\epsilon > 0$, there exists $\eta > 0$ such that $C_\infty(\theta) {\modar <  - \eta}$ for every $\theta$ such that $|\theta - \theta^{\star}| \ge \epsilon$. Then, 
$$\left \{ |\hat{\theta}^N_n - \theta^{\star}| \ge \epsilon \right \} \subset \left \{ C_\infty(\hat{\theta}^N_n) {\modar < - \eta} \right \}.$$
Observe that we can write 
$$C_\infty(\hat{\theta}^N_n) =  \Big( C_\infty(\hat{\theta}^N_n) - \frac{1}{N}({\modar S^{N,\text{pub}}_n}(\hat{\theta}^N_n) - {\modar S^{N,\text{pub}}_n}(\theta^{\star})) \Big) + \frac{1}{N}({\modar S^{N,\text{pub}}_n}(\hat{\theta}^N_n) - {\modar S^{N,\text{pub}}_n}(\theta^{\star})).$$
Now, the first converges to $0$ in probability because of \eqref{eq: main consistency}, while the second is {\modar non-negative} because of the definition of $\hat{\theta}^N_n$. We derive that the probability of the event $\left \{ C_\infty(\hat{\theta}^N_n)  {\modar < - \eta } \right \}$ converges to $0$, which concludes the proof of the consistency.
\end{proof}

\subsection{Asymptotic normality}
In this section, we establish the asymptotic normality of the proposed estimator. The proof follows a classical path, relying on the asymptotic behavior of $\partial_\theta {\modch S^{N,\text{pub}}_n}(\theta)$ and $\partial^2_\theta {\modch S^{N,\text{pub}}_n}(\theta)$ (refer, for instance, to \cite{GenonCatalot_Jacod_1993}, Section 5a). \\
Particularly noteworthy is the observation that the behavior of the first derivative of the contrast function varies depending on whether the contribution of privacy is negligible in our estimation procedure (see Propositions \ref{prop: as norm without privacy} and \ref{prop: as norm privacy} below). Conversely, the behavior of the second derivative remains consistent, irrespective of the privacy's contribution, as elucidated in Proposition \ref{prop: second derivatives} below. The proofs of Propositions \ref{prop: as norm without privacy}, \ref{prop: as norm privacy}, and \ref{prop: second derivatives} can be found in the subsequent subsections.

\begin{prop}{[Negligible contribution of privacy] \\}{\label{prop: as norm without privacy}}
Assume that A\ref{Ass_1}- A\ref{Ass_3}, {\modar A\ref{Ass_splines}.\ref{H: Ass_splines priv neg} and A\ref{Ass_discretization}.\ref{H: Ass_discret priv neg} hold.} Assume moreover that ${\modch \sqrt{\log (L_n)} r_{n,N}} \rightarrow 0$ for $N,n, L_n \rightarrow \infty$. Then, 
$$\frac{1}{\sqrt{N}} \partial_\theta {\modch S^{N,\mathrm{pub}}_n}(\theta^{\star}) \xrightarrow{\mathcal{L}} \textcolor{black}{\mathcal N}\bigg(0,2 \int_0^T \E\bigg[\Big(\frac{\partial_\theta b(\theta^{\star}, X_s)}{{\modch \sigma}(X_s)}\Big)^2\bigg] ds\bigg) = : \textcolor{black}{\mathcal N}(0,2 \Sigma_0).$$   
\end{prop}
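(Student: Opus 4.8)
The plan is to first invoke Proposition \ref{prop: approx contrast} to replace $\partial_\theta S^{N,\mathrm{pub}}_n(\theta^{\star})$ by $\partial_\theta S^{N,0}_n(\theta^{\star})$, the first derivative of the privacy-free contrast \eqref{eq: def contrast without privacy}, up to an error that becomes negligible after dividing by $\sqrt N$; and then to establish the central limit theorem for $\tfrac{1}{\sqrt N}\partial_\theta S^{N,0}_n(\theta^{\star})$ by recognizing it, up to a discretization remainder, as a normalized sum of a martingale difference triangular array. Concretely, writing $\tfrac{1}{\sqrt N}\partial_\theta S^{N,\mathrm{pub}}_n(\theta^{\star}) = \tfrac{1}{\sqrt N}\partial_\theta S^{N,0}_n(\theta^{\star}) + \tfrac{1}{\sqrt N}\partial_\theta(S^{N,\mathrm{pub}}_n - S^{N,0}_n)(\theta^{\star})$ and bounding the last term in $L^p$ by $\tfrac{1}{\sqrt N}\|E_{n,N}^{(1)}\|_p$, Proposition \ref{prop: approx contrast} with $u=1$ furnishes a bound of order $\sqrt N/L_n^{a-2} + L_n^2\log(n)\sqrt{\log(L_n)/\bar\alpha_2} + \sqrt N L_n/n^{r}$; the first term vanishes by A\ref{Ass_splines}.\ref{H: Ass_splines priv neg}, the middle one is exactly $\sqrt{\log(L_n)}\,r_{n,N}\to 0$, and the last vanishes since $N$ and $L_n$ are polynomial in $n$ with $r$ arbitrary. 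Hence it suffices to prove $\tfrac{1}{\sqrt N}\partial_\theta S^{N,0}_n(\theta^{\star})\xrightarrow{\mathcal L}\mathcal N(0,2\Sigma_0)$.

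From \eqref{E: def f apres}, $\partial_\theta f(\theta^{\star};x,y)=\tfrac{2\partial_\theta b(\theta^{\star},x)}{\sigma^2(x)}\big(y-x-\Delta_n b(\theta^{\star},x)\big)$, while \eqref{Eq_EDS} gives $X^i_{t_j}-X^i_{t_{j-1}}-\Delta_n b(\theta^{\star},X^i_{t_{j-1}}) = \int_{t_{j-1}}^{t_j}\big(b(\theta^{\star},X^i_s)-b(\theta^{\star},X^i_{t_{j-1}})\big)\,ds + \int_{t_{j-1}}^{t_j}\sigma(X^i_s)\,dW^i_s$. I would then set $\chi^i_j := \tfrac{2\partial_\theta b(\theta^{\star},X^i_{t_{j-1}})}{\sigma^2(X^i_{t_{j-1}})}\int_{t_{j-1}}^{t_j}\sigma(X^i_s)\,dW^i_s$ and, using A\ref{Ass_1}, the boundedness of $\partial_\theta b/\sigma^2$ from A\ref{Ass_2}–A\ref{Ass_3}, and the third estimate of Lemma \ref{l: moment}, show that the drift contribution to $\tfrac{1}{\sqrt N}\partial_\theta S^{N,0}_n(\theta^{\star})-\tfrac{1}{\sqrt N}\sum_{i,j}\chi^i_j$ has $L^1$-norm at most $\tfrac{c}{\sqrt N}\sum_{i,j}\Delta_n^{3/2} = cT\sqrt{N\Delta_n}\to 0$ by A\ref{Ass_discretization}.\ref{H: Ass_discret priv neg}. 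Thus the claim reduces to a CLT for $\tfrac{1}{\sqrt N}\sum_{i=1}^N\sum_{j=1}^n\chi^i_j$.

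To this end I would order the index set $\{(i,j)\}$ lexicographically and take the associated filtration $(\mathcal G_k)$ generated by the initial values and the Brownian paths: since $\partial_\theta b(\theta^{\star},X^i_{t_{j-1}})/\sigma^2(X^i_{t_{j-1}})$ is measurable with respect to the relevant past, $X^i$ is independent of the other copies, and the Itô integral over $[t_{j-1},t_j]$ has zero conditional mean, $(\chi^i_j/\sqrt N)$ is a martingale difference array. The two conditions of the martingale CLT for triangular arrays are then verified as follows. For the conditional variance, $\E_{t_{j-1}}[(\chi^i_j)^2]=\tfrac{4(\partial_\theta b(\theta^{\star},X^i_{t_{j-1}}))^2}{\sigma^4(X^i_{t_{j-1}})}\E_{t_{j-1}}\!\big[\int_{t_{j-1}}^{t_j}\sigma^2(X^i_s)ds\big] = \Delta_n\,\tfrac{4(\partial_\theta b(\theta^{\star},X^i_{t_{j-1}}))^2}{\sigma^2(X^i_{t_{j-1}})}+R_{t_{j-1}}(\Delta_n^{3/2})$, the remainder being controlled by the Lipschitz property of $\sigma$ and Lemma \ref{l: moment}; after summation the remainder is $O(\sqrt{\Delta_n})$ in $L^1$ and, by Lemma \ref{l: asymptotic 2.5} applied to $x\mapsto(\partial_\theta b(\theta^{\star},x)/\sigma(x))^2$, $\tfrac1N\sum_{i,j}\E_{t_{j-1}}[(\chi^i_j)^2]\xrightarrow{\mathbb P}2\Sigma_0$. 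For the Lindeberg condition, the Burkholder–Davis–Gundy inequality together with Lemma \ref{l: moment} gives $\E_{t_{j-1}}[(\chi^i_j)^4]\le c\,\Delta_n^2\,R_{t_{j-1}}(1)$, whence $\tfrac1{N^2}\sum_{i,j}\E_{t_{j-1}}[(\chi^i_j)^4]\le \tfrac{cn\Delta_n^2}{N}=\tfrac{cT\Delta_n}{N}\to 0$. The martingale central limit theorem then yields the asserted Gaussian limit.

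The substantive work for the passage from the public to the privacy-free contrast is already packaged in Proposition \ref{prop: approx contrast}, so within this proof the only delicate point is bookkeeping: organizing the double index $(i,j)$ so that $(\chi^i_j)$ is honestly a martingale difference array — thereby using the independence of the $N$ copies and the martingale property along $j$ simultaneously — and tracking all discretization errors (the drift bias of the increments and the $R_{t_{j-1}}(\Delta_n^{3/2})$ term in the conditional variance) so that precisely the conditions A\ref{Ass_discretization}.\ref{H: Ass_discret priv neg}, A\ref{Ass_splines}.\ref{H: Ass_splines priv neg} and $\sqrt{\log(L_n)}\,r_{n,N}\to 0$ are consumed. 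Once this structure is fixed, the verification of the CLT hypotheses is routine via Lemmas \ref{l: moment} and \ref{l: asymptotic 2.5}.
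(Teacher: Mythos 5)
Your proposal is correct and takes essentially the same route as the paper: Step 1 (Proposition~\ref{prop: approx contrast} with $u=1$, consuming A\ref{Ass_splines}.\ref{H: Ass_splines priv neg}, $\sqrt{\log(L_n)}\,r_{n,N}\to 0$ and the polynomial growth of $N$ and $L_n$ in $n$) is identical, and your Step 2 simply writes out explicitly what the paper delegates to Proposition 6.2 of \cite{McKean} and Theorem 3.2 of \cite{Hall_Heyde}, namely the martingale-difference CLT for $\tfrac{1}{\sqrt N}\partial_\theta S^{N,0}_n(\theta^\star)$ after removing the drift bias via $\sqrt{N\Delta_n}\to 0$ and checking a Lyapunov fourth-moment condition. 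The only point to fix is the constant in the conditional variance: your own display $\E_{t_{j-1}}[(\chi^i_j)^2]=\Delta_n\,4\big(\partial_\theta b(\theta^\star,X^i_{t_{j-1}})/\sigma(X^i_{t_{j-1}})\big)^2+R_{t_{j-1}}(\Delta_n^{3/2})$ yields the limit $4\Sigma_0$ for the summed conditional variances (which is also what the paper's intermediate display gives), not the $2\Sigma_0$ you then assert, so your stated limit carries a factor-of-two bookkeeping discrepancy that the paper's written proof shares.
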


When the contribution of the privacy is significant, instead, the following proposition is in hold.

\begin{prop}{[Significant contribution of privacy] \\}{\label{prop: as norm privacy}}
Assume that A\ref{Ass_1}- A\ref{Ass_3},
 {\modar A\ref{Ass_splines}.\ref{H: Ass_splines priv dominate}, A\ref{Ass_discretization}.\ref{H: Ass_discret priv dominate} and A\ref{Ass_6} hold.}
Assume moreover that ${\modch r_{n,N}} \rightarrow \infty$ for $N, n, L_n \rightarrow \infty$.
Define
\begin{equation*}
N_{n,N}:=\frac{\sqrt{\bar{\alpha}_2}}{\sqrt{N}L_n^2 \log(n)}\frac{1}{4{\modar (a+1)}\sqrt{T}\sqrt{v_n(\theta^{\star})}}	\frac{\partial}{\partial \theta}{\modch S^{N,\mathrm{pub}}_n}(\theta^\star).
\end{equation*}	
Then, we have
\begin{equation*}
	(N_{n,N},v_n(\theta^{\star})) \xrightarrow{\mathcal{L}}(\textcolor{black}{\mathcal N},\overline{v}(s)) ,
\end{equation*}
where \textcolor{black}{$\mathcal N$} is a Gaussian  $\mathcal{N}(0,1)$ variable independent of $S$. We recall that $\bar{\alpha}_2$ is such that $1/\bar{\alpha}_2 = n^{-1} \sum_{j = 1}^n 1/\alpha_j^2$,  $v_n(\theta^{\star})$ {\modhel and $\overline{v}$ have respectively been introduced in \eqref{eq: def vn} and \eqref{E: def hat v}}.
\end{prop}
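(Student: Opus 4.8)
The plan is to reduce $\partial_\theta S^{N,\mathrm{pub}}_n(\theta^\star)$ to the contribution of the interpolated Laplace noise alone, and then to identify that contribution explicitly at $\theta^\star$. Working on the random grid $\Xi_S$ and recalling the identity \eqref{eq: def implicite E}, i.e. $S^{N,\mathrm{pub}}_n=HS^{N,0}_n-H(\bm{\beta})+H(\bm{\mathcal{E}})$ with $\bm{\beta}$ vanishing on the set $\Omega^n$ of \eqref{eq: set Omega}, one gets
\begin{equation*}
\partial_\theta S^{N,\mathrm{pub}}_n(\theta^\star)=\partial_\theta H(\bm{\mathcal{E}})(\theta^\star)+\partial_\theta\big(HS^{N,0}_n-S^{N,0}_n\big)(\theta^\star)+\partial_\theta S^{N,0}_n(\theta^\star)-\partial_\theta H(\bm{\beta})(\theta^\star).
\end{equation*}
First I would check that, after multiplication by $\sqrt{\bar\alpha_2}/(\sqrt{N}L_n^2\log n)$, the last three terms are $o_{\mathbb{P}}(1)$: the spline approximation error is controlled by \eqref{E: contrast error part1} with $u=1$, which normalizes to order $\sqrt{N\bar\alpha_2}/(L_n^{a}\log n)\to 0$ under A\ref{Ass_splines}.\ref{H: Ass_splines priv dominate}; the privacy-free derivative $\partial_\theta S^{N,0}_n(\theta^\star)$ splits, as in the setting behind Proposition~\ref{prop: as norm without privacy}, into a martingale part of size $O_{\mathbb{P}}(\sqrt N)$ and a discretization bias of size $O_{\mathbb{P}}(N\sqrt{\Delta_n})$, which normalize to $O_{\mathbb{P}}(1/r_{n,N})\to 0$ (since $r_{n,N}\to\infty$) and $O_{\mathbb{P}}(\sqrt{N\Delta_n\bar\alpha_2}/(L_n^2\log n))\to 0$ by A\ref{Ass_discretization}.\ref{H: Ass_discret priv dominate}; and $\partial_\theta H(\bm{\beta})(\theta^\star)=\partial_\theta H(\bm{\beta})(\theta^\star)\mathbf 1_{(\Omega^n)^c}$, so Cauchy--Schwarz with \eqref{eq: end H(beta) 5.5} and $\mathbb{P}((\Omega^n)^c)\le cn^{-r_0}$ (cf. \eqref{E: majo comple Omega n}) kills it for $r_0$ large. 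Hence the problem reduces to the asymptotics of $\sqrt{\bar\alpha_2}/(\sqrt{N}L_n^2\log n)\,\partial_\theta H(\bm{\mathcal{E}})(\theta^\star)$.

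Second, I would compute $\partial_\theta H(\bm{\mathcal{E}})(\theta^\star)$ exactly. With $\ell_n^\star$ as in \eqref{eq: def vn} and $s_n:=L_n(\theta^\star-\theta_{\ell_n^\star})\in(0,1)$ almost surely, the fact that the support of $B_i^k$ is $[\xi_i,\xi_{i+2}]$ leaves only the splines starting at $i=\ell_n^\star-1$ and $i=\ell_n^\star$ active at $\theta^\star$, so applying \eqref{Eq_approx_Ha}--\eqref{Eq_approx_cik} to the noise data $\mathfrak a_\ell^{(v)}=A_\ell^{(v)}:=\sum_{i=1}^N\sum_{j=1}^n\mathcal{E}_j^{i,\ell,(v)}$,
\begin{equation*}
\partial_\theta H(\bm{\mathcal{E}})(\theta^\star)=\sum_{k=0}^a\Big[c_{\ell_n^\star-1}^k(\bm{\mathcal{E}})\,(B_{\ell_n^\star-1}^k)'(\theta^\star)+c_{\ell_n^\star}^k(\bm{\mathcal{E}})\,(B_{\ell_n^\star}^k)'(\theta^\star)\Big].
\end{equation*}
Using the scaling $B_\ell^k=\overline B_k(L_n(\cdot-\xi_\ell))$, the identity $(g_i^k)^{(p)}\equiv 1$ (it being the leading coefficient of a polynomial of degree $p$) together with $(g_i^k)^{(p-v)}(\xi_{i+1})=O(L_n^{-v})$ for $v\ge1$, and Lemma~\ref{l: spline g} for $\sum_k\overline B_k'$, one finds that the $v=0$ part of the display equals $L_n(2a+1)\binom{2a}{a}s_n^a(1-s_n)^a\big(A^{(0)}_{\ell_n^\star+1}-A^{(0)}_{\ell_n^\star}\big)$ — the two end-point contributions add with opposite sign because $s_n+1\in(1,2)$ — while the $v\ge1$ contributions are $O_{\mathbb{P}}\big(L_n^{2-v}\tau_n\sqrt{Nn}/\sqrt{\bar\alpha_2}\big)$, of order $\sqrt T/L_n$ after normalization, hence negligible. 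Substituting $A^{(0)}_\ell=2\tau_nL_n(a+1)\sum_{i,j}\overset{\circ}{\mathcal{E}}^{i,\ell,(0)}_j$ with $\overset{\circ}{\mathcal{E}}^{i,\ell,(0)}_j\sim\mathcal{L}(1/\alpha_j)$ (see \eqref{eq: norm E 7}) and $\tau_n=\sqrt{T/n}\log n$, and using that $v_n(\theta^\star)=\overline v(s_n)=\big[(2a+1)\binom{2a}{a}s_n^a(1-s_n)^a\big]^2$ so that all spline constants cancel against the normalization, I obtain
\begin{equation*}
N_{n,N}=\frac{\sqrt{\bar\alpha_2}}{2\sqrt{Nn}}\sum_{i=1}^N\sum_{j=1}^n\big(\overset{\circ}{\mathcal{E}}^{i,\ell_n^\star+1,(0)}_j-\overset{\circ}{\mathcal{E}}^{i,\ell_n^\star,(0)}_j\big)+o_{\mathbb{P}}(1).
\end{equation*}

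Third, I would establish a central limit theorem for this triangular array and read off the joint limit. With $D^i_j:=\overset{\circ}{\mathcal{E}}^{i,\ell_n^\star+1,(0)}_j-\overset{\circ}{\mathcal{E}}^{i,\ell_n^\star,(0)}_j$, the $D^i_j$ are independent, centered, with $\Var(D^i_j)=4/\alpha_j^2$, so $\Var\big(\tfrac{\sqrt{\bar\alpha_2}}{2\sqrt{Nn}}\sum_{i,j}D^i_j\big)=1$ directly from $1/\bar\alpha_2=n^{-1}\sum_j\alpha_j^{-2}$; Assumption A\ref{Ass_6} gives $\alpha_j\asymp\sqrt{\bar\alpha_2}$ uniformly in $j$, and finiteness of the $(2+\delta)$-th moment of a Laplace variable makes the Lyapunov ratio $O((Nn)^{-\delta/2})\to0$, whence $\tfrac{\sqrt{\bar\alpha_2}}{2\sqrt{Nn}}\sum_{i,j}D^i_j\xrightarrow{\mathcal{L}}\mathcal{N}(0,1)$. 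Since the Laplace variables are independent of the shift $S$, this CLT holds conditionally on $S$ (hence on $\ell_n^\star$) with the same Gaussian limit whatever the value of $S$, while $v_n(\theta^\star)$ is $\sigma(S)$-measurable with law $\overline v(s)$ by Lemma~\ref{L: law random variance}; conditioning on $S$ then yields $(N_{n,N},v_n(\theta^\star))\xrightarrow{\mathcal{L}}(\mathcal{N},\overline v(s))$ with $\mathcal{N}\sim\mathcal{N}(0,1)$ independent of $S$.

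The hard part is the exact computation of the second step: one must confirm that only the function-value components $A^{(0)}_{\ell_n^\star},A^{(0)}_{\ell_n^\star+1}$ of the noise survive in the limit, and that the accompanying Hermite and $B$-spline constants assemble into precisely $\sqrt{\overline v(s_n)}$ so that they cancel against the prescribed normalization — this is exactly what Lemma~\ref{l: spline g}, together with careful tracking of the powers of $L_n$ produced by differentiating the splines against those absorbed by the coefficients $(g_i^k)^{(p-v)}(\xi_{i+1})$, is designed to deliver. A secondary technical point is making the conditioning-on-$S$ argument rigorous for every remainder term, which is possible since their $L^p$-bounds do not involve $S$.
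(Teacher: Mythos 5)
Your proposal is correct and follows essentially the same route as the paper's proof: isolate the Laplace-noise contribution $\partial_\theta H(\bm{\mathcal{E}})(\theta^\star)$, kill the spline-approximation error, the truncation term and $\partial_\theta S^{N,0}_n(\theta^\star)$ using A\ref{Ass_splines}.\ref{H: Ass_splines priv dominate}, A\ref{Ass_discretization}.\ref{H: Ass_discret priv dominate} and $r_{n,N}\to\infty$, compute the $\nu=0$ part at $\theta^\star$ via the scaling of the splines and Lemma~\ref{l: spline g} to get $L_n\sqrt{\overline v(s_n)}\,(\bm{\mathcal{E}}^{\ell_n^\star+1,(0)}-\bm{\mathcal{E}}^{\ell_n^\star,(0)})$, and conclude by a triangular-array CLT whose Lindeberg/Lyapunov condition rests on A\ref{Ass_6}, with the joint limit obtained from the independence of the noise from $S$. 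Your minor variants (working with $H(\bm{\beta})$ instead of the paper's split of $H(\bm{\gamma})$ on $\Omega^n$, Lyapunov instead of Lindeberg, and conditioning on $S$ rather than the equality-in-law argument) are equivalent to the paper's steps.
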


One can remark that the definition of the estimator guarantees that $\partial_\theta {\modch S^{N,\text{pub}}_n}(\hat{\theta}^N_n) =0$. Thanks to Taylor's formula, it implies 
\begin{equation}{\label{eq: as norm start 3}}
 (\hat{\theta}^N_n - \theta^{\star}) \int_0^1 \partial^2_\theta {\modch S^{N,\text{pub}}_n}(\theta^{\star} + s (\hat{\theta}^N_n - \theta^{\star})) ds = - \partial_\theta {\modch S^{N,\text{pub}}_n}(\theta^{\star}).  
\end{equation}
The asymptotic behaviour of the second derivative of ${\modch S^{N,\text{pub}}_n}$ is gathered in the following proposition.
\begin{prop}{\label{prop: second derivatives}}
Suppose that Assumptions A\ref{Ass_1}- A\ref{Ass_4} are in hold. Assume moreover that $a > 3$ and ${\modar L_n^3} \log(n) \sqrt{\frac{\log(L_n)}{ N \bar{\alpha}_2}} \rightarrow 0$. Then, for any $N, n \rightarrow \infty$ we have 
\begin{enumerate}
    \item $\frac{\partial^2_\theta {\modch S^{N,\mathrm{pub}}_n}(\theta^{\star})}{N} \xrightarrow{\mathbb{P}} \Sigma_0$,
    \item $\frac{1}{N} \sup_{s \le 1} |\partial^2_\theta{\modch S^{N,\mathrm{pub}}_n}(\theta^{\star} + s (\hat{\theta}^N_n - \theta^{\star})) - \partial^2_\theta {\modch S^{N,\mathrm{pub}}_n}(\theta^{\star})| \xrightarrow{\mathbb{P}} 0$.
\end{enumerate}
\end{prop}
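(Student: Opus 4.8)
The plan is to replace the privatised contrast $S^{N,\mathrm{pub}}_n$ by the privacy-free contrast $S^{N,0}_n$ of \eqref{eq: def contrast without privacy}, to compute the limit of $N^{-1}\partial^2_\theta S^{N,0}_n(\theta^\star)$ directly, and then to propagate this limit to a shrinking neighbourhood of $\theta^\star$ via consistency. For the \textbf{reduction step}, I would apply Proposition \ref{prop: approx contrast} with $u=2$: dividing the resulting bound by $N$ gives a quantity of order $L_n^{-(a-3)}+L_n^{3}\log(n)\sqrt{\log(L_n)/(N\bar{\alpha}_2)}+L_n^{2}n^{-r}$, which tends to $0$ because $a>3$, because $L_n^{3}\log(n)\sqrt{\log(L_n)/(N\bar{\alpha}_2)}\to0$ by hypothesis, and because $r$ may be chosen arbitrarily large while $L_n=O(n^{r'})$. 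Hence $N^{-1}\sup_\theta|\partial^2_\theta(S^{N,\mathrm{pub}}_n-S^{N,0}_n)(\theta)|\xrightarrow{\P}0$, so it suffices to establish both assertions with $S^{N,0}_n$ in place of $S^{N,\mathrm{pub}}_n$.

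For \textbf{assertion (1)}, differentiating $f$ of \eqref{E: def f apres} twice in $\theta$ and using $\partial^2_\theta(b^2)=2(\partial_\theta b)^2+2b\,\partial^2_\theta b$ gives
\begin{equation*}
\frac1N\partial^2_\theta S^{N,0}_n(\theta^\star)=\frac2N\sum_{i=1}^N\sum_{j=1}^n\frac{\partial^2_\theta b(\theta^\star,X^i_{t_{j-1}})}{\sigma^2(X^i_{t_{j-1}})}\big(X^i_{t_j}-X^i_{t_{j-1}}\big)-\frac{2\Delta_n}N\sum_{i=1}^N\sum_{j=1}^n\frac{(\partial_\theta b)^2+b\,\partial^2_\theta b}{\sigma^2}(\theta^\star,X^i_{t_{j-1}}).
\end{equation*}
Substituting $X^i_{t_j}-X^i_{t_{j-1}}=\int_{t_{j-1}}^{t_j}b(\theta^\star,X^i_s)\,ds+\int_{t_{j-1}}^{t_j}\sigma(X^i_s)\,dW^i_s$ in the first sum, the stochastic-integral contribution is an average over $i$ of i.i.d., centred random variables with second moments bounded by Lemma \ref{l: moment}, hence it is $o_P(1)$; replacing $\int_{t_{j-1}}^{t_j}b(\theta^\star,X^i_s)\,ds$ by $\Delta_n b(\theta^\star,X^i_{t_{j-1}})$ produces, again by Lemma \ref{l: moment}, an $L^1$-error of order $\sqrt{\Delta_n}\to0$, and the resulting main term cancels exactly the $b\,\partial^2_\theta b$ term of the second sum. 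What survives is $-2\,\frac{\Delta_n}{N}\sum_{i,j}\big(\partial_\theta b(\theta^\star,X^i_{t_{j-1}})/\sigma(X^i_{t_{j-1}})\big)^2+o_P(1)$, which by Lemma \ref{l: asymptotic 2.5} converges in probability to a fixed multiple of $\int_0^T\E[(\partial_\theta b(\theta^\star,X_s)/\sigma(X_s))^2]\,ds$, i.e. to $\Sigma_0$ in the paper's normalisation.

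For \textbf{assertion (2)}, by Theorem \ref{th: consistency} one has $\hat\theta^N_n\xrightarrow{\P}\theta^\star$, so for every $\varepsilon>0$ the event $\{|\hat\theta^N_n-\theta^\star|<\varepsilon\}$ has probability tending to $1$, and on it the whole segment $\{\theta^\star+s(\hat\theta^N_n-\theta^\star):s\in[0,1]\}$ stays in $(\theta^\star-\varepsilon,\theta^\star+\varepsilon)\subset[\theta_0,\theta_{L_n-1}]$ for $n$ large; hence, by the mean value theorem,
\begin{equation*}
\frac1N\sup_{s\le1}\Big|\partial^2_\theta S^{N,0}_n\big(\theta^\star+s(\hat\theta^N_n-\theta^\star)\big)-\partial^2_\theta S^{N,0}_n(\theta^\star)\Big|\le|\hat\theta^N_n-\theta^\star|\cdot\frac1N\sup_\theta\big|\partial^3_\theta S^{N,0}_n(\theta)\big|.
\end{equation*}
The Rosenthal--Burkholder--Davis--Gundy--Kolmogorov argument used in Step 1 of the proof of Proposition \ref{prop: approx contrast} to bound $\E[\sup_\theta|\partial^{a+1}_\theta S^{N,0}_n|^p]$ by $cN^p$ applies verbatim with $\partial^{3}_\theta$ in place of $\partial^{a+1}_\theta$ (using that $\partial^{4}_\theta b$ is bounded by A\ref{Ass_3}, since $a>3$), giving $N^{-1}\sup_\theta|\partial^{3}_\theta S^{N,0}_n|=O_P(1)$; since $|\hat\theta^N_n-\theta^\star|=o_P(1)$, the right-hand side is $o_P(1)$. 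Combining this with $N^{-1}\sup_\theta|\partial^2_\theta(S^{N,\mathrm{pub}}_n-S^{N,0}_n)|=o_P(1)$ from the reduction step yields assertion (2).

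I expect the main obstacle to be the bookkeeping in the first assertion — verifying that the drift$\,\times\,\partial^2_\theta b$ contributions cancel and that each Euler-scheme remainder is negligible after division by $N$ (this uses only the moment bounds of Lemma \ref{l: moment} and Lemma \ref{l: asymptotic 2.5}, but must be done carefully) — together with the elementary yet essential check in the reduction step that the three error terms of Proposition \ref{prop: approx contrast} at order $u=2$ vanish under exactly the hypotheses $a>3$ and $L_n^{3}\log(n)\sqrt{\log(L_n)/(N\bar{\alpha}_2)}\to0$ assumed here, and no more.
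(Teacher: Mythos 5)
Your proposal is correct and shares the paper's skeleton — both reduce to the privacy-free contrast by applying Proposition \ref{prop: approx contrast} with $u=2$, divide by $N$, and check that the three error terms vanish under exactly $a>3$, $L_n^3\log(n)\sqrt{\log(L_n)/(N\bar\alpha_2)}\to0$ and $L_n=O(n^{r'})$ with $r$ arbitrarily large — but the treatment of the non-private part differs. For point (1) the paper simply invokes Proposition 6.3 of \cite{McKean} to get $\frac1N\partial^2_\theta S^{N,0}_n(\theta)\to 2\Sigma(\theta)$ \emph{uniformly} in $\theta$, whereas you redo that computation by hand at $\theta^\star$ (Euler substitution, negligible martingale average over $i$ by Chebyshev, cancellation of the $b\,\partial^2_\theta b$ terms, then Lemma \ref{l: asymptotic 2.5}); this is self-contained and correct, at the price of re-proving an imported result. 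For point (2) the paper combines the uniform-in-$\theta$ convergence with continuity of $\Sigma(\theta)$ at $\theta^\star$ and consistency, while you instead use the mean value theorem together with the bound $N^{-1}\sup_\theta|\partial^3_\theta S^{N,0}_n(\theta)|=O_P(1)$, obtained by rerunning Step 1 of the proof of Proposition \ref{prop: approx contrast} at order $3$ (legitimate, since A\ref{Ass_3} bounds $\partial^4_\theta b$; the parenthetical ``since $a>3$'' is not needed for that). Both routes are valid; the paper's buys uniformity once and reuses it, yours avoids the external citation and the continuity argument at the cost of an extra derivative bound. One bookkeeping remark: your direct computation gives the limit $-2\int_0^T\E[(\partial_\theta b(\theta^\star,X_s)/\sigma(X_s))^2]\,ds$, while the statement reads $\Sigma_0$ and the paper's own proof produces $2\Sigma(\theta^\star)$; this sign/factor-$2$ looseness is already internal to the paper, so your hedge ``in the paper's normalisation'' is acceptable and does not constitute a gap in your argument.
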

From the propositions stated above we easily deduce the asymptotic normality as in Theorems \ref{th: as norm privacy negl} and \ref{th: as norm}. 

\begin{proof}[Proof of Theorem \ref{th: as norm privacy negl}]
In the case of negligible privacy contribution it looks convenient to write \eqref{eq: as norm start 3} as
\begin{displaymath}
(\hat{\theta}^N_n - \theta^{\star}) = - \frac{\sqrt{\frac{1}{N}} \partial_\theta {\modch S^{N,\text{pub}}_n}(\theta^{\star})}{\frac{1}{N} \int_0^1 \partial^2_\theta {\modch S^{N,\text{pub}}_n}(\theta^{\star} + s (\hat{\theta}^N_n - \theta^{\star})) ds } \quad \frac{1}{\sqrt{N}}.
\end{displaymath}
Then, thanks to Propositions \ref{prop: as norm without privacy} and \ref{prop: second derivatives}, Slutsky's theorem and the continuous mapping theorem, 
$$\sqrt{N } (\hat{\theta}^N_n - \theta^{\star}) \xrightarrow[]{\mathcal{L}} \mathcal N\big(0, 2(\Sigma_0)^{-1}\big),$$
as in the statement of Theorem \ref{th: as norm privacy negl}.   
\end{proof}

\begin{proof}[Proof of Theorem \ref{th: as norm}]
In the case of significant privacy contribution we can write \eqref{eq: as norm start 3} as 
\begin{displaymath}
(\hat{\theta}^N_n - \theta^{\star}) = - \frac{\sqrt{\frac{\bar{\alpha}_2}{N}} \frac{1}{L_n^2 \log(n) 4{\modar(a+1)}\sqrt{T}\sqrt{v_n(\theta^{\star})}} \partial_\theta {\modch S^{N,\text{pub}}_n}(\theta^{\star})}{\frac{1}{N} \int_0^1 \partial^2_\theta {\modch S^{N,\text{pub}}_n}(\theta^{\star} + s (\hat{\theta}^N_n - \theta^{\star})) ds } \quad \frac{1}{\sqrt{N \bar{\alpha}_2}} L_n^2 \log(n) 4{\modar(a+1)}\sqrt{T}\sqrt{v_n(\theta^{\star})}.
\end{displaymath}
From here we can deduce, thanks to Propositions \ref{prop: as norm privacy} and \ref{prop: second derivatives}, Slutsky's theorem and the continuous mapping theorem,
$$\frac{\sqrt{N \bar{\alpha}_2}}{L_n^2 \log(n) 4{\modar(a+1)}\sqrt{T}\sqrt{v_n(\theta^{\star})}} (\hat{\theta}^N_n - \theta^{\star}) \xrightarrow{\mathcal{L}} \mathcal N\big(0, (\Sigma_0)^{-2}\big),$$
jointly with the convergence $v_n(\theta^\star) \xrightarrow{\mathcal{L}} \overline{v}(s)$. This concludes the proof of Theorem \ref{th: as norm}.
\end{proof}

The proof of asymptotic normality is concluded by establishing the validity of the propositions stated above. We will begin with the case where privacy is negligible, as this will serve as a foundation for the more intricate proof of the significant privacy case.

\subsubsection{Proof of Proposition \ref{prop: as norm without privacy}}
\begin{proof}
The proof of Proposition \ref{prop: as norm without privacy} heavily relies on the approximation gathered in Proposition \ref{prop: approx contrast}.
We have indeed
$$\frac{1}{\sqrt{N}}\partial_\theta {\modch S^{N,\text{pub}}_n}(\theta^{\star}) = \frac{1}{\sqrt{N}}(\partial_\theta {\modch S^{N,\text{pub}}_n}(\theta^{\star}) - \partial_\theta S_n^{N,0}(\theta^{\star})) + \frac{1}{\sqrt{N}}\partial_\theta S_n^{N,0}(\theta^{\star}).  $$
Proposition \ref{prop: approx contrast} ensures that
\begin{equation}{\label{eq: pol as norm negl}}
\left \| \frac{1}{\sqrt{N}}(\partial_\theta {\modch S^{N,\text{pub}}_n}(\theta^{\star}) - \partial_\theta S_n^{N,0}(\theta^{\star})) \right \|_p \le c \frac{\sqrt{N}}{L_n^{a - 2}} + c L_n^2 \log(n) \sqrt{\frac{\log(L_n)}{\bar{\alpha}_2}} {\modar + c\frac{L_n \sqrt{N}}{n^r}},
\end{equation}
where $r>0$ can be chosen arbitrarily large. It goes to zero under
the hypothesis A\ref{Ass_splines}.\ref{H: Ass_splines priv neg},
	$r_{n,N} \sqrt{\log(L_n)} \to 0$ {\modchi and the fact that both $L_n$ and $N$ go to $\infty$ at a polynomial rate in $n$.}

Then, Proposition \ref{prop: as norm without privacy} is proven once we show that $\frac{1}{\sqrt{N}} \partial_\theta S_n^{N,0}(\theta^{\star}) \xrightarrow{\mathcal{L}} \mathcal N(0, 2 \Sigma_0)$. One can easily check this is implied by Proposition 6.2 in \cite{McKean}. Indeed, similarly as in \cite{McKean}, we can write ${\modar \frac{1}{\sqrt{N}}} \partial_\theta S_n^{N,0}(\theta) = \sum_{j = 1}^n \xi_j^{(1)}(\theta)$, with 
\begin{equation} \label{E: def xi j 1}
	\xi_j^{(1)}(\theta) := \frac{1}{\sqrt{N}} \sum_{i = 1}^N \frac{2 \partial_\theta b(\theta, X^i_{t_{j-1}})}{{\modch \sigma}^2(X^i_{t_{j-1}})}(X^i_{t_{j}}- X^i_{t_{j-1}} - \Delta_n b(\theta, X^i_{t_{j-1}})).
\end{equation}
Remark that the main difference compared to $\xi_{j,h}^{(1)}(\theta)$ in Proposition 6.2 in \cite{McKean} is that in our case the diffusion coefficient ${\modch \sigma}$ does not depend on a second parameter $\theta_2$ and the drift parameter is no longer in $\R^{p_1}$, but simply in $\R$. Moreover, the convergence gathered in (37) in Proposition 6.2 of \cite{McKean} is now replaced by 
\begin{equation*}
\sum_{j = 1}^n \E_{t_{j - 1}}[(\xi_j^{(1)}(\theta^{\star}))^2] \xrightarrow{\mathbb{P}} 4 \int_0^T \E\bigg[\bigg(\frac{\partial_\theta b(\theta^{\star}, X_s)}{{\modch \sigma}(X_s)}\bigg)^2\bigg] ds,   
\end{equation*}
{\modar that can be obtained using Lemma \ref{l: asymptotic 2.5} and $N\Delta_n \to 0$ by Assumption A\ref{Ass_discretization}.\ref{H: Ass_discret priv neg}.}\\
Then, (36) and (40) in Proposition 6.2 of \cite{McKean} provide, for some $\tilde{r} > 0$, 
\begin{equation*}
 \sum_{j = 1}^n | \E_{t_{j - 1}}[\xi_j^{(1)}(\theta^{\star})] |\xrightarrow{\mathbb{P}} 0, \qquad \sum_{j = 1}^n \E_{t_{j - 1}}[(\xi_j^{(1)}(\theta^{\star}))^{2 + \tilde{r}}] \xrightarrow{\mathbb{P}} 0.   
\end{equation*}
The proof of Proposition \ref{prop: as norm without privacy} is therefore concluded by application of Theorem 3.2 in \cite{Hall_Heyde}. 
\end{proof}

\subsubsection{Proof of Proposition \ref{prop: as norm privacy}}
\begin{proof}
We start by observing that, as a consequence of Lemma \ref{Lem_linearity_spline}, for any $i \in \{1, \dots , N \}$ and $j \in \{1, \dots , n \}$ it is
{\modar 
\begin{equation*}
	H(Z_j^i) = H \left(  (f_j^{i,\ell,(k)} \varphi_{\tau_n}(f_j^{i,\ell,(k)}))_{\ell,k}\right)
	+ H\left((\mathcal{E}_{j}^{i,\ell,(k)})_{\ell,k}\right)
\end{equation*}
where, as a reminder, $\mathcal{E}_{j}^{i,\ell,(k)}$  and $f_j^{i,\ell,(k)}$ are defined as in \eqref{E: law Epsilon} and \eqref{E: def f_ijlk}.}
%
%
Then, {\modar from \eqref{Eq_definition_wtSN}} one has 
{\modar
	\begin{align*}
S_n^{N,\text{pub}}(\theta^\star) & = \sum_{i = 1}^N \sum_{j = 1}^n  H \left(  (f_j^{i,\ell,(k)} \varphi_{\tau_n}(f_j^{i,\ell,(k)}))_{\ell,k}\right)(\theta^{\star}) +
 \sum_{i = 1}^N \sum_{j = 1}^nH\left((\mathcal{E}_{j}^{i,\ell,(k)})_{\ell,k}\right) (\theta^{\star}) \\
& =: H (\bm{\gamma}){\modchi (\theta^{\star}) } + H ( \bm{\mathcal{E}}) (\theta^{\star}),
\end{align*}
where 
	$\bm{\gamma}=(\bm{\gamma}^{\ell,k})_{\ell,k}$ and $\bm{\mathcal{E}}=(\bm{\mathcal{E}}^{\ell,(k)})_{\ell,k}$ are given by
	\begin{gather} \label{E:_def gamma k l}
		\bm{{\gamma}}^{\ell,(k)}:= \sum_{i=1}^N\sum_{j=1}^n
		 f_j^{i,\ell,(k)} \varphi_{\tau_n}(f_j^{i,\ell,(k)}),
		\\ \label{Eq_def_bm mathcal E}
		\bm{\mathcal{E}}^{\ell,(k)}:= \sum_{i=1}^N\sum_{j=1}^n \mathcal{E}^{j,\ell,(k)}_i.
	\end{gather}
We will see that the main contribution in 
$\partial_\theta S_n^{N,\text{pub}}(\theta^\star )$ comes 
from $\partial_\theta H(\bm{\mathcal{E}}) (\theta^{\star})$.}
%
{\modar By  Proposition \ref{Prop_Hermite_interpolation_def}, the Hermite interpolation of the Laplace noise term is given by}
	\begin{equation*}
		H(\bm{\mathcal{E}})(\theta)=\sum_{\ell=-1}^{{\modar L_n-2}} \sum_{m=0}^a  c_{\ell}^{m}(\bm{\mathcal{E}}) B_j^m(\theta),
	\end{equation*}
	where the coefficients in the spline decomposition are given by 
	\begin{equation}\label{eq: coeff spline H E}
	c_{\ell}^{m}( \bm{\mathcal{E}} )= \sum_{\nu=0}^a (-1)^\nu (g_\ell^m)^{(2a+1-\nu)}(\theta_{\ell+1})
	\bm{\mathcal{E}}^{\ell+1,(\nu)}, 
\end{equation} 
with the polynomial function $g_\ell^m(\theta)=\frac{1}{(2a+1)!}(\theta-\theta_\ell)^{a-m}(\theta-\theta_{\ell+1})^{a+1}(\theta-\theta_{\ell+2})^m$. The spline function $B_j^m$ is supported on $[\theta_j,\theta_{j+2}]$, where we recall that we have set $\theta_{-1}=\theta_0$ and {\modar $\theta_{L_n}=\theta_{L_n-1}$,} by repeating the two endpoints.

As the spline expansion is constructed upon a high frequency grid, the dominating term in the expression of the coefficients \eqref{eq: coeff spline H E} is the one corresponding to $\nu=0$. Hence, we isolate such contribution.
Remarking that the term corresponding to $\nu=0$ in {\modar the sum \eqref{eq: coeff spline H E} is equal to 
$\bm{\mathcal{E}}^{\ell+1,(0)}$, as $(g^{m}_\ell)^{(2a+1)} \equiv 1$, this leads} us to split $H(\bm{\mathcal{E}})$ into the sum
$H(\bm{\mathcal{E}})=H^0(\bm{\mathcal{E}})+{\modarn \overline{H}}(\bm{\mathcal{E}})$
where the two functions $H^0(\bm{\mathcal{E}})$ and ${\modarn \overline{H}}(\bm{\mathcal{E}})$ are defined by 
\begin{align*}
	H^0({\bm{\mathcal{E}}})&=\sum_{\ell=-1}^{{\modar L_n-2}} \sum_{m=0}^a \bm{\mathcal{E}}^{\ell+1,(0)} B_\ell^m
	\\
	{\modarn \overline{H}}({\bm{\mathcal{E}}})&=\sum_{\ell=-1}^{{\modar L_n-2}} \sum_{m=0}^a \overline{c}_{\ell}^{m}({\bm{\mathcal{E}}}) B_\ell^m,	
\end{align*} 
where
\begin{equation} \label{eq: def overline c}
		\overline{c}_{\ell}^{m}({\bm{\mathcal{E}}})= \sum_{\nu=1}^a (-1)^\nu (g_\ell^m)^{(2a+1-\nu)}(\theta_{\ell+1})
\bm{\mathcal{E}}^{\ell+1,(\nu)}.
\end{equation}
\noindent
The \textcolor{black}{proposition} will be proved if we show
\begin{gather} \label{eq: conv rescaled main term in partial H}
 \left( \frac{\sqrt{\bar{\alpha}_2}}{\sqrt{N}L_n^2 \log(n)}\frac{1}{4{\modar (a+1)}\sqrt{T}\sqrt{v_n(\theta^{\star})}}\frac{\partial}{\partial \theta}H^0(\bm{\mathcal{E}})(\theta^\star), v_n(\theta^\star)
\right)
\xrightarrow{\mathcal{L}}({\modch \mathcal{N}},\overline{v}(S)),
\\ \label{eq: conv rescaled neg term in partial H}
 \frac{\sqrt{\bar{\alpha}_2}}{\sqrt{N}L_n^2 \log(n)}\frac{1}{\sqrt{v_n(\theta^{\star})}}	\frac{\partial}{\partial \theta}{\modarn \overline{H}}(\bm{\mathcal{E}})(\theta^\star) 
\xrightarrow{\mathbb{P}} 0,\\ \label{eq: conv rescaled neg term f in partial H}
\frac{\sqrt{\bar{\alpha}_2}}{\sqrt{N}L_n^2 \log(n)}\frac{1}{\sqrt{v_n(\theta^{\star})}}	\frac{\partial}{\partial \theta}H({\modar \bm{\gamma}})(\theta^\star) 
\xrightarrow{\mathbb{P}} 0.
\end{gather}

We now study the asymptotic behaviour of $\frac{\partial}{\partial \theta} H^0(\bm{\mathcal{E}})(\theta^{\star})$. We recall that $\ell^{\star}_n \in \{0,\dots,{\modar L_{n}-2}\}$ denotes the value of the index such that $\theta^\star \in [\theta_{\ell^\star_n},\theta_{\ell^\star_n+1})$. Since $\theta^\star \in (0,1)$, it is possible to exclude the case ${\modar \ell^\star_n}=0$ or ${\modar \ell^\star_n=L_n-2}$ for $n$ large enough. Using that the support of $B^m_j$ is $[\theta_{j},\theta_{j+2}]$ we deduce, for $\theta \in [\theta_{\ell^{\star}_n},\theta_{\ell^{\star}_n+1})$,
\begin{equation} \label{eq: H^0 sur 2 spline}
		H^0({\bm{\mathcal{E}}})(\theta)=\sum_{\ell\in\{ \ell_n^{\star}-1,\ell_n^{\star}\}} \sum_{m=0}^a \bm{\mathcal{E}}^{\ell+1,(0)} B_\ell^m(\theta).
\end{equation}
We use the scaling properties of the spline functions, to represent $B_\ell^m(\theta)$ for $\ell \in \{1,\dots,{\modar L_n-3}\}$, in the following way
\begin{equation}\label{eq: spline rescaled on 0_2}
	B_\ell^m(\theta)=\overline B^m(L_n(\theta-\theta_\ell)),
\end{equation}
where, \textcolor{black}{for $m\in\id{0}{{\modar a}}$}, $\overline B^m$ is the spline function of order $2a+1$ constructed on the knots $0$, $1$, $2$ where the knot $0$ is repeated $a+1-m$ times, while the knot $1$ is repeated $a+1$ times and knot $2$ is repeated $m+1$ times.  The spline function $\overline B^m$ is supported on $[0,2)$.
By differentiating  \eqref{eq: H^0 sur 2 spline} and using the representation \eqref{eq: spline rescaled on 0_2}, we deduce
\begin{align*} 
	\frac{\partial}{\partial\theta}H^0({\bm{\mathcal{E}}})(\theta^{\star})&=\sum_{\ell\in\{ \ell_n^{\star}-1,\ell_n^{\star}\}} \sum_{m=0}^a \bm{\mathcal{E}}^{\ell+1,(0)} L_n \frac{\partial}{\partial \theta} {\modar \left( \overline B^m \right)}(L_n(\theta^\star-\theta_\ell))
	\\
	&= L_n \bm{\mathcal{E}}^{\ell^{\star}_n,(0)} \sum_{m=0}^a    \frac{\partial}{\partial \theta} {\modar \left( \overline B^m \right)}(L_n(\theta^\star-\theta_{\ell^\star_n-1})) +L_n \bm{\mathcal{E}}^{\ell^{\star}_n+1,(0)} \sum_{m=0}^a    \frac{\partial}{\partial \theta} 
	{\modar \left( \overline B^m \right)}(L_n(\theta^\star-\theta_{\ell^\star_n}))  
	\\
	&= L_n \bm{\mathcal{E}}^{\ell^{\star}_n,(0)} g\Big(L_n\Big(\theta^\star-\theta_{\ell^\star_n}+\frac{1}{L_n}\Big)\Big)
	+ L_n \bm{\mathcal{E}}^{\ell^{\star}_n+1,(0)} g(L_n(\theta^\star-\theta_{\ell^\star_n})),	
\end{align*}
where we have set $g(u)=\sum_{m=0}^a \frac{\partial}{\partial u} \overline B^m(u)$ for $u \in [0,2]$ {\modar and used 
$\theta_{\ell_n^\star-1}=\theta_{\ell_n^\star}-1/L_n$}. From Lemma \ref{l: spline g}, we know that $g(u)=(2a+1) \binom{2a}{a} \left[u^a(1-u)^a \mathbf{1}_{[0,1]}(u)- (u-1)^a(2-u)^a \mathbf{1}_{[1,2]}(u)\right]$. From the definition of {\modar $\overline{v}$,  given in \eqref{E: def hat v},} we deduce that
\begin{equation} \label{eq: derive contrast theta star Laplace}
		\frac{\partial}{\partial\theta}H^0({\bm{\mathcal{E}}})(\theta^\star)=
		{L_n} \overline{v}(L_n(\theta^\star-\theta_{\ell_n^\star}) )^{1/2} \times 
	{\modar 	\left[ \bm{\mathcal{E}}^{\ell^{\star}_n+1,(0)} - \bm{\mathcal{E}}^{\ell^{\star}_n,(0)} \right].}
\end{equation}
Now we prove,
\begin{equation}\label{eq: cv somme Laplace}
	\forall \ell \in \{1,\dots,L_n-3\}, \quad
	{\modar N^\ell_{n,N}:=}\frac{\sqrt{\bar{\alpha}_2}}{\sqrt{N}L_n \log(n)}\frac{1}{4{\modar (a+1)}\sqrt{T}}	\left[ {\modar \bm{\mathcal{E}}^{\ell+1,(0)} -\bm{\mathcal{E}}^{\ell,(0)}} \right] \xrightarrow{\mathcal{L}} \mathcal{N}(0,1).
\end{equation}
We write 
\begin{equation*}
\bm{\mathcal{E}}^{\ell+1,(0)} -\bm{\mathcal{E}}^{\ell,(0)} = \sum_{j=1}^N \sum_{i=1}^n 
\Big(\mathcal{E}^{i,\ell+1,(0)}_j - \mathcal{E}^{i,\ell,(0)}_j\Big) = \frac{2  \sqrt{T}(a+1)\log(n) L_n}{\sqrt{n}}\sum_{j=1}^N \sum_{j=1}^n 
\Big(\overset{\circ}{\mathcal{E}}^{i,\ell+1,(0)}_j -\overset{\circ}{\mathcal{E}}^{i,\ell,(0)}_j\Big)
\end{equation*}
where we used \eqref{eq: norm E 7} and $\tau_n=\sqrt{\Delta_n} \log(n)=\frac{\sqrt{T}}{\sqrt{n}} \log(n)$. It yields,
\begin{equation}\label{eq: sum Laplace pour cv}
\frac{\sqrt{\bar{\alpha}_2}}{\sqrt{N}L_n \log(n)4{\modar (a+1)}\sqrt{T}}	
	\left[ \bm{\mathcal{E}}^{\ell+1,(0)} -\bm{\mathcal{E}}^{\ell,(0)} \right]
	=\sum_{j=1}^N \sum_{i=1}^n \frac{\sqrt{\bar{\alpha}_2}}{2\sqrt{Nn}} 
	\Big(\overset{\circ}{\mathcal{E}}^{i,\ell+1,(0)}_j -\overset{\circ}{\mathcal{E}}^{i,\ell,(0)}_j\Big). 
\end{equation}

For each fixed value of $\ell$, the sequence $\Big(\overset{\circ}{\mathcal{E}}^{i,\ell+1,(0)}_j -\overset{\circ}{\mathcal{E}}^{i,\ell,(0)}_j\Big)_{1\le i\le N,1 \le j \le n}$ is  {\modar constituted} of independent centered variables with variance depending on the index $j$ and equal to $4/\alpha_j^2$.  We deduce
from $\sum_{j=1}^n 1/\alpha_j^2 = n/\bar{\alpha}_2$ that the R.H.S of \eqref{eq: sum Laplace pour cv} has a unit variance. We apply a Central Limit Theorem for triangular array, as Theorem 18.1 of \cite{Billingsley_Book}, to deduce that the R.H.S. of \eqref{eq: sum Laplace pour cv} converges in law to a $\mathcal{N}(0,1)$. To apply this  Central Limit Theorem, it is necessary that the sequence satisfies a Lindeberg condition as given by equation (18.2) in  \cite{Billingsley_Book}. Here, the Lindeberg condition follows from the fact that the variances of all terms in the sum  \eqref{eq: sum Laplace pour cv} are comparable {\modar up to a constant,} using $1\le \frac{\sup_j \alpha_j}{\inf_j \alpha_j} =O(1)$. Consequently, the convergence
\eqref{eq: cv somme Laplace} is proved.

Recalling the definition of {\modar $N^\ell_{n,N}$} in \eqref{eq: cv somme Laplace}, the 
family of random variables {\modar {$(N^\ell_{n,N})_{n\ge 1, 1\le \ell \le L_{n}-3}$}} is such that for all fixed $\ell$, the convergence  ${\modar N^\ell_{n,N}} \xrightarrow[n \to \infty]{\mathcal{L}} \mathcal{N}(0,1)$ holds true. Remark also that from the expression of  {\modar $N^\ell_{n,N}$} as sum of independent Laplace variables, we see that the law of {\modar $N^\ell_{n,N}$} does not depend upon the index $\ell \in \{1,\dots,L_n-3\}$. The sequence of Laplace variables $(\mathcal{E}^{i,\ell,(0)}_j)_{i,\ell,j}$ is independent of the shift variable $S$ and thus is also independent of the random index $\ell^\star_n=\lfloor L_n \theta^\star - S \rfloor$. In entails the independence of the random index $\ell^\star_n$ with the family $({\modar N}^\ell_{n,N})_{\ell,n,N}$. From these descriptions, we deduce the equality in law
\begin{equation*}
	( {\modar  N^{\ell^\star_n}_{n,N}},S) \overset{\mathcal{L}}{=}	({\modar N^{1}_{n,N}},S).
\end{equation*}
Consequently, using the convergence in law of {\modar $N^{1}_n$,} we deduce 
\begin{equation} \label{eq: stable convergence of Z ln}
	({\modar N^{\ell^\star_n}_{n,N}},S) \xrightarrow[]{\mathcal{L}}	(\mathcal{N},S),
\end{equation}
where $\mathcal{N}\sim\mathcal{N}(0,1)$ is a random variable independent of $S$.
Comparing the expression \eqref{eq: derive contrast theta star Laplace} with the left hand side of \eqref{eq: cv somme Laplace}, we have 
\begin{equation*}
\frac{\sqrt{\bar{\alpha}_2}}{\sqrt{N}L_n^2 \log(n)}\frac{1}{4{\modar (a+1)}\sqrt{T}\sqrt{v_n(\theta^{\star})}}\frac{\partial}{\partial \theta}H^0(\bm{\mathcal{E}})(\theta^\star) = N_{n,N}^{\ell_n^\star}.
\end{equation*}
 Since $v_n(\theta^\star)={\modar \overline{v}(L_n(\theta^\star-\theta_{\ell_n^*}))}$ is measurable with respect to the random variable $S$, and with law $\overline{v}(S)$ by Lemma \ref{L: law random variance}, we deduce 
from \eqref{eq: stable convergence of Z ln} that the convergence \eqref{eq: conv rescaled main term in partial H} holds true.\\
\\
Now, we prove \eqref{eq: conv rescaled neg term in partial H}.  We write
\begin{align*}
		\frac{\partial}{\partial \theta}
	{\modarn \overline{H}}({\bm{\mathcal{E}}})(\theta^\star)&=\sum_{\ell=-1}^{L_n-1} \sum_{m=0}^a \overline{c}_{\ell}^{m}({\modar {\bm{\mathcal{E}}}}) 
	\frac{\partial}{\partial \theta}B_\ell^m (\theta^\star)
	\\
	&=\sum_{\ell\in\{\ell_n^\star-1,\ell_n^\star\}} \sum_{m=0}^a \overline{c}_{\ell}^{m}({\modar {\bm{\mathcal{E}}}}) 
	\frac{\partial}{\partial \theta}B_\ell^m (\theta^\star),
\end{align*}
where we used that the support of $B^m_j$ is $[\theta_{j},\theta_{j+2})$ and $\theta^\star \in [\theta_{\ell^{\star}_n},\theta_{\ell^{\star}_n+1})$. For $n$ large enough we have $\ell^{\star}_n \in \{1,\dots,{\modar L_n-3}\}$, 
and thus using \eqref{eq: spline rescaled on 0_2}, we deduce
 $\norm{ \frac{\partial}{\partial \theta} B_\ell^m }_\infty \le cL_n$ for some constant $c$ and $\ell\in\{\ell_n^\star-1,\ell_n^\star\}$ . It yields,
\begin{equation*}
\ab{	\frac{\partial}{\partial \theta}
	{\modarn \overline{H}}({\bm{\mathcal{E}}})(\theta^\star) } \le cL_n (a+1)
\sup_{\ell \in\{\ell_n^\star-1,\ell_n^\star\}} \sup_{m \in \{0,\dots,a\}}  \ab{ \overline{c}_{\ell}^{m} ({\modar {\bm{\mathcal{E}}}})}. 
\end{equation*}
For $\nu \ge 1$, we have $|(g_\ell^m)^{(2a+1-\nu)}(\theta_{\ell+1})| \le c/L_n$, and 
recalling \eqref{eq: def overline c}, it implies $|\overline{c}_{\ell}^{m}
({\modar \bm{\mathcal{E} }}) | 
 \le c(L_n)^{-1} \sup_{\nu \in\{1,\dots,a\}} |\bm{\mathcal{E}}^{\ell+1,(\nu)}|$ for all $\ell\in\{-1,\dots,L_n-2\}$ and $m\in\{0,\dots,a\}$. We deduce,
\begin{align*}
\ab{	\frac{\partial}{\partial \theta}	{\modarn \overline{H}}({\bm{\mathcal{E}}})(\theta^\star) } 
&\le c\sup_{\ell \in\{\ell_n^\star-1,\ell_n^\star\}} \sup_{\nu \in \{1,\dots,a\}} \ab{\bm{\mathcal{E}}^{\ell+1,(\nu)}}
\\
&\le c
\sup_{\ell \in\{0,\dots,L_n-1\}} \sup_{\nu \in \{1,\dots,a\}} 
	 \ab{ \sum_{i=1}^N \sum_{j=1}^n \mathcal{E}^{i,\ell,(\nu)}_j},
\end{align*}
where in the last line we used the definition \textcolor{black}{\eqref{Eq_def_bm mathcal E} of $\bm{\mathcal{E}}^{\ell+1,(\nu)}$.} From this, we derive an upper bound for the $L^2$ norm of $|	\frac{\partial}{\partial \theta}
{\modarn \overline{H}}({\bm{\mathcal{E}}})(\theta^\star)|$:
\begin{align*}
\E\left[ \ab{	\frac{\partial}{\partial \theta}
{\modarn \overline{H}}({\bm{\mathcal{E}}})(\theta^\star) }^2\right]
& \le  
\E\left[ \sup_{\ell \in\{0,\dots,L_n - 1\}} \sup_{\nu \in \{1,\dots,a\}} 
\ab{ \sum_{i=1}^N \sum_{j=1}^n \mathcal{E}^{i,\ell,(\nu)}_j }^2\right]		\\
&	\le c \log(n)^2 N L_n^2
\E\left[ \sup_{\ell \in \{0,\dots,L_n - 1\}} \sup_{\nu \in \{1,\dots,a\}} 
\ab{ \frac{1}{\sqrt{nN}} \sum_{i=1}^N \sum_{j=1}^n \textcolor{black}{\overset{\circ}{\mathcal{E}}^{i,\ell,(\nu)}_j }}^2\right],
\end{align*}
where we used 
\eqref{eq: normalized laplace 7}. Then, \eqref{eq: norme p sum Laplace renormalisee} with $p=2$, gives,
\begin{equation*}
\E\left[ \ab{	\frac{\partial}{\partial \theta}
{\modarn \overline{H}}({\bm{\mathcal{E}}})(\theta^\star) }^2\right] \le c \log(n)^2 N L_n^2  \frac{\log(L_n)}{\bar{\alpha}_2}.
\end{equation*}
We deduce from $1/L_n \to 0$, that $\frac{\sqrt{\bar{\alpha}_2}}{\sqrt{N}L_n^2 \log(n)}\frac{\partial}{\partial \theta}{\modarn \overline{H}}(\bm{\mathcal{E}})(\theta^\star) $ converges to zero in $L^2$-norm and thus in probability. Since, using Lemma \ref{L: law random variance}, the law of $v_n(\theta^\star)$ does not depend on $n$ and $\P(v_n(\theta^\star)>0)=1$, we obtain \eqref{eq: conv rescaled neg term in partial H}. \\
\\
Last step is devoted to the proof of \eqref{eq: conv rescaled neg term f in partial H}. {\modar Applying Lemma \ref{Lem_infinite_bound_deriv_interpolation}, and recalling \eqref{E:_def gamma k l} we have
\begin{align*}
	\abs*{\frac{\partial}{\partial \theta} H(\bm{\gamma})(\theta^\star)} &\le c L_n \sup_{\ell,k} \abs*{ \sum_{i=1}^N \sum_{j=1}^n f_j^{i,\ell,(k)} \varphi_{\tau_n}(f_j^{i,\ell,(k)}) }
	\\ & \le c L_n n N \sup_{i,j,\ell,k} \abs{f_j^{i,\ell,(k)} \varphi_{\tau_n}(f_j^{i,\ell,(k)}) }
	\\ &
	\le c L_n n N \tau_n \le c L_n \sqrt{n} \log(n) N ,
\end{align*}
where we have used $\tau_n=\sqrt{\Delta_n} \log(n) \le c n^{-1/2} \log(n)$. {\modchi Recall we have introduced the set $\Omega^n$ in \eqref{eq: set Omega}, in the proof of Proposition \ref{prop: approx contrast}. The bound above implies}  
\begin{equation*}
	\E \left[ 	\abs*{\frac{\partial}{\partial \theta} H(\bm{\gamma})(\theta^\star)}   \mathbf{1}_{(\Omega^n)^c} \right] \le
	 c L_n \sqrt{n} \log(n) N \P\left((\Omega^n)^c\right) 
	 \le c L_n \sqrt{n} \log(n) N n^{-r_0},	 
\end{equation*}
for any $r_0$ arbitrarily large, recalling \eqref{E: majo comple Omega n}. It entails,
\begin{equation} \label{E: cv partial H gamma Omega comp}
	\frac{\sqrt{\bar{\alpha}_2}}{\sqrt{N}L_n^2\log(n)} 	\abs*{\frac{\partial}{\partial \theta} H(\bm{\gamma})(\theta^\star)}  \mathbf{1}_{(\Omega^n)^c}
	\xrightarrow[]{L^1} 0,
\end{equation}
using that both $L_n$ and $N$ are polynomial in $n$, together with the fact 
that $r_0$ can be as large as needed.

On the set $\Omega^n$ we have for $k \in \id{0}{a}$ and $\ell \in \id{0}{L_n-1}$,
\begin{equation*}
\bm{\gamma}^{\ell,(k)}=\sum_{i=1}^N \sum_{j=1}^n f^{i,\ell,(k)}_j \varphi_{\tau_n}(f^{i,\ell,(k)}_j)=\sum_{i=1}^N \sum_{j=1}^n f^{i,\ell,(k)}_j=\frac{\partial^k }{\partial \theta^k}S_n^{N,0}(\theta_\ell)
\end{equation*} 
 by \eqref{E: def f_ijlk} and \eqref{eq: def contrast without privacy}.
  Thus, on $\Omega^n$, we have $H(\bm{\gamma})=HS_n^{N,0}$ and we can write
\begin{align*}
	\frac{\partial}{\partial \theta} H(\bm{\gamma})(\theta^\star)=\frac{\partial}{\partial \theta} H S_n^{N,0}(\theta^\star)
	&=
	\frac{\partial}{\partial \theta} \left( H S^{N,0}_n- S^{N,0}_n \right) (\theta^\star)  
	+
	\frac{\partial}{\partial \theta} S^{N,0}_n (\theta^\star)  
	\\&= I_1(\theta^\star)+I_2(\theta^\star).
\end{align*}
From \eqref{E: contrast error part1} with $u=p=1$, we get $\E \left[|I_1(\theta^\star)|\bm{1}_{\Omega^n}\right] \le 
\E \left[|I_1(\theta^\star)|\right]
 \le L_n^{2-a} N$. We deduce from A\ref{Ass_splines}.\ref{H: Ass_splines priv dominate}
\begin{equation}\label{E: cv I1 star CLT priv}
	\frac{\sqrt{\bar{\alpha}_2}}{\sqrt{N}L_n^2 \log(n)} I_1(\theta^\star) \to 0
\end{equation}
in $L^1$ and thus in probability.}

\noindent To conclude, let us analyze {\modar $I_2(\theta^{\star})$. Using} the notation {\modar \eqref{E: def xi j 1}} introduced in the proof of Proposition \ref{prop: as norm without privacy}, we have 
\begin{equation*}
\sqrt{{\bar{\alpha}_2}} \frac{1}{L_n^2 \log(n)}\frac{1}{\sqrt{N}} {\modar \frac{\partial}{\partial \theta}}S_n^{N, 0} (\theta^{\star}) 
 = \sqrt{{\bar{\alpha}_2}} \frac{1}{L_n^2 \log(n)} \sum_{j = 1}^n \xi_j^{(1)} (\theta^{\star}) =: \sum_{j = 1}^n \tilde{\xi}_j(\theta^{\star}). 
\end{equation*}
Then, 
$$\sum_{j = 1}^n {\modar \abs{ \E_{t_{j - 1}}[\tilde{\xi}_j(\theta^{\star})]}} = \sqrt{{\bar{\alpha}_2}} \frac{1}{L_n^2 \log(n)} \sum_{j = 1}^n 
{\modar \abs{\E_{t_{j - 1}}[{\xi}_j^{\modhel (1)}(\theta^{\star})]}} \xrightarrow{\mathbb{P}} 0,$$
where {\modar we used that, from the proof of (36) in Proposition 6.2 of \cite{McKean},} we have 
$\E_{t_{j - 1}}[ {\xi}_j^{\modhel (1)}(\theta^{\star})]= R_{t_{j-1}} (\sqrt{N}\Delta_n^{3/2} )$ 
and $\sqrt{{\bar{\alpha}_2}} \frac{1}{L_n^2 \log(n)} \sqrt{N \Delta_n} \rightarrow 0$ {\modar from A\ref{Ass_discretization}.\ref{H: Ass_discret priv dominate}.} 
 \\
Moreover, let us consider 
$$\sum_{j = 1}^n \E_{t_{j - 1}}[(\tilde{\xi}_j(\theta^{\star}))^2] = {\bar{\alpha}_2} \frac{1}{L_n^4 (\log(n))^2} \sum_{j = 1}^n \E_{t_{j - 1}}[({\xi}_j^{\modhel (1)}(\theta^{\star}))^2]. $$ 
{\modar Following the proof of (37) in Proposition 6.2 of \cite{McKean}, we have
{\modchi 
\begin{align*}
 \E_{t_{j - 1}}[({\xi}_j^{\modhel (1)}(\theta^{\star}))^2] & \le \frac{1}{N} \sum_{i_1, i_2 = 1}^N (R_{t_j-1}(\Delta_n)\mathbf 1_{i_1= i_2} + R_{t_j-1}(\Delta_n^2)\mathbf 1_{i_1 \neq i_2}) \\
& \le R_{t_j-1}(\Delta_n + N \Delta_n^2).    
\end{align*}
Hence, $\sum_{j = 1}^n \E_{t_{j - 1}}[(\tilde{\xi}_j(\theta^{\star}))^2]$ converges to $0$ in $L^1$ as ${\bar{\alpha}_2} \frac{1}{L_n^4 \log(n)^2}(1 + N \Delta_n) \rightarrow 0$ because of $r_{n,N} \to \infty$ and 
the hypothesis A\ref{Ass_discretization}.\ref{H: Ass_discret priv dominate}.} Then, Lemma 9 in \cite{GenonCatalot_Jacod_1993} guarantees that
\begin{equation}{\label{eq: conv I2 4}}
 \sqrt{\frac{\bar{\alpha}_2}{N}} \frac{1}{L_n^2 \log(n)} I_2(\theta^{\star}) =  \sum_{j = 1}^n \tilde{\xi}_j(\theta^{\star}) \xrightarrow{\mathbb{P}} 0. 
\end{equation}
From \eqref{E: cv partial H gamma Omega comp}, \eqref{E: cv I1 star CLT priv} and \eqref{eq: conv I2 4}, we obtain the convergence to zero of
$\frac{\sqrt{\bar{\alpha}_2}}{\sqrt{N}L_n^2 \log(n)} \frac{\partial}{\partial \theta}H({\modar \bm{\gamma}})(\theta^\star) $.
It yields \eqref{eq: conv rescaled neg term f in partial H}}  remarking that, because of Lemma \ref{L: law random variance}, the law of $v_n(\theta^{\star})$ does not depend on $n$ and $\mathbb{P}(v_n(\theta^{\star}) > 0) = 1$. 
Then, the proof of the proposition is concluded.
\end{proof}

\subsubsection{Proof of Corollary \ref{cor: threshold}}
\begin{proof}
  {\modch Observe that the proof of Corollary \ref{cor: threshold} heavily relies on the results obtained until this point. Indeed, its proof follows the same route of Theorems \ref{th: as norm privacy negl} and \ref{th: as norm} above. Nevertheless, in this instance, the terms associated with significant and negligible privacy each play a contributory role. 
  	{ {\modarn We start by  writing  \eqref{eq: as norm start 3} as
  		\begin{equation*}
  			(\hat{\theta}^N_n - \theta^{\star}) = - \frac{\sqrt{\frac{\bar{\alpha}_2}{N}}\frac{1}{L_n^2 \log(n)} \partial_\theta {\modch S^{N,\text{pub}}_n}(\theta^{\star})}{\frac{1}{N} \int_0^1 \partial^2_\theta {\modch S^{N,\text{pub}}_n}(\theta^{\star} + s (\hat{\theta}^N_n - \theta^{\star})) ds } \quad  \frac{L_n^2 \log(n)}{\sqrt{\bar{\alpha}_2N}}                    .
  		\end{equation*}
  }	
  	 {\modarn Following	 the proof of Proposition \ref{prop: as norm privacy} above,  one has
\begin{equation*}
	\sqrt{\frac{\bar{\alpha}_2}{N}} \frac{1}{L_n^2 \log(n)} \frac{\partial}{\partial \theta} S_n^{N,\text{pub}}(\theta^\star)
	=\sqrt{\frac{\bar{\alpha}_2}{N}} \frac{1}{L_n^2 \log(n)} \left[\frac{\partial}{\partial \theta} H^0(\bm{\mathcal{E}})(\theta^\star) + \frac{\partial}{\partial \theta} {\modarn \overline{H}}(\bm{\mathcal{E}})(\theta^\star) + \frac{\partial}{\partial \theta} H(\bm{\gamma})(\theta^\star)  \right].
\end{equation*}  	}
The main modification with the proof Proposition \ref{prop: as norm privacy} is that, under the hypothesis of Corollary \ref{cor: threshold}, the convergence result \eqref{eq: conv rescaled neg term f in partial H} does not hold.} Indeed,  $\sqrt{\frac{\bar{\alpha}_2}{N}} \frac{1}{L_n^2 \log(n)} I_2(\theta^{\star})$ no longer converges to $0$.
Instead,  as $\sqrt{{\bar{\alpha}_2}} \frac{1}{L_n^2 \log(n)} \rightarrow c_p$ for $n, N \rightarrow \infty$, Proposition 6.2 of \cite{McKean} directly provides 
$$ \sqrt{\frac{\bar{\alpha}_2}{N}} \frac{1}{L_n^2 \log(n)} I_2(\theta^{\star}) ~\mathop{\sim}_{n,N \to \infty}~ \frac{c_p}{\sqrt{N}} I_2(\theta^{\star}) \xrightarrow{\mathcal{L}} \mathcal{N}(0, 2 c_p^2 \Sigma_0 ) = c_p \mathcal{Z}_1.$$ {\modarn Let us remark that the condition $N \Delta_n \to 0$ necessary
	to apply Proposition 6.2 of \cite{McKean} holds from Assumption A\ref{Ass_discretization}.\ref{H: Ass_discret priv dominate} with $\sqrt{{\bar{\alpha}_2}} \frac{1}{L_n^2 \log(n)} \rightarrow c_p>0$. Then, we deduce, relying on \eqref{E: cv partial H gamma Omega comp}--\eqref{E: cv I1 star CLT priv},
\begin{equation*}
	\sqrt{\frac{\bar{\alpha}_2}{N}} \frac{\partial}{\partial \theta} H(\bm{\gamma})(\theta^\star)  \xrightarrow[]{\mathcal{L}}    c_p \mathcal{Z}_1.
\end{equation*}
Now, \eqref{eq: conv rescaled main term in partial H}--\eqref{eq: conv rescaled neg term in partial H} {\modchi together} with the independence of the variables $(\mathcal{E}^{i,\ell,(k)}_j)_{i,j,\ell,k}$ {\modchi from} the processes $(X^i)_i$ yields
\begin{equation*}
			\sqrt{\frac{\bar{\alpha}_2}{N}} \frac{1}{L_n^2 \log(n)} \frac{\partial}{\partial \theta} S_n^{N,\text{pub}}(\theta^\star)
		\xrightarrow[]{\mathcal{L}}	\mathcal{N}\left(0, 2 c_p^2 \Sigma_0 + 16(a+1)^2 T \overline{v}(S)  \right).
\end{equation*}
} 
%
\noindent The analysis of the second derivatives of the contrast function is due once again to Proposition \ref{prop: second derivatives} that, in the same way as in the proof of Theorems \ref{th: as norm privacy negl} and \ref{th: as norm} above, implies the wanted result {\modar on $\frac{\sqrt{N \bar{\alpha}_2}}{ {\modar  L_n^2 \log(n)}} (\widehat\theta_n^N - \theta^{\star})$.}} 
\end{proof}

\subsubsection{Proof of Proposition \ref{prop: second derivatives}}
\begin{proof}
The main tool consists in the approximation argument gathered in Proposition \ref{prop: approx contrast}, this time for $u = 2$. Indeed, 
$$\frac{1}{{N}}\partial_\theta^2 {\modch S^{N,\text{pub}}_n}({\modar \theta}) = \frac{1}{{N}}\big(\partial^2_\theta {\modch S^{N,\text{pub}}_n}({\modar \theta}) - \partial^2_\theta S_n^{N,0}({\modar \theta})\big) + \frac{1}{{N}}\partial_\theta^2 S_n^{N,0}({\modar \theta}).  $$
Proposition \ref{prop: approx contrast} implies
$$\left \| 
{\modar \sup_{\theta \in [\theta_0,\theta_{L_n-1}]}}
\frac{1}{{N}}(\partial^2_\theta {\modch S^{N,\text{pub}}_n}({\modar \theta}) - \partial^2_\theta S_n^{N,0}({\modar \theta})) \right \|_p \le c \frac{1}{L_n^{a - 3}} + c L_n^3 \log(n) \sqrt{\frac{\log(L_n)}{ N \bar{\alpha}_2}} {\modar + c \frac{L_n \sqrt{N}}{n^r}},$$
{\modar where $r>0$ can be chosen arbitrarily large.  Hence, such $L^p$-norm goes} to $0$ as we have chosen $a > 3$ and $L_n^3 \log(n) \sqrt{\frac{\log(L_n)}{ N \bar{\alpha}_2}} \rightarrow 0$ for $N, n \rightarrow \infty$. Then, following the proof of Proposition 6.3 in \cite{McKean} (see in particular Equation (54)), it is straightforward to check that 
$$\frac{1}{N} \partial^2_\theta S_n^{N,0} (\theta) \xrightarrow{\mathbb{P}} 2 \int_0^T \E\bigg[\Big(\frac{\partial_\theta b(\theta, X_s)}{{\modch \sigma}(X_s)}\Big)^2 - \frac{\partial_\theta^2 b(\theta, X_s)}{{\modch \sigma}^2(X_s)}(b(\theta^{\star}, X_s) - b(\theta, X_s) )\bigg] ds = : 2 \Sigma(\theta)$$
uniformly in $\theta \in \Theta$. 
It 
concludes the proof of the first point. \\
The second point follows from the continuity of $\Sigma(\theta)$ at $\theta = \theta^{\star}$ and the consistency of $\hat{\theta}_n^N$ as proved in Theorem \ref{th: consistency}.   
\end{proof}

\subsection{Proof of the case where the drift is polynomial in $\theta$}\label{App: drift}
{\rev This section focuses on proving the results from Theorem \ref{th: pol drift}. We assume that the drift function takes the form \(b(\theta, x) = b_1(\theta) b_2(x)\), where \(b_1\) is a polynomial of degree at most \(a\). The proofs heavily rely on the fact that, when the drift is a polynomial function of \(\theta\), Proposition \ref{prop: approx contrast} can be replaced by the following result.
}

\begin{prop}{\label{prop: approx drift pol}}
\textcolor{black}{Assume that Assumptions \ref{Ass_1}-\ref{Ass_3} hold. Then, for any \(p \geq 2\) and for any \(u \in \{0, \dots, a\}\),
\begin{align*}
\left\| E_{n,N}^{(u)} \right\|_{p} & \le c \left( L_n^{u + 1} \sqrt{N} \log(n) \sqrt{\frac{\log(L_n)}{\bar{\alpha}_2}} + \frac{L_n^u N}{n^{r}} \right),
\end{align*}
where the constant \(r > 0\) can be chosen arbitrarily large.}
\end{prop}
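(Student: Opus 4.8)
The plan is to retrace the two-step decomposition used in the proof of Proposition~\ref{prop: approx contrast} and to notice that, under the polynomial assumption on $b$, the first step contributes nothing, so that only the (privacy) bound coming from the second step survives. Concretely, write as there
\[
S^{N,\text{pub}}_n - S^{N,0}_n = \bigl( H_\Xi S^{N,0}_n - S^{N,0}_n \bigr) + \bigl( S^{N,\text{pub}}_n - H_\Xi S^{N,0}_n \bigr),
\]
so that $\|E^{(u)}_{n,N}\|_p$ is bounded by the sum of the $L^p$-norms of the suprema over $\theta\in[\theta_0,\theta_{L_n-1}]$ of the $u$-th $\theta$-derivatives of these two terms.

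\textbf{First term vanishes.} Since $b(\theta,x)=b_1(\theta)b_2(x)$ with $b_1$ a polynomial of degree at most $a$, the map $\theta\mapsto f(\theta;x,y)$ in \eqref{E: def f apres} is a polynomial in $\theta$ of degree at most $2a$ (the highest-degree contribution coming from the $b_1^2$ term in $\Delta_n b^2$), hence $\theta\mapsto S^{N,0}_n(\theta)$ is a polynomial of degree at most $2a$. Polynomials of degree at most $2a+1$ belong to the spline space generated by the $B^k_i$ on $[\theta_0,\theta_{L_n-1}]$, and on that space the interpolation operator $H_\Xi$ acts as the identity: this is exactly the observation, recorded after Proposition~\ref{Prop_Hermite_interpolation_def}, that on a function already lying in the spline space the representation \eqref{Eq_approx_Ha}--\eqref{Eq_approx_cik} is the usual spline decomposition. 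Therefore $H_\Xi S^{N,0}_n(\theta)=S^{N,0}_n(\theta)$ for every $\theta\in[\theta_0,\theta_{L_n-1}]$, and the first term is identically zero. This is precisely the gain that removes the $N(1/L_n)^{a-u-1}$ contribution appearing in Proposition~\ref{prop: approx contrast}.

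\textbf{Second term.} This is handled by transcribing Step~2 of the proof of Proposition~\ref{prop: approx contrast} verbatim. That step uses only the linearity of $H_\Xi$, the $L^\infty$ bounds of Lemma~\ref{Lem_infinite_bound_deriv_interpolation} applied to the spline coefficients of $H_\Xi(\bm{\beta})$ and $H_\Xi(\bm{\mathcal{E}})$, the moment estimates of Lemma~\ref{l: moment} for the truncation remainder $H_\Xi(\bm{\beta})$ on $(\Omega^n)^c$, the Laplace deviation estimate of Lemma~\ref{l: Petrov}, and the bound $\mathbb{P}((\Omega^n)^c)\le c\,n^{-r_0}$ from Lemma~\ref{l: bound proba} — none of which involves the structure of $b$. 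Reproducing that argument yields
\[
\Bigl\|\sup_{\theta\in[\theta_0,\theta_{L_n-1}]}\bigl|\partial^u_\theta\bigl(S^{N,\text{pub}}_n(\theta)-H_\Xi S^{N,0}_n(\theta)\bigr)\bigr|\Bigr\|_p \le c\,L_n^{u+1}\sqrt{N}\log(n)\sqrt{\tfrac{\log(L_n)}{\bar{\alpha}_2}} + c\,\tfrac{L_n^u N}{n^{r}},
\]
with $r>0$ arbitrarily large, and combining with the first step gives the stated bound.

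\textbf{Main obstacle.} Everything outside the first step is a transcription of the corresponding parts of the proof of Proposition~\ref{prop: approx contrast}, so the only genuinely new point is the polynomial-reproduction claim: one must make sure Hermite interpolation of order $a$ with degree-$(2a+1)$ splines reproduces \emph{all} polynomials of degree at most $2a+1$, not merely those of degree at most $a$ as in the remark after Proposition~\ref{Prop_Hermite_interpolation}. I expect this identification — equivalently, checking that $H_\Xi$ restricts to the identity on the full spline space on $[\theta_0,\theta_{L_n-1}]$, and that $S^{N,0}_n$ indeed lands in that space — to be the step deserving careful justification, while the rest is routine.
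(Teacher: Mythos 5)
Your proposal is correct and takes essentially the same route as the paper: the paper's proof simply observes that, for polynomial-in-$\theta$ drift, the Step~1 (spline-approximation) error of Proposition~\ref{prop: approx contrast} vanishes, so that Step~2 alone yields the stated bound. In fact, your treatment of the key point is more careful than the paper's one-line justification, which appeals to the right-hand side of Proposition~\ref{Prop_Hermite_interpolation} being zero (i.e.\ to $\partial_\theta^{a+1}f\equiv 0$) even though $f$ has $\theta$-degree up to $2a$ when $b_1$ has degree $a$; your argument via exact reproduction of polynomials of degree at most $2a+1$ by the degree-$(2a+1)$ Hermite spline interpolant (the identity of $H_\Xi$ on the spline space) is exactly what is needed to make that step rigorous.
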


\begin{proof}
\textcolor{black}{The proof of Proposition \ref{prop: approx drift pol} closely follows that of Proposition \ref{prop: approx contrast}. The key difference is that, since Hermite approximations preserve polynomial functions, the right-hand side in Proposition \ref{Prop_Hermite_interpolation} becomes exactly zero. As a result, in the proof of Proposition \ref{prop: approx contrast}, the first step can be skipped entirely. Step 2 then yields the desired result.}
\end{proof}

\subsubsection{Proof of Theorem \ref{th: pol drift}}
\begin{proof}
\textcolor{black}{To prove (i), it suffices to replace Proposition \ref{prop: approx contrast} with Proposition \ref{prop: approx drift pol} in the proof of Theorem \ref{th: consistency}, specifically in Equation \eqref{eq: pol drift}. This substitution directly yields the desired result.}\\
\noindent
\textcolor{black}{Now, let us proceed to the proof (ii), which addresses the asymptotic normality of the estimator in the scenario where the privacy constraints are negligible. This result relies on Proposition \ref{prop: as norm without privacy}, which uses Proposition \ref{prop: approx contrast}, particularly in Equation \eqref{eq: pol as norm negl}. By substituting it with Proposition \ref{prop: approx drift pol}, we obtain the following expression:
$$\left \| \frac{1}{\sqrt{N}}(\partial_\theta {S^{N,\text{pub}}_n}(\theta^{\star}) - \partial_\theta S_n^{N,0}(\theta^{\star})) \right \|_p \le c L_n^2 \log(n) \sqrt{\frac{\log(L_n)}{\bar{\alpha}_2}} + c\frac{L_n \sqrt{N}}{n^r},$$
which explains why the hypothesis A\ref{Ass_splines}.\ref{H: Ass_splines priv neg} is no longer necessary in our analysis. This concludes the proof of the second point of the theorem.}
\\
\noindent
\textcolor{black}{Next, we need to demonstrate (iii), i.e. the asymptotic normality in cases where the privacy constraints are no longer negligible. This is based in Proposition \ref{prop: as norm privacy}, which uses the spline approximation to establish the convergence in \(L^1\) of \(I_1(\theta^\star)\) as shown in Equation \eqref{E: cv I1 star CLT priv}. For this, hypothesis A\ref{Ass_splines}.\ref{H: Ass_splines priv dominate} is required. However, because the drift is polynomial in \(\theta\) and the error from the spline approximation is zero, the term \(I_1(\theta^\star)\) also becomes zero. This lets us remove the requirement of A\ref{Ass_splines}.\ref{H: Ass_splines priv dominate}. Therefore, the conclusion of the proof of Theorem \ref{th: pol drift} follows easily from the previous arguments.
}
\end{proof}

\section{Proof of preliminary results}{\label{s: proof preliminary}}

\subsection{Proof of Lemma \ref{L: law random variance}}
\begin{proof}
 From the definition of $\ell_n^\star$, we have $\theta_{\ell_n^\star}=\frac{\ell_n^\star +S}{L_n} \le \theta^\star < \frac{\ell_n^\star+1 +S}{L_n}$. We deduce that $\textcolor{black}{\ell_n^\star}=\lfloor L_n \theta^\star -S\rfloor$ and
 $\theta_{\ell_n^\star}=\frac{\lfloor L_n \theta^\star -S\rfloor +S}{L_n}$.  Consequently,
 $v_n(\theta^\star)=\overline{v}(L_n(\theta^\star-\theta_{\ell_n\star}))=
 \overline{v}(L_n\theta^\star-\lfloor L_n \theta^\star -S\rfloor -S)$. Let us denote by $\vartheta^\star_n=L_n\theta^\star-\lfloor L_n\theta^\star \rfloor \in [0,1)$ the fractional part of $L_n\theta^\star$. We have $L_n\theta^\star-\lfloor L_n \theta^\star -S\rfloor = \vartheta^\star_n - \lfloor \vartheta^\star_n -S \rfloor$ and we deduce that $v_n(\theta^\star)=\overline{v}(\vartheta^\star_n -S - \lfloor \vartheta^\star_n -S \rfloor)$. 
 Now, if we check that 
 \begin{equation} \label{eq: egalite loi tronque unif}
 	\vartheta^\star_n -S - \lfloor \vartheta^\star_n -S \rfloor \overset{\mathcal{L}}{=} S, 
 \end{equation}
the lemma will be proved. Let us recall \textcolor{black}{that the random variable $S$ is uniformly distributed on $(0,1)$}, and therefore we need to show that the L.H.S. of \eqref{eq: egalite loi tronque unif} shares the same law.  For $g$ a non negative measurable real function, we write $\E[g(	\vartheta^\star_n -S - \lfloor \vartheta^\star_n -S\rfloor )]=\E[g(	\vartheta^\star_n -S) \mathbf{1}_{S \le \vartheta_n^\star}] + \E[g(	\vartheta^\star_n -S +1) \mathbf{1}_{S > \vartheta_n^\star}]$
where we used that $\vartheta^\star_n \in[0,1)$. Since $S$ is uniform, this gives $\E[g(	\vartheta^\star_n -S - \lfloor \vartheta^\star_n -S\rfloor )]=\int_0^{\vartheta_n^\star} g(\vartheta_n^\star - s) ds + 
\int_{\vartheta_n^\star}^1 g(\vartheta_n^\star - s +1) ds = \int_0^{\vartheta_n^\star} g( s) ds + 
\int_{\vartheta_n^\star}^1 g(s ) ds=\int_0^1 g(s)ds$, by change of variables.  We deduce \eqref{eq: egalite loi tronque unif} and the lemma follows.
 \end{proof}

\subsection{Proof of Proposition \ref{Prop_Hermite_interpolation}}\label{subsec_proof_splines}
\begin{proof}
{\modar Let $x_0\in[\xi_0,\xi_{\modch \Lambda}]$ and let $P_{x_0}$ be the Taylor approximation of $f$ at the point $x_0$ with order $a$, defined by
	$P_{x_0}(x)=\sum_{k=0}^a f^{(k)}(x_0)(x-x_0)^k/k!$.}	
{\modar 
	We know that the spline approximation of the polynomial function $P_{x_0}$ of degree at most $a$ is exact, yielding to $HP_{x_0}=P_{x_0}$. 
Hence, using that for $k\le a$ the $k$-th derivative of $f$ and $P_{x_0}$ are the same at the point $x_0$, we can write for $k \in \{0,\dots,a\}$,
\begin{align} \label{eq: depuis egalite f Px en x0}
\frac{\partial^k }{\partial x^k}Hf(x_0)	- \frac{\partial^k }{\partial x^k}f(x_0)&=
\frac{\partial^k }{\partial x^k}Hf(x_0)	- \frac{\partial^k }{\partial x^k}P_{x_0}(x_0)\\
&\nonumber =
\frac{\partial^k }{\partial x^k}Hf(x_0)	- \frac{\partial^k }{\partial x^k}HP_{x_0}(x_0),
\end{align}
where in the second line we used $HP_{x_0}=P_{x_0}$.
}
%
%
{\modar We write}
\begin{align} \nonumber
\bigg|\frac{\partial^k}{\partial x^k}\big({\modar H 
}f-HP_{x_0}\big)(x_0)\bigg|
&=\bigg|\frac{\partial^k}{\partial x^k}
\bigg[
{\modar \sum_{i=-1}^{{\modch \Lambda}-1} \sum_{r=0}^a [c_i^r(f)-c_i^r(P_{x_0})]B_i^r(x)}\bigg]\bigg|_{x=x_0}\bigg|\\
&  \label{Eq_bound_partial_Hf_2}
\leq c({\modar a}+1){\modch \Lambda}^k\sup_{i\in\{i_{0}-1,i_0\},r\in\{0,\dots,a\}}|c_i^{r}(f)-c_i^{r}(P_{x_0})|{\modar ,}
\end{align}
{\modar where $i_0 \in \{0,\dots,{\modch \Lambda}-1\}$ is such that $x_{0} \in [\xi_{i_0},\xi_{i_0+1})$ and we used 
$\norm{\frac{\partial^k}{\partial x^k} B^r_i}_\infty \le c \Lambda^k$
.}
%
%
%
{\modar From \eqref{Eq_approx_cik}, }
\begin{align}
{\modar \big|c_i^r(f)-c_i^r(P_{x_0})\big| }
&\nonumber =\bigg|
{\modar \sum_{m=0}^a \frac{(-1)^m}{m!}(g_i^r)^{(2a+1-m)}(\xi_{i+1})\big[f^{(m)}(\xi_{i+1})-P_{x_0}^{(m)}(\xi_{i+1})\big]}\bigg|\\
& \label{eq: borne pour coeff c f cPx}
\leq {\modar c \sum_{m=0}^{a}\frac{1}{{\modch \Lambda}^{m}}\big|f^{(m)}(\xi_{i})-P_{x_0}^{(m)}(\xi_{i})\big|},
\end{align}
since $(\modar{g_i^r})^{(2a+1-m)}$ is the {\modar $(2a+1-m)$-derivative of a $(2a+1)$-degree} polynomial function, and so it is an $m$-degree polynomial function computed by differentiation of \eqref{Eq_function_gk_def}.\\
{\modar Collecting \eqref{eq: depuis egalite f Px en x0}, \eqref{Eq_bound_partial_Hf_2} and \eqref{eq: borne pour coeff c f cPx} we deduce
	\begin{equation*}
		\left|\frac{\partial^k }{\partial x^k}Hf(x_0)	- \frac{\partial^k }{\partial x^k}f(x_0)\right|		\le c \sup_{i\in\{i_0-1,i_0\}} \sum_{m=0}^a {\modch \Lambda}^{k-m}\big|f^{(m)}(\xi_{i})-P_{x_0}^{(m)}(\xi_{i})\big|  .
	\end{equation*}
Then, using a Taylor expansion of order $a-m$ around $x_0$ for $f^{(m)}(\xi_i)-P_{x_0}^{(m)}(\xi_i)$, and recalling that $f^{(k)}(x_{0})-P_{x_0}^{(k)}(x_{0})=0$ for $k\in\{0,\dots,a\}$, we deduce that 
$\left|f^{(m)}(\xi_i)-P_{x_0}^{(m)}(\xi_i)\right| \le |\xi_i-x_0|^{(a+1-m)} \norm{f^{(a+1)}-P_{x_0}^{(a+1)}}_\infty =  |\xi_i-x_0|^{(a+1-m)} \norm{f^{(a+1)}}_\infty \le c {\modch \Lambda}^{m-a-1}\norm{f^{(a+1)}}_\infty$. It yields
	\begin{equation*}
	\left|\frac{\partial^k }{\partial x^k}Hf(x_0)	- \frac{\partial^k }{\partial x^k}f(x_0)\right|		\le c 
	 \sum_{m=0}^a L_n^{k-m} {\modch \Lambda}^{m-a-1}\norm{f^{(a+1)}}_\infty \le c {\modch \Lambda}^{k-a-1}\norm{f^{(a+1)}}_\infty .
\end{equation*}
As $x_0\in [\xi_0,\xi_{\modch \Lambda}]$ is arbitrary, we get the result.
}
   
\end{proof}

\subsection{Proof of Lemma \ref{l: spline g}}\label{Subsec_spline g}
\begin{proof}
{\modar We start with the introduction of some useful notation for the proof. First, we define
	$\bs{\overline t}=\{\overline t_0,\cdots,\overline t_{3a+2}\}$ a sequence of knots where $\overline{t}_l=0$ for $0\le l \le a$, $\overline{t}_l=1$ for $a+1\le l \le 2a+1$ and  $\overline{t}_l=2$ for $2a+2 \le l \le 3a+2$. Recalling Definition \ref{Def_splines}, we have that, for $0\le k \le a$, $\overline{B}_{k}=B_{h,2a+1,\overline{t}}$ is the B-spline of degree $2a+1$ relying on the $2a+3$ knots $\{\overline{t}_k,\dots,\overline{t}_{k+2a+2}\}=
	\underbrace{\overline t_k,\dots,\overline t_a}_{=0},\underbrace{\overline t_{a+1},\dots,\overline t_{2a+1}}_{=1},\underbrace{ \overline t_{2a+2},\dots, \overline t_{2a+2+k}}_{=2}  $. 
	This leads us to denote by $	\overline B_{(\beta_1,\beta_2,\beta_3)}$
	the B-spline of order $\beta_1+\beta_2+\beta_3-2$ where  $(\beta_1,\beta_2,\beta_3)$ stands for the number of repetitions of each knots : $0$ is repeated $\beta_1\modhel{=a+1-k}$ times, $1$ is repeated $\beta_2\modhel{=a+1}$ times, and $2$ {\modch is} repeated $\beta_3\modhel{=k+1}$ times. Using this notation, we have $\overline B_k:=\overline B_{k,2a+1,\bs{\overline t}}=\overline B_{(a+1-k,a+1,k+1)}$, for 
	$0\le k \le a$.}
{\modar For example,} $\overline B_0$ is a B-spline based on points $\{\overline t_0,\cdots,\overline t_{2a+{\modar 2}}\}=\underbrace{\overline t_0,\cdots,\overline t_a}_{=0},\underbrace{\overline t_{a+1},\cdots,\overline t_{2a+{\modar 1}}}_{=1},\underbrace{\overline t_{2a+{\modar2}}}_{=2}$, that we sum up through the notation
\begin{equation*}
\overline B_{0}=\overline B_{(a+1,a+1,1)}.
\end{equation*}
%
%
Recall that {\modar by Theorem 9 in \cite{Lyche_2017},} $\overline B_{j,p,\bs{\overline t}}$ satisfies
\begin{equation*}
\overline B_{j,p,\bs{\overline t}}'=p\Bigg[\frac{\overline B_{j,p-1,\bs{\overline t}}}{(\overline t_{j+p}-\overline t_{j})}-\frac{\overline B_{j+1,{\modar p-1},\bs{\overline t}}}{(\overline t_{j+p+1}-\overline t_{j+1})}\Bigg].
\end{equation*}
Noting that $\overline B_k=\overline B_{k,2a+1,\bs{\overline t}}=\overline B_{(a+1-k,a+1,k+1)}$, we have
{\modar  for $0<k< a$,}
\begin{equation*}
\overline B_{k}'
=\frac{2a+1}{2}\Big[\overline B_{k,2a,\bs{\overline t}}-\overline B_{k+1,2a,\bs{\overline t}}\Big]=\frac{2a+1}{2}\Big[\overline B_{(a+1-k,a+1,k)}-\overline B_{(a-k,a+1,k+1)}\Big],
\end{equation*}
as well as
\begin{equation*}
\overline B_{0}'
{\modar =(2a+1) \overline B_{(a+1,a+1,0)}-\frac{2a+1}{2}\overline B_{(a,a+1,1)}} \;\;\text{and}\;\;\overline B_{a}'
={\modar \frac{(2a+1)}{2} \overline B_{(1,a+1,a)}- (2a+1)\overline B_{(0,a+1,a+1)}.}
\end{equation*}
Then,
\begin{align*}
\sum_{k=0}^{a}\overline B_{k}'
&=\sum_{k=1}^{a-1}\frac{2a+1}{2}{\modchi \Big[\overline B_{(a+1-k,a+1,k)}-\overline B_{(a-k,a+1,k+1)}\Big]}+(2a+1)\Big[\overline B_{(a+1,a+1,0)}-{\modar \frac{1}{2}}\overline B_{(a,a+1,1)}\Big]\\
&+(2a+1)\Big[{\modar \frac{1}{2}}\overline B_{(1,a+1,a)}-\overline B_{(0,a+1,a+1)}\Big]\\
&=\frac{2a+1}{2}\Big[\overline B_{{\modch (}a,a+1,1 {\modch )}}-\overline B_{ {\modch (} 1,a+1,a {\modch )}}\Big]+(2a+1)\Big[\overline B_{(a+1,a+1,0)}-{\modar \frac{1}{2}}\overline B_{(a,a+1,1)}+{\modar \frac{1}{2}}\overline B_{(1,a+1,a)}-\overline B_{(0,a+1,a+1)}\Big]
\\
&{\modar =(2a+1)[\overline B_{(a+1,a+1,0)}-\overline B_{(0,a+1,a+1)} ]}.
%
\end{align*}
We can show (\textcolor{black}{see for instance \cite{deBoor_Book}, Section 3}) that
\begin{equation*}
\overline B_{(a+1,a+1,0)}(x)=\binom{2a}{a}x^{a}(1-x)^a {\modar \mathbf{1}_{[0,1]}(x)}
\;\;\text{and}\;\; \overline B_{(0,a+1,a+1)}(x)=\binom{2a}{a}(x-1)^{a}(2-x)^a
{\modar \mathbf{1}_{[1,2]}(x).}
\end{equation*}
%
{\modar We deduce that, for $x \in \mathbb{R}$,}
\begin{align*}
\sum_{k=0}^{a}\overline B_{k}'(x)
=(2a+1) \binom{2a}{a} \Big[x^{a}(1-x)^a {\modar \mathbf{1}_{[0,1]}(x)}-(x-1)^{a}(2-x)^a
{\modar \mathbf{1}_{[1,2]}(x)}\Big].
\end{align*}    
Hence, the result. 
\end{proof}

\subsection{Proof of Lemma \ref{l: asymptotic 2.5}}
\begin{proof}
We start by proving that 
$$\frac{\Delta_n}{N} \sum_{i = 1}^N \sum_{j = 1}^n f (X^i_{t_{j - 1}}) - \frac{1}{N} \sum_{i = 1}^N \int_0^T f(X_s^i) ds \xrightarrow{L^1} 0.$$
Indeed, one can write $\int_0^T f(X_s^i) ds$ as $\sum_{j = 1}^n \int_{t_{j - 1}}^{t_j} f(X_s^i) ds$. Recall moreover that $\Delta_n = t_j - t_{j - 1} = \int_{t_{j - 1}}^{t_j} ds.$ Hence, the norm $1$ of the difference above is bounded by 
\begin{align*}
&\frac{1}{N} \sum_{i = 1}^N \sum_{j = 1}^n \int_{t_{j - 1}}^{t_j} \E[|f (X^i_{t_{j - 1}}) - f (X^i_{s})|] ds \\
& \le \frac{c}{N} \sum_{i = 1}^N \sum_{j = 1}^n \int_{t_{j - 1}}^{t_j} \E\big[\textcolor{black}{|X^i_{t_{j - 1}} -X^i_{s}|(1 + |X^i_{t_{j - 1}}| + |X^i_s|)^k}\big] ds \\
 & \le \frac{c}{N} \sum_{i = 1}^N \sum_{j = 1}^n \int_{t_{j - 1}}^{t_j} |t_{ j-1} - s|^\frac{1}{2} ds \le c \Delta_n^\frac{1}{2},
\end{align*}
having used Cauchy-Schwarz inequality and Points 1 and 2 of Lemma \ref{l: moment}. Then, it clearly goes to $0$ as we wanted. To conclude the proof remark that the law of large number provides 
$$\frac{1}{N} \sum_{i = 1}^N \int_0^T f(X_s^i) ds \xrightarrow{\mathbb{P}} \int_0^T \E[f(X_s)] ds,$$
as $\{(X^i_s)_{s\in[0,T]}, i \in \id{1}{N}\}$ are i.i.d. processes.
\end{proof}

\subsection{Proof of Lemma \ref{l: bound proba}}
\begin{proof}
Recall that, for all $i\in\{1,\dots,N\}$, $j \in \{1, \dots , n \}$ and $k \in \N$ it is 
$$f^{(k)}(\theta; X_{t_{j - 1}}^i, X_{t_j}^i) = 2 \frac{\partial_\theta^k b(\theta,X_{t_{j-1}}^{i})(X_{t_j}^{i}-X_{t_{j-1}}^{i})}{{\modch \sigma}^{2}(X_{t_{j-1}}^{i})} + \Delta_n \frac{\partial_\theta^k (b^2(\theta,X_{t_{j-1}}^{i}))}{{\modch \sigma}^{2}(X_{t_{j-1}}^{i})}.$$
Then, we have
\begin{align*}
\mathbb{P}_{t_{j-1}}(|f^{(k)}(\theta;X_{t_{j-1}}^{i},X_{t_j}^{i})| > \tau_n)
&\le \mathbb{P}_{t_{j-1}}\Big(\frac{2 \partial_\theta^k b(\theta,X_{t_{j-1}}^{i})(X_{t_j}^{i}-X_{t_{j-1}}^{i})}{{\modch \sigma}^{2}(X_{t_{j-1}}^{i})}>\frac{\sqrt{\Delta_n}\log(n)}{2}\Big)\\
&+\mathbb{P}_{t_{j-1}}\Big(\frac{\Delta_n \partial_\theta^k( b^2(\theta,X_{t_{j-1}}^{i}))}{{\modch \sigma}^{2}(X_{t_{j-1}}^{i})}>\frac{\sqrt{\Delta_n}\log(n)}{2}\Big)\\
&=:P_1+P_2.
\end{align*}
Since ${\modch \sigma}$ and the derivatives of $b$ are lower and upper bounded by Assumptions \ref{Ass_2} and \ref{Ass_3} respectively, and $X_{t_{j-1}}^{i}$ is $\cF_{t_{j-1}}^N$-measurable, the second term can be controlled as 
\begin{equation*}
P_2\le \frac{\textcolor{black}{c}\Delta_n^{r/2}}{(\log(n))^r}\E_{t_{j-1}}\Bigg[\textcolor{black}{\bigg|}\frac{\partial_\theta^k (b^2(\theta,X_{t_{j-1}}^{i}))}{{\modch \sigma}^2(X_{t_{j-1}}^{i})}\textcolor{black}{\bigg|^{r}}\Bigg]\le c \left(\frac{\sqrt{\Delta_n}}{\log(n)}\right)^r,
\end{equation*}
for all $r\ge 1$. In the first term, replacing the increments of $X$ by its dynamics, we get
\begin{align*}
P_1
&\le \mathbb{P}_{t_{j-1}}\Big(\frac{2\big| \partial_\theta^k b(\theta,X_{t_{j-1}}^{i})\big|}{{\modch \sigma}^{2}(X_{t_{j-1}}^{i})}\Big|\int_{t_{j-1}}^{t_j}b(\theta,X_{t_{j-1}}^{i})ds\Big|>\frac{\sqrt{\Delta_n}\log(n)}{6}\Big)\\
&+\mathbb{P}_{t_{j-1}}\bigg(\frac{2\big| \partial_\theta^k b(\theta,X_{t_{j-1}}^{i})\big|}{{\modch \sigma}^{2}(X_{t_{j-1}}^{i})}\Big|\int_{t_{j-1}}^{t_j}({\modch \sigma}(X_{s}^{i})-{\modch \sigma}(X_{t_{j-1}}^{i}))dW_s^i\Big|>\frac{\sqrt{\Delta_n}\log(n)}{6}\bigg)\\
&+\mathbb{P}_{t_{j-1}}\bigg(\frac{2\big| \partial_\theta^k b(\theta,X_{t_{j-1}}^{i})\big|}{{\modch \sigma}(X_{t_{j-1}}^{i})}\Big|W_{t_j}^{i}-W_{t_{j-1}}^{i}\Big|>\frac{\sqrt{\Delta_n}\log(n)}{6}\bigg)\\
&=:P_{11}+P_{12}+P_{13}.
\end{align*}
From Markov inequality, we get for any $r\ge 1$,
\begin{align*}
P_{11}\le \frac{\Delta_n^{r/2}}{(\log(n))^r}\E_{t_{j-1}}\Bigg[\bigg|\frac{2 \partial_\theta^k b(\theta,X_{t_{j-1}}^{i})}{{\modch \sigma}^{2}(X_{t_{j-1}}^{i})}\int_{t_{j-1}}^{t_j}b(\theta,X_{t_{j-1}}^{i})ds\bigg|^r\Bigg]\le c \left(\frac{\sqrt{\Delta_n}}{\log(n)}\right)^r,
\end{align*}
where we have used that $b$ and $\partial_\theta^k b$ are bounded and ${\modch \sigma}$ is lower bounded as given in Assumptions \ref{Ass_2} and \ref{Ass_3}.\\
%
%
%
\textcolor{black}{Successively applying Markov and Burkholder-Davis-Gundy inequalities, before using the Lipschitzness of ${\modch \sigma}$, we get {\modch for any $r \ge 2$} 
\begin{align*}
P_{12}
&\le \frac{\textcolor{black}{c}}{\big(\sqrt{\Delta_n}\log(n)\big)^r}\E_{t_{j-1}}\Bigg[\bigg|\frac{2 \partial_\theta^k b(\theta,X_{t_{j-1}}^{i})}{{\modch \sigma}^{2}(X_{t_{j-1}}^{i})}\bigg|^r\bigg|\int_{t_{j-1}}^{t_j}({\modch \sigma}(X_{s}^{i})-{\modch \sigma}(X_{t_{j-1}}^{i}))dW_s^{i}\bigg|^{r}\Bigg]\\
&\le \frac{c}{\big(\sqrt{\Delta_n}\log(n)\big)^r}\E_{t_{j-1}}\Bigg[\bigg|\int_{t_{j-1}}^{t_j}|{\modch \sigma}(X_{s}^{i})-{\modch \sigma}(X_{t_{j-1}}^{i})|^2ds\bigg|^{r/2}\Bigg]\\
&\le \frac{c}{\big(\sqrt{\Delta_n}\log(n)\big)^r}\E_{t_{j-1}}\Bigg[\bigg|\int_{t_{j-1}}^{t_j}|X_{s}^{i}-X_{t_{j-1}}^{i}|^2ds\bigg|^{r/2}\Bigg]\\
&\le \frac{c\Delta_n^{r/2-1}}{\big(\sqrt{\Delta_n}\log(n)\big)^r}\E_{t_{j-1}}\bigg[\int_{t_{j-1}}^{t_j}|X_{s}^{i}-X_{t_{j-1}}^{i}|^rds\bigg]\\
&\le \frac{c\Delta_n^{r/2-1}\Delta_n^{r/2+1}}{\big(\sqrt{\Delta_n}\log(n)\big)^r}R_{t_{j-1}}(1)=\frac{c\Delta_n^{r}}{\big(\sqrt{\Delta_n}\log(n)\big)^r}R_{t_{j-1}}(1),
\end{align*}}
where we have also used Jensen's inequality in the fourth line and Point 3 of Lemma \ref{l: moment} to get the last one.
Last,
\begin{align*}
P_{13}
&=\mathbb{P}_{t_{j-1}}\Bigg(\Big|W_{t_j}^{i}-W_{t_{j-1}}^{i}\Big|>\frac{\sqrt{\Delta_n}\log(n)}{12}\frac{|{\modch \sigma}(X_{t_{j-1}}^{i})|}{\sqrt{|\partial_\theta^k b^2(\theta,X_{t_{j-1}}^{i})|}}\Bigg)\\
&=\mathbb{P}\Bigg(|Y|>\frac{\log(n)}{12}\frac{|{\modch \sigma}(x)|}{\sqrt{|\partial_\theta^k b^2(\theta,x)|}}\Bigg)\bigg|_{x=X_{t_{j-1}}^{i}}=2\mathbb{P}\Bigg(Y>\frac{\log(n)}{12}\frac{|{\modch \sigma}(x)|}{\sqrt{|\partial_\theta^k b^2(\theta,x)|}}\Bigg)\bigg|_{x=X_{t_{j-1}}^{i}}
\end{align*}
where $Y$ is a standard Gaussian random variable. Recalling that for all $z>0$,
\begin{equation*}
e^{z^2/2}\P(Y>z)=\frac{1}{\sqrt{2\pi}}\int_z^{+\infty}e^{-(y^2-z^2)/2}dy\le \frac{1}{\sqrt{2\pi}}\int_0^{+\infty}e^{-(y-z)^2/2}dy=\frac{1}{2},
\end{equation*}
we get
\begin{equation*}
P_{13}\le \exp\Bigg(-\frac{1}{2}\bigg(\frac{\log(n)}{12}\frac{|{\modch \sigma}(X_{t_{j-1}}^{i})|}{\sqrt{|\partial_\theta^k b^2(\theta,{\modch X_{t_{j-1}^i}})|}}\bigg)^2\Bigg)
\end{equation*}
Combining all previous bounds, there exist constants $c_1$ and $c_2$ such that
\begin{equation*}
\mathbb{P}_{t_{j-1}}(|f(\theta;X_{t_{j-1}}^{i},X_{t_j}^{i})| > \tau_n)
\le  \frac{\Delta_n^{r/2}}{(\log(n))^r}R_{t_{j-1}}(1)+c_1\exp\bigg(-c_2(\log(n))^2\frac{{\modch \sigma}^{2}(X_{t_{j-1}}^{i})}{ \partial_\theta^k b^2(\theta,X_{t_{j-1}}^{i})}\bigg).
\end{equation*}
Hence, the result.
\end{proof}

\subsection{Proof of Lemma \ref{l: Petrov}}
\begin{proof}
Our lemma is based on Theorem 15, p.52 of \cite{Petrov}. In order to apply it we need a control on the exponential moments of the Laplace. \textcolor{black}{Recalling that the moment generating function of a Laplace random variable with mean $0$ and scale parameter $b$ is $t\mapsto 1/(1-b^2t^2)$ we have
$\E\big[e^{t\cL(1/\gamma_h)}\big] 
=1/\big(1-(t/\gamma_h)^2\big)$.}
Remark that, for $u \le 1/4$ it is $1/(1 - u) < e^{2u}$. Then, for $t \le \gamma_h/2$, we have $1/\big(1-(t/\gamma_h)^2\big) \le \exp\big(2(t/\gamma_h)^2\big)$. From here we conclude
$$\E\Big[e^{t\cL(1/\gamma_h)}\Big] \le e^{2t^2/\gamma_h^2} \, \mbox{ for } t\le \frac{1}{2} \gamma_{\max}.$$
According to Petrov's notation we therefore have $T:= \gamma_{\max}/2$, $g_h := 4 /{\modch \gamma_h}^2$, $G := 4 \sum_{h = 1}^U1/\gamma_h^2$. Then, from Theorem 15, p.52 of \cite{Petrov} it follows
\begin{displaymath}
\P\big(S_U \geq x \big) \leq \left\{
\begin{array}{rcl}
   e^{-\frac{x^2}{2 G}}  & \text{if} & 0 \le x \le \frac{\gamma_{\max} G}{2} \\
    e^{-\frac{\gamma_{\max} x}{4}} & \text{if} & x \ge \frac{\gamma_{\max} G}{2}
\end{array}\right.
\end{displaymath}
Hence, by symmetry, 
\begin{align}{\label{eq: Petrov A}}
\P\bigg(\frac{|S_U|}{\sqrt{U}} \geq \lambda \bigg) = \P\big({|S_U|} \geq \lambda \sqrt{U} \big)  \leq \left\{
\begin{array}{rcl}
   2 e^{-\frac{\lambda^2 U}{2 G}}  & \text{if} & 0 \le  \lambda \sqrt{U} \le \frac{\gamma_{\max} G}{2} \\
    \textcolor{black}{2}e^{-\frac{\gamma_{\max}  \lambda \sqrt{U}}{4}} & \text{if} &  \lambda \sqrt{U} \ge \frac{\gamma_{\max} G}{2}.
\end{array}\right.
\end{align}
Observe now that, from the definition of $G$ it is
$$G = 4 U \frac{1}{U} \sum_{h = 1}^U \frac{1}{\gamma_h^2} = 4U \frac{1}{\bar{\gamma}^2}.$$
Then, $ \lambda \sqrt{U} \le \gamma_{\max} G/2$ if and only if $ \lambda \sqrt{U} \le 2U\gamma_{\max}/\bar{\gamma}^2$, i.e. $ \lambda\le 2 \sqrt{U}\gamma_{\max}/\bar{\gamma}^2$. Replacing such observation in \eqref{eq: Petrov A} we obtain 
\begin{displaymath}
 \P\bigg(\frac{|S_U|}{\sqrt{U}} \geq \lambda \bigg) \leq \left\{
\begin{array}{rcl}
   2 e^{-\frac{\lambda^2 \bar{\gamma}^2 }{8}}  & \text{if} & 0 \le  \lambda  \le 2 \sqrt{U} \gamma_{\max}/\bar{\gamma}^2 \\
    \textcolor{black}{2}e^{-\frac{\gamma_{\max}  \lambda \sqrt{U}}{4}} & \text{if} &  \lambda \ge 2 \sqrt{U}  \gamma_{\max}/\bar{\gamma}^2,
\end{array}\right.   
\end{displaymath}
as we wanted.

\end{proof}

\subsection{Proof of equation \eqref{Eq : LDP overline bm Q}}\label{Ss: proof LDP overline bm Q}

{\revar
It is sufficient to prove the formula for $\bm{A}$ given as $\bm{A} = \prod_{j=1}^n A_j$ with $A_j\in \Xi_{\mathcal{Z}_j}$ for all $j\in\{1,\dots,n\}$. Let us denote by $q_j(d\tilde{x}_0,
\dots,d\tilde{x}_n \mid X_{t_j}^i=x_j,X_{t_{j-1}}^i=x_{j-1})$ the distribution of $\bm{X}^i=(X^i_l)_{l=0,\dots,n}$ conditional to $X_{t_j}^i=x_j, X_{t_{j-1}}^i=x_{j-1}$. Define, as a reminder, $\bm{\overline{Q}}(A_1\times\dots\times A_n \mid X^i_{t_j}=x_j, X^i_{t_{j-1}}=x_{j-1}):=\mathbb{P}(Z^i_1\in A_1,\dots,Z^i_n\in A_n  \mid X^i_{t_j}=x_j, X^i_{t_{j-1}}=x_{j-1})$ for $(A_1,\dots,A_n) \in \prod_{l=1}^n \Xi_{\mathcal{Z}_l}$, which gives the law of the whole vector of public data containing information about  $(X^i_{t_j}, X^i_{t_{j-1}})$.
With this notation
\begin{multline*}
\overline{\bm{Q}} (\bm{A}\mid X^i_{t_j}=x_j, X^i_{t_{j-1}}=x_{j-1}) \\ = \int_{\mathbb{R}^{n+1}} \prod_{l=1}^n Q_l(A_l \mid  X_{t_l}^i=\tilde{x}_l,X_{t_{l-1}}^i=\tilde{x}_{l-1} )  q_j(d\tilde{x}_0,
\dots,d\tilde{x}_n \mid X_{t_j}^i=x_j,X_{t_{j-1}}^i=x_{j-1}).
\end{multline*}
From \eqref{eq: def local privacy}, $ Q_l^\star(A_l):=\inf_{(\tilde{x}_l',\tilde{x}_{l-1}')} Q_l(A_l \mid  X_{t_l}^i=\tilde{x}_l',X_{t_{l-1}}^i=\tilde{x}_{l-1}' ) \ge e^{-\alpha_l} Q_l(A_l \mid  X_{t_l}^i=\tilde{x}_l,X_{t_{l-1}}^i=\tilde{x}_{l-1} )$ for any $\tilde{x}_l,\tilde{x}_{l-1}$.
Hence,
\begin{multline}\label{Eq : bm Q x}
\overline{\bm{Q}}(\bm{A}\mid X^i_{t_j}=x_j, X^i_{t_{j-1}}=x_{j-1}) \le \exp\Big(\sum_{l=1}^n \alpha_l\Big) \times \\ \int_{\mathbb{R}^{n+1}} \prod_{l=1}^n Q_l^\star(A_l)  q_j(d\tilde{x}_0,
\dots,d\tilde{x}_n \mid X_{t_j}^i=x_j,X_{t_{j-1}}^i=x_{j-1})  \\=  \exp\Big(\sum_{l=1}^n \alpha_l\Big)  \times \prod_{l=1}^n Q_l^\star(A_l).
\end{multline}
Now, we use that for any $x_j',x_{j-1}'$, we have 
$\int_{\mathbb{R}^{n+1}}q(d\tilde{x}_0,\dots,d\tilde{x}_n \mid 
X_{t_j}^i=x'_j,X_{t_{j-1}}^i=x'_{j-1})=1$, and deduce
\begin{multline}\label{Eq : bm Q x prime}
\prod_{l=1}^n Q_l^\star(A_l)= \int_{\mathbb{R}^{n+1}} \prod_{l=1}^n Q_l^\star(A_l)  q_j(d\tilde{x}_0,
\dots,d\tilde{x}_n \mid X_{t_j}^i=x'_j,X_{t_{j-1}}^i=x'_{j-1})
\\ \le  \int_{\mathbb{R}^{n+1}} \prod_{l=1}^n Q_l(A_l \mid  X_{t_l}^i=\tilde{x}_l,X_{t_{l-1}}^i=\tilde{x}_{l-1} )  q_j(d\tilde{x}_0,
\dots,d\tilde{x}_n \mid X_{t_j}^i=x'_j,X_{t_{j-1}}^i=x'_{j-1})
\\
= \overline{\bm{Q}} (\bm{A}\mid X^i_{t_j}=x'_j, X^i_{t_{j-1}}=x'_{j-1}) 
\end{multline}
where we used the definition of $Q_l^\star(A_l)$ in the second line. 
Joining \eqref{Eq : bm Q x} and \eqref{Eq : bm Q x prime} gives the equation \eqref{Eq : LDP overline bm Q}. \qed
}

{ \rev
\section*{Acknowledgements}
We are grateful to two anonymous referees for truly helpful comments and suggestions.
}
\color{black}

\appendix

\color{black}



\begin{thebibliography}{9}

\bibitem{1Fisher} Abadi, M., Chu, A., Goodfellow, I., McMahan, H. B., Mironov, I., Talwar, K., \& Zhang, L. (2016, October). Deep learning with differential privacy. In Proceedings of the 2016 ACM SIGSAC conference on computer and communications security (pp. 308-318).

\bibitem{CLDP} Amorino, C., \& Gloter, A. (2023). Minimax rate for multivariate data under componentwise local differential privacy constraints. arXiv preprint arXiv:2305.10416.

\bibitem{Contrast} Amorino, C., \& Gloter, A. (2020). Contrast function estimation for the drift parameter of ergodic jump diffusion process. Scandinavian Journal of Statistics, 47(2), 279-346.

\bibitem{McKean} Amorino, C., Heidari, A., Pilipauskaitė, V., \& Podolskij, M. (2023). Parameter estimation of discretely observed interacting particle systems. Stochastic Processes and their Applications.

\bibitem{5Fisher} Apple Differential Privacy Team. (2017). Learning with privacy at scale. Apple Mach. Learn. J, 1(8), 1-25. Available at https:
//machinelearning.apple.com/2017/12/06/learning-with-privacy-at-scale.html.

\bibitem{Avella} Avella-Medina, M., Bradshaw, C., \& Loh, P. L. (2021). Differentially private inference via noisy optimization. arXiv preprint arXiv:2103.11003.


\bibitem{11Est} Bergstrom, A.R. (1990) Continuous Time Econometric Modeling, Oxford University Press,
Oxford.

\bibitem{4CLDP} Berrett, T., \& Butucea, C. (2020). Locally private non-asymptotic testing of discrete distributions is faster using interactive mechanisms. Advances in Neural Information Processing Systems, 33, 3164-3173.

\bibitem{6CLDP} Berrett, T., \& Yu, Y. (2021). Locally private online change point detection. Advances in Neural Information Processing Systems, 34, 3425-3437.

\bibitem{Billingsley_Book} Billingsley, P. (2013). Convergence of probability measures. John Wiley \& Sons.

\bibitem{10CLDP} Butucea, C., Dubois, A., Kroll, M., \& Saumard, A. (2020). Local differential privacy: Elbow effect in optimal density estimation and adaptation over Besov ellipsoids.

\bibitem{11CLDP} Butucea, C., Rohde, A., \& Steinberger, L. (2023). Interactive versus noninteractive locally differentially private estimation: Two elbows for the quadratic functional. The Annals of Statistics, 51(2), 464-486.

\bibitem{22Wang} D'Argenio, D. Z., \& Park, K. (1997). Uncertain pharmacokinetic/pharmacodynamic systems: design, estimation and control. Control Engineering Practice, 5(12), 1707-1716.

\bibitem{deBoor_Book} De Boor, C. (1986). B (asic)-spline basics. University of Wisconsin-Madison. Mathematics Research Center.


\bibitem{DenDioMar} Denis, C., Dion, C., \& Martinez, M. (2020). Consistent procedures for multiclass classification
of discrete diffusion paths. Scandinavian Journal of Statistics.

\bibitem{DelGenLar} Delattre, M., Genon-Catalot, V., \& Larédo, C. (2018). Parametric inference for discrete
observations of diffusion processes with mixed effects. Stochastic Processes and their Applications,
128(6), 1929-1957.

\bibitem{LAN} Della Maestra, L., \& Hoffmann, M. (2023). The LAN property for McKean–Vlasov models in a mean-field regime. Stochastic Processes and their Applications, 155, 109-146.

\bibitem{24Avella} Ding, B., Kulkarni, J., \& Yekhanin, S. (2017). Collecting telemetry data privately. Advances in Neural Information Processing Systems, 30.

\bibitem{21Wang} Donnet, S., \& Samson, A. (2013). A review on estimation of stochastic differential equations for pharmacokinetic/pharmacodynamic models. Advanced drug delivery reviews, 65(7), 929-939.


\bibitem{Martin} Duchi, J. C., Jordan, M. I., \& Wainwright, M. J. (2018). Minimax optimal procedures for locally private estimation. Journal of the American Statistical Association, 113(521), 182-201.

\bibitem{Dwo06} Dwork, C., McSherry, F., Nissim, K., \& Smith, A. (2006). Calibrating noise to sensitivity in private data analysis. In Theory of Cryptography: Third Theory of Cryptography Conference, TCC 2006, New York, NY, USA, March 4-7, 2006. Proceedings 3 (pp. 265-284). Springer Berlin Heidelberg

\bibitem{30Fisher} Dwork, C., \& Smith, A. (2010). Differential privacy for statistics: What we know and what we want to learn. Journal of Privacy and Confidentiality, 1(2).


\bibitem{29Avella} Erlingsson, \'U., Pihur, V., \& Korolova, A. (2014, November). Rappor: Randomized aggregatable privacy-preserving ordinal response. In Proceedings of the 2014 ACM SIGSAC conference on computer and communications security (pp. 1054-1067).

\bibitem{25McK} Florens-Zmirou, D. (1989). Approximate discrete-time schemes for statistics of diffusion processes. Statistics: A Journal of Theoretical and Applied Statistics, 20(4), 547-557.

\bibitem{35Avella} Garfinkel, S., Abowd, J. M., \& Martindale, C. (2019). Understanding database reconstruction attacks on public data. Communications of the ACM, 62(3), 46-53.

\bibitem{GenonCatalot_Jacod_1993} V. Genon-Catalot and J. Jacod. On the estimation of the diffusion coefficient for multi-dimensional diffusion processes. In Annales de l’IHP Probabilités et statistiques, volume 29,
pages 119–151, 1993.



\bibitem{Hall_Heyde} Hall, P. and Heyde, C. C. (1980). Martingale limit theory and its application. Probability and {Mathematical} {Statistics}. {New} {York} etc.: {Academic} {Press}, {A} {Subsidiary} of {Harcourt} {Brace} {Jovanovich}, {Publishers}. {XII}, 308 p.


\bibitem{29Est} Holden, A.V. (1976) Models for Stochastic Activity of Neurones, Springer-Verlag, New York. Berlin/Boston.


\bibitem{30Est} Hull, J. (2000) Options, Futures and Other Derivatives, Prentice-Hall, Englewood Cliffis, NJ.

\bibitem{KalSte24}  Kalinin N.P., Steinberger L., (2024). Efficient Estimation of a Gaussian Mean with Local Differential Privacy. Preprint \url{https://arxiv.org/abs/2402.04840}


\bibitem{42McK} Kessler, M. (1997). Estimation of an ergodic diffusion from discrete observations. Scandinavian Journal of Statistics, 24(2), 211-229.

\bibitem{Kroll} Kroll, M. (2020). Adaptive spectral density estimation by model selection under local differential privacy. arXiv preprint arXiv:2010.04218.

\bibitem{36Est} Kushner, H.J. (1967) Stochastic Stability and Control, Academic Press, New York.

\bibitem{29CLDP} Lam-Weil, J., Laurent, B., \& Loubes, J. M. (2022). Minimax optimal goodness-of-fit testing for densities and multinomials under a local differential privacy constraint. Bernoulli, 28(1), 579-600.


\bibitem{Mummy89}
Mummy, M.S. (1989).  Hermite interpolation with B-splines. Computer Aided Geometric Design, 6(2), 177-179.



\bibitem{31CLDP} Li, M., Berrett, T. B., \& Yu, Y. (2023). On robustness and local differential privacy. The Annals of Statistics, 51(2), 717-737.

\bibitem{Lyche_2017} Lyche, T. and Manni, C. and Speleers, H. B-Splines and Spline Approximation, 2017.

\bibitem{46Fisher} Narayanan, A., \& Shmatikov, V. (2008, May). Robust de-anonymization of large sparse datasets. In 2008 IEEE Symposium on Security and Privacy (sp 2008) (pp. 111-125). IEEE.

\bibitem{44Est} Papanicolaou, G. C. (1995) Diffusion in random media, Surveys in applied mathematics. 201-253.

\bibitem{Petrov} Petrov, V. V. (2012). Sums of independent random variables (Vol. 82). Springer Science \& Business Media.

\bibitem{49Martin} Polyak, B. T., \& Juditsky, A. B. (1992). Acceleration of stochastic approximation by averaging. SIAM journal on control and optimization, 30(4), 838-855.

\bibitem{23Wang} Ramanathan, M. (1999). An application of Ito's lemma in population pharmacokinetics and pharmacodynamics. Pharmaceutical research, 16(4), 584.

\bibitem{24Wang} Ramanathan, M. (1999). A method for estimating pharmacokinetic risks of concentration-dependent drug interactions from preclinical data. Drug Metabolism and disposition, 27(12), 1479-1487.

\bibitem{RevuzYor2005}  Revuz, D. and Yor, M. (2005). Continuous martingales and Brownian motion. Berlin: Springer, Grundlehr en Math. Wiss., 293. 
	

\bibitem{46Est} Ricciardi, L.M. (1977) Diffusion Processes and Related Topics in Biology, Lecture Notes in Biomathematics, Springer, New York.


\bibitem{Rosenthal1970} Rosenthal, H.P. (1970). On the subspaces of $L^p$ $(p>2)$ spanned by sequences of independent random variables. Israel J. Math. 8, 273–303. 

\bibitem{Imperial} Sharrock, L., Kantas, N., Parpas, P., \& Pavliotis, G. A. (2021). Parameter estimation for the McKean-Vlasov stochastic differential equation. arXiv preprint arXiv:2106.13751.

\bibitem{Shimizu} Shimizu, Y. (2006). M-estimation for discretely observed ergodic diffusion processes with infinitely many jumps. Statistical Inference for Stochastic Processes, 9, 179-225.

\bibitem{71Avella} Tang, J., Korolova, A., Bai, X., Wang, X., \& Wang, X. (2017). Privacy loss in apple's implementation of differential privacy on macos 10.12. arXiv preprint arXiv:1709.02753.

\bibitem{Wang} Wang, Z., Luo, J., Fu, G., Wang, Z., \& Wu, R. (2013). Stochastic modeling of systems mapping in pharmacogenomics. Advanced drug delivery reviews, 65(7), 912-917.

\bibitem{37CLDP} Warner, S. L. (1965). Randomized response: A survey technique for eliminating evasive answer bias. Journal of the American Statistical Association, 60(309), 63-69.

\bibitem{WassermanZhou10} Wasserman, L., \& Zhou, S. (2010). A statistical framework for differential privacy. Journal of the American Statistical Association, 105(489), 375-389.

\bibitem{60McK} Yoshida, N. (1992). Estimation for diffusion processes from discrete observation. Journal of Multivariate Analysis, 41(2), 220-242.


\end{thebibliography}
\end{document}